\renewcommand\section{\@startsection
	{section}
{1}
{0pt}
{-3.5ex plus -1ex minus -.2ex}
{2.3ex plus.2ex}
{\centering\normalfont\Large\scshape}}
\renewcommand\subsection{\@startsection
	{subsection}
{2}
{0pt}
{-3ex plus -1ex minus -.2ex}
{1ex plus.2ex}
{\normalfont\large\bfseries}}
\renewcommand\subsubsection{\@startsection
	{subsubsection}
{3}
{0pt}
{-1.5ex plus -1ex minus -.2ex}
{0.8ex plus .2ex}
{\normalfont\bfseries}}
\renewcommand\paragraph{\@startsection
	{paragraph}
{4}
{0em}
{-1.2ex plus -0.4ex minus -.2ex}
{0ex}
{\normalfont\bfseries }}
\newcommand{\lang}{\mathcal{L}}
\newcommand{\Z}{\mathbbm{Z}}
\newcommand{\N}{\mathbbm{N}}
\newcommand{\M}{\mathcal{M}}
\newcommand{\A}{\mathcal{A}}
\newcommand{\B}{\mathcal{B}}
\newcommand{\AZ}{\mathcal{A}^\mathbb{Z}}
\newcommand{\gls}{\widetilde\omega} 
\newcommand{\I}{\mathcal{I}} 
\newcommand{\W}{\mathcal{W}} 
\newcommand{\ie}{\textit{i}.\textit{e}.} 
\newcommand{\sg}{\sigma}    
\newcommand{\realm}{\mathcal{D}} 
\DeclareMathOperator{\poly}{poly}
\theoremstyle{plain}
\newtheorem{theorem}{Theorem}[section]
\newtheorem{proposition}[theorem]{Proposition}
\newtheorem{corollary}[theorem]{Corollary}
\newtheorem{lemma}[theorem]{Lemma}
\theoremstyle{definition}
\newtheorem*{definition}{Definition}
\theoremstyle{remark}
\newtheorem{ask}{Open problem}[section]
\newcounter{claimcount}[theorem]
\newcommand{\bprf}[1][Proof:]{\begin{list}{}    {\setlength{\leftmargin}{0.5em}
\setlength{\rightmargin}{0em}  \setlength{\listparindent}{1em}}   \item {\em
\hspace{-1em}  #1  }}
\newcommand{\eprf}{\end{list}}
\title{Arithmetical Complexity of the Language of Generic Limit Sets of Cellular Automata}
\date{}
\author[1]{Solène J. Esnay}
\author[1]{Alonso N\'u\~nez\footnote{Alonso N\'u\~nez is supported by the National Agency for Research and Development (ANID) /  Scholarship Program / DOCTORADO BECAS CHILE/2019 - 72200562.}}
\author[2]{Ilkka Törmä\footnote{Ilkka Törmä was supported by a grant from Magnus Ehrnrooth Foundation.}}
\affil[1]{Institut de Mathématiques de Toulouse\\
  Universit\'e Paul Sabatier\\
  Toulouse, France}
\affil[2]{Department of Mathematics and Statistics\\
  University of Turku\\
  Turku, Finland}
\begin{document}
\maketitle

\begin{abstract}
	The generic limit set of a dynamical system is the smallest set that attracts most of the space in a topological sense: it is the smallest closed set with a comeager basin of attraction. Introduced by Milnor, it has been studied in the context of one-dimensional cellular automata by Djenaoui and Guillon, Delacourt, and Törmä. In this article we present complexity bounds on realizations of generic limit sets of cellular automata with prescribed properties. 
	We show that generic limit sets have a $\Pi^0_2$ language if they are inclusion-minimal, a $\Sigma^0_1$ language if the cellular automaton has equicontinuous points, and that these bounds are tight. We also prove that many chain mixing $\Pi^0_2$ subshifts and all chain mixing $\Delta^0_2$ subshifts are realizable as generic limit sets. As a corollary, we characterize the minimal subshifts that occur as generic limit sets.
	
	\textbf{Keywords:} cellular automata, generic limit sets, attractors, topological dynamical systems, subshifts, arithmetical complexity
\end{abstract}

\section{Introduction}

Introduced in the 40's by Ulam and von Neumann, one-dimensional cellular automata (CA) are discrete dynamical systems where the ambient space is the set $\AZ$ of bi-infinite sequences, and the action is given by a local rule which is applied synchronously on each cell. They are both simply describable and behaviorally rich dynamical systems.

More accessible than the description of local trajectories, attractors of a dynamical system are sets that aim to capture, in some sense, its asymptotic behavior. They constitute a powerful tool to understand and describe a system. Even though the notion of attractor seems fairly intuitive, several non-equivalent definitions can be found in the literature \cite{milnor}. The first introduced and most studied of them is the limit set, made of the configurations that appear infinitely often in the system as a whole over time. Limit sets have been widely studied in the context of cellular automata \cite{ballier2011limit,kari1992nilpotency,culik1989limit}.

Other attractors exist, such as the likely limit set and the generic limit set, both introduced in \cite{milnor}: they are the smallest closed sets that attract ``most of the space'', where the notion of ``most'' is either measure-theoretical (full measure) or topological (comeager). A recent article by Djenaoui and Guillon about the generic limit sets of cellular automata \cite{djenaouiguillon} allows for their combinatorial characterization.
Most notably, generic limit sets are subshifts, another structurally rich kind of discrete dynamical systems. Understanding which of these subshifts can be realized as attractors, depending on the constraints put on the cellular automaton at their base, strengthens a very deep link between two much-studied dynamical systems.

Subsequent research on generic limit sets includes several bounds on the complexity of their language and other related decision problems.
Notably, T\"orm\"a proved in \cite{Ilkka} that the language is at most $\Sigma^0_3$ and that the bound is tight.
That article also presents constraints on the dynamical structure of generic limit sets, some of which we use in this paper.
In \cite{De21}, Delacourt proved a version of Rice's theorem for generic limit sets: all of their nontrivial properties are undecidable.
In \cite{To21}, T\"orm\"a characterized cellular automata whose generic limit set is a singleton and showed that this property is $\Sigma^0_2$-complete.
Similar complexity bounds are currently being investigated in the more general framework of dynamical systems \cite{rojassablik}.

Another notion of attractor, the $\mu$-limit set \cite{kurkamaass}, consists of the configurations made of the words that keep appearing with positive probability as time goes to infinity. Recent results \cite{mulimit,boyer2015mu} prove that different constraints on the base cellular automaton result in $\mu$-limit sets with different arithmetical complexities. In this article, we take the tools first developed in \cite{constructionfirst} and later used in \cite{mulimit,boyer2015mu} to construct $\mu$-limit sets, and adapt them to generic limit sets, as was done in \cite{De21,To21}. The main tool is the ``walls-and-counters'' construction of cellular automata with the property that almost all (both in the measure-theoretic and topological sense) initial configurations are divided into segments of finite length separated by walls, and the contents of the segments can be controlled exactly.

The adaptation process is not trivial: $\mu$-limit sets are in some sense nicer attractors than generic limit sets, because as long as an auxiliary state or pattern used as part of the construction occurs with frequency decreasing to zero in a randomly chosen trajectory, it will not be visible in the $\mu$-limit set.
By comparison, in order for the pattern to not occur in the generic limit set, it must eventually vanish and never reappear.
More precisely, every initial configuration must admit a small perturbation that does not produce the offending pattern after some finite number of time steps, even if perturbed again (by a suitable smaller amount).

In this article, we find new bounds on the complexity of the language of generic limit sets under some structural constraints. We show that the language of an inclusion-minimal generic limit set (one that does not properly contain another topological attractor) is $\Pi^0_2$, that it is $\Sigma^0_1$ if the cellular automaton has equicontinuity points, and that both bounds are tight.
We also prove that all shift-minimal generic limit sets are inclusion-minimal, so the former bound applies to them.

The tightness of the first bound follows from a more general realization result: we show that every chain mixing $\Pi^0_2$ subshift that contains a nonempty $\Pi^0_1$ subshift occurs as a generic limit set of a CA that acts as a shift on it, and the same holds for chain mixing $\Delta^0_2$ subshifts.
Such generic limit sets are inclusion-minimal by the results of \cite{Ilkka}, and thus $\Pi^0_2$-complete inclusion-minimal generic limit sets can be effectively built.
As corollaries, we prove that the chain mixing condition in the realization result cannot be weakened to chain transitive, and we characterize the generic limit sets of cellular automata among one-dimensional transitive SFTs (they are exactly the mixing ones) and minimal subshifts (they are exactly the chain mixing $\Delta^0_2$ ones).

This article is laid out as follows.
\Cref{sec:definitions} is dedicated to definitions.
\Cref{sec:lemmas} presents known and auxiliary results.
In \cref{sec:complex-obstr} we prove constraints on the complexity and structure of generic limit sets, including the aforementioned $\Pi^0_2$ and $\Sigma^0_1$ bounds.
\Cref{sec:gener-constr} presents the basic walls-and-counters construction used in \cite{mulimit,boyer2015mu}, which we adapt in \cref{sec:realizations} to prove the tightness of the $\Sigma^0_1$ bound and in \cref{sec:structure} for the realization result.
Finally, \cref{sec:future} is dedicated to some final remarks and discussion on open questions and future research.


\section{Definitions}
\label{sec:definitions}

Let $X$ be a compact metric space with metric $d$. A subset $Y \subset X$ is \emph{meager} (or \emph{of first category}) if it is the union of countably many sets whose closure has empty interior. It is \emph{comeager} (or \emph{residual}) if its complement is meager.
By the Baire Category Theorem, in our setting all comeager sets are dense in $X$.
If $U \subset X$ is open and $Y \cap U$ is comeager in the relative topology on $U$, we say $Y$ is \emph{comeager in $U$}.
We say $Y$ has the \emph{Baire property} if there is an open set $U \subset X$ such that the symmetric difference $(Y \setminus U) \cup (U \setminus Y)$ is meager.
Subsets of $X$ with the Baire property form a $\sigma$-algebra, so in particular every Borel set has the Baire property.
If $Y$ has the Baire property, then it is nonmeager if and only if it is comeager in every nonempty open $U \subset X$.
See \cite[Section 8]{Ke95} for an overview of Baire category.

For $f\colon X \rightarrow X$ a continuous function, $(X,f)$ forms a \emph{dynamical system}.
A point $x \in X$ is an \emph{equicontinuity point} if
\[
\forall y \in X, \forall \epsilon > 0, \exists \delta > 0, d(x,y) < \delta \Rightarrow \forall n \in \N, d(f^n(x),f^n(y))  < \epsilon.
\]
A \emph{morphism} $h$ between two dynamical systems $(X,f)$ and $(Y,g)$ is a continuous function with $h \circ f = g \circ h$.
If $f$ is surjective, it is called a \emph{factor map}.

The \emph{omega limit}, $\omega(x)$, of $x \in X$ is the collection of all accumulation points in $X$ of the orbit of $x$, that is $\omega(x) := \bigcap_{N\in\N}\overline{\{f^n(x) \mid n \geq N\}}$. Given a closed set $A\subset X$, we define the \emph{basin} (or \emph{realm}) \emph{of attraction} $\realm(A)$ of $A$ by $\realm(A) := \{x\in X:\omega(x)\subset A\}$.
This set has the Baire property \cite[proof of Prop. 3.12]{djenaouiguillon}.
We say $A$ is a (topological) \emph{attractor} if $\realm(A)$ is nonmeager, and a (topological) \emph{generic attractor} if $\realm(A)$ comeager. An attractor $A$ is \emph{inclusion-minimal} if $\realm(B)$ is meager for every closed set $B \subsetneq A$, or in other words, $A$ does not properly contain another attractor. The intersection of all generic attractors, that is, the smallest generic attractor, is called the \emph{generic limit set} (GLS for short) and denoted $\gls(f)$ \cite[Appendix 1]{milnor}.

For a finite set $\A$ called the \emph{alphabet}, we denote by $\A^* = \cup_{n \in \N_0} \A^n$ the set of all \emph{words} over $\A$. They are written $u = u_0 \dots u_{n-1}$ with each $u_i \in \A$, where $|u|:=n$ is the length of $u$.
We also denote $\A^{\leq n} = \bigcup_{k \leq n} \A^k$.
The \emph{full shift} $\A^\Z$ is the set of all two-way infinite \emph{configurations} over $\A$.
A word or configuration $x \in \A^* \cup \A^\Z$ is \emph{periodic with period $p \geq 1$}, if $x_i = x_{i+p}$ holds whenever both values are defined.
For $i \leq j$, we define the \emph{subword} $x_{[i,j]} = x_i x_{i+1}\dots x_{j-1} x_j$. We write $v \sqsubset x$ if $v$ is a subword of $x$.

Given a word $u\in\A^*$, we define the \emph{cylinder} $[u]_i:=\{x\in\A^\Z \mid x_{[i,i+|u|-1]}=u\}$, with $[u] := [u]_0$. Cylinders form a basis of clopen sets for the prodiscrete topology on the space $\A^\Z$, which is compact.
This topology is also induced by the metric $d(x,y) := \inf \{ 2^{-n} \mid x_{[-n,n]} = y_{[-n,n]} \}$.

The \emph{(left) shift map} $\sg : \A^\Z \to \A^\Z$ is the homeomorphism defined by $\sg(x)_i=x_{i+1}$.
This makes $(\A^\Z, \sg)$ a dynamical system.
A \emph{subshift} $X$ is a $\sg$-invariant closed subset of $\AZ$.
If $X$ does not properly contain another nonempty subshift, we say $X$ is \emph{shift-minimal} (or \emph{minimal} if no confusion arises with inclusion-minimality, since the latter property makes sense in the context of attractors only). A subshift $X$ can be described by a set $F\subset\A^*$ of \emph{forbidden words} via $X=\AZ\setminus\left(\bigcup_{u\in F}\bigcup_{i\in\Z}[u]_i\right)$. If the set $F$ can be chosen finite, then $X$ is called a \emph{subshift of finite type} (SFT).
If $F \subset \A^{\leq n}$, we say $X$ has \emph{window size} $n$.

The \emph{language} $\lang(X) = \{v \in \A^* \mid x \in X, v \sqsubset x\}$ of a subshift $X$ is the (countable) set of all subwords of configurations in $X$, and $\lang_n(X) = \lang(X) \cap \A^n$ is the set of subwords of length $n$. We say $X$ is \emph{transitive} if for all $u, v \in \lang(X)$, there exists $w \in \lang(X)$ so that $u w v \in \lang(X)$; it is \emph{mixing} if furthermore for any large enough $n \in \N$ (possibly depending on $u$ and $v$), we can find such a $w$ of length $n$. If $X$ is a mixing SFT, then there exists $n \in \N$, called its \emph{mixing distance}, that works for all $u, v$.

The \emph{SFT approximation of width $n$} of $X$ is the SFT $\mathcal{S}_n(X)$ defined by the forbidden patterns $\A^n \setminus \lang_n(X)$. We say that $X$ is \emph{chain transitive} (resp. \emph{chain mixing}) if every $\mathcal{S}_n(X)$ is transitive (resp. mixing).
See \cite[pp. 66 and 175]{Akin} for definitions in the context of general dynamical systems, equivalent to these ones in the special case of subshifts. Alternatively, see \cite[Corollary 4]{Ka08}: being chain transitive (resp. chain mixing) is being transitive (resp. mixing) for any pair of words \emph{of the same length}.

A (one-dimensional) \emph{cellular automaton} (CA) is a pair $(\A,f)$ where $\A$ is an alphabet and $F\colon \A^{2r+1} \rightarrow \A$ is a \emph{local rule} of \emph{radius} $r \in \N$.
It defines a \emph{global rule} $f \colon \A^\Z \rightarrow \A^\Z$ by $f(x)_i = F(x_{i-r}, x_{i-r+1}, \ldots, x_{i+r})$ for all $x \in \A^\Z$ and $i \in \Z$. Alternatively, $f$ is an endomorphism of the dynamical system $(\A^\Z, \sigma)$. With this, $(\A^\Z,f)$ forms a dynamical system.

A word $b \in \A^{2k+1}$ is a \emph{blocking word} for the CA $f$ of radius $r$ if there exists a sequence of words $v_n \in \A^r$ such that for any $x \in [b]_{-k}$, we have $f^n(x) \in [v_n]$ for all $n \in \N$.
Note how an occurrence of a blocking word in a configuration completely disconnects coordinates to its right and left for the action of the automaton, and that $b$ can be a blocking word without any $v_n$ being one. 
By \cite[Prop. 2.1]{equicontinuity}, a one-dimensional CA admits an equicontinuity point if and only if it admits a blocking word, and furthermore, the sequence $(v_n)_{n \in \N}$ is then eventually periodic.

A \emph{Turing Machine} is a 5-tuple $(Q, \Gamma, q_i, q_f, \delta)$, where $Q$ is a finite set of states with initial state $q_i$ and final state $q_f$, $\Gamma$ is an alphabet and $\delta$ is a transition rule: the machine starts in state $q_i$ and is represented by a read/write head on an infinite discrete tape. It reads letters written with $\Gamma$ on the tape, starting on a given (possibly empty) input in $\Gamma^*$, and overwrites them according to the transition rule $\delta$ and its current state $q \in Q$. If the machine reaches the state $q_f$, it \emph{halts} and outputs the content of its tape.

A predicate over $\N$ describable by a Turing Machine -- that is, that can be the output of a Turing Machine starting on the empty input -- is \emph{computable}, also denoted as $\Sigma_0^0$ and $\Pi_0^0$. If $\phi$ is $\Pi_n^0$ then we say that $\exists k_1 \dots \exists k_m \phi$ is a $\Sigma_{n+1}^0$ predicate, and if $\phi$ is $\Sigma_n^0$ then $\forall k_1 \dots \forall k_m \phi$ is $\Pi_{n+1}^0$. We also define $\Delta_n^0=\Pi_n^0 \cap \Sigma_n^0$. We extend the notations $\Pi_n^0, \Sigma_n^0$ and $\Delta_n^0$ to countable sets described by such predicates: this is called the \emph{arithmetical hierarchy}.

Given two $\Sigma_n^0$ (resp. $\Pi_n^0$) sets $A$ and $B$, we say that $B$ can be \emph{reduced} to $A$ if provided with an enumeration of $A$, we can enumerate $B$: the cost of algorithmically describing $B$ is at most the cost of describing $A$. 
A set is \emph{$\Sigma_n$-hard} if any $\Sigma_n$ set can be reduced to $A$; it is \emph{$\Sigma_n$-complete} if it is $\Sigma_n$ and $\Sigma_n$-hard. Similar definitions hold for $\Pi_n$ and $\Delta_n$. 
The language of a subshift, being countable, can be given a classification in the arithmetical hierarchy; for simplicity, we say the subshift itself has that classification.

\section{Preliminary results}
\label{sec:lemmas}

In this section we present known and auxiliary results on the generic limit sets of cellular automata.
Some of them may hold in greater generality, but we state them only within our context, for simplicity.

\begin{lemma}[Prop. 4.11 \cite{djenaouiguillon}]
  \label{lemma1}
  Let $f$ be a CA. Then the generic limit set $\gls(f)$ is a nonempty $f$-invariant subshift.
\end{lemma}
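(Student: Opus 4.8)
The plan is to prove three distinct properties of $\gls(f)$: that it is nonempty, that it is a subshift (closed and $\sigma$-invariant), and that it is $f$-invariant. Each follows from the definition of the generic limit set as the smallest generic attractor, combined with the Baire category structure of the space $\A^\Z$.

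\textbf{Nonemptiness and closedness.}
First I would recall that $\gls(f)$ is defined as the intersection of all generic attractors, equivalently the smallest closed set $A$ whose realm $\realm(A)$ is comeager. Since $\A^\Z$ is itself a generic attractor (its realm is the whole space), the collection of generic attractors is nonempty, and $\gls(f)$ is an intersection of closed sets, hence closed. For nonemptiness, the key observation is that each individual orbit closure $\omega(x)$ is nonempty (the orbit of any point in a compact space has an accumulation point), so no point lies in the realm of the empty set; thus $\realm(\emptyset)$ is empty, hence meager, and the empty set is \emph{not} a generic attractor. The main content is to verify that the intersection of all generic attractors is itself a generic attractor, i.e. that the property of having comeager realm is preserved under this (possibly uncountable) intersection. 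Here I would use that the family is closed under countable intersection (a countable intersection of comeager sets is comeager, and $\realm(\bigcap_n A_n) \supseteq \bigcap_n \realm(A_n)$), together with a compactness or second-countability argument to reduce the full intersection to a countable subfamily realizing the same closed set.

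\textbf{Shift-invariance.}
To show $\gls(f)$ is $\sigma$-invariant, I would exploit that the shift $\sigma$ commutes with $f$ (as $f$ is a CA) and is a homeomorphism of $\A^\Z$ preserving the category structure. Since $\sigma$ is a homeomorphism, it maps meager sets to meager sets and comeager sets to comeager sets, and since $\sigma f = f \sigma$ we get $\omega(\sigma x) = \sigma(\omega(x))$, whence $\realm(\sigma A) = \sigma(\realm(A))$. Therefore $\sigma(\gls(f))$ is again a generic attractor of the same dynamics; by minimality of $\gls(f)$ we obtain $\gls(f) \subseteq \sigma(\gls(f))$, and applying the same to $\sigma^{-1}$ gives equality. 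A closed $\sigma$-invariant set is a subshift, which completes that part.

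\textbf{$f$-invariance.}
Finally, for $f$-invariance I would argue analogously: $f$ is continuous and $\omega(f(x)) = \omega(x)$ for every $x$ (the tail of the orbit of $f(x)$ coincides with a shifted tail of the orbit of $x$), so $\realm(A) \subseteq \realm(A)$ behaves compatibly under the map, and one checks $f(\gls(f)) \subseteq \gls(f)$ from minimality together with the fact that $f(\gls(f))$ is closed (continuous image of a compact set). The main obstacle I anticipate is the reduction in the nonemptiness/closedness step: verifying that the uncountable intersection defining $\gls(f)$ genuinely has a comeager realm and is attained, rather than being a strict intersection whose realm fails to be comeager. I expect this to be handled by second-countability of $\A^\Z$, which forces the relevant intersection to be realized by countably many generic attractors, after which countable-intersection stability of comeagerness finishes the argument.
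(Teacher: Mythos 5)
The paper does not actually prove this lemma: it is quoted verbatim from Proposition 4.11 of Djenaoui and Guillon \cite{djenaouiguillon}, so there is no in-paper argument to compare yours against, and I judge your proposal on its own. Most of it is sound. The reduction of the uncountable intersection of generic attractors to a countable one via second countability (the complement of the intersection is a union of open sets, which by the Lindel\"of property admits a countable subcover), combined with stability of comeagerness under countable intersections and the inclusion $\bigcap_n \realm(A_n) \subseteq \realm(\bigcap_n A_n)$, correctly shows that $\gls(f)$ is itself a generic attractor, hence closed and nonempty (since $\realm(\emptyset) = \emptyset$ is not comeager in the Baire space $\A^\Z$). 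The shift-invariance argument via $\realm(\sigma(A)) = \sigma(\realm(A))$ and minimality applied to both $\sigma$ and $\sigma^{-1}$ is also correct.

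The one genuine slip is in the $f$-invariance step, where the direction of the minimality argument is reversed. Minimality says $\gls(f) \subseteq B$ for every generic attractor $B$; so showing that $f(\gls(f))$ is a closed generic attractor would yield $\gls(f) \subseteq f(\gls(f))$ --- the opposite of the inclusion $f(\gls(f)) \subseteq \gls(f)$ you claim to extract from it. (That forward inclusion is true and worth proving, but it rests on $\omega(x) \subseteq f(\omega(x))$, which requires compactness, to conclude $\realm(f(A)) \supseteq \realm(A)$.) To obtain $f(\gls(f)) \subseteq \gls(f)$, apply minimality instead to the closed set $\gls(f) \cap f^{-1}(\gls(f))$: since $f(\omega(x)) \subseteq \omega(x)$ for every $x$ (no compactness needed), any $x$ with $\omega(x) \subseteq \gls(f)$ also has $\omega(x) \subseteq f^{-1}(\gls(f))$, so this set has comeager realm and by minimality contains $\gls(f)$, giving $\gls(f) \subseteq f^{-1}(\gls(f))$. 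The two inclusions together give $f(\gls(f)) = \gls(f)$. Note also that the identity you invoke, $\omega(f(x)) = \omega(x)$, is not quite the one needed; the relevant fact is $f(\omega(x)) = \omega(x)$ (with only the inclusion $f(\omega(x)) \subseteq \omega(x)$ holding for free).
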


\begin{lemma}[Lemma 2 in \cite{Ilkka}]
  \label{lemma2}
  Let $f$ be a CA on $\A^\Z$. A word $s \in \A^*$ occurs in $\gls(f)$ if and only if there exists a word $v \in \A^*$ and $i \in \Z$ such that for all $u, w \in \A^*$ there exist infinitely many $t \in \N$ with $f^t([uvw]_{i-|u|}) \cap [s] \neq \emptyset$.
  
  We say that $v$ \emph{enables} $s$ for $f$.
\end{lemma}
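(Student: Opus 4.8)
The plan is to reduce the combinatorial condition to a statement about Baire category, and then to connect that statement to the generic limit set through its defining property as the smallest generic attractor. Throughout, write $[s] := [s]_0$ and consider
\[
B_s := \{x \in \AZ : \omega(x) \cap [s] \neq \emptyset\} = \{x \in \AZ : f^t(x) \in [s] \text{ for infinitely many } t\},
\]
the two descriptions agreeing because $[s]$ is clopen. Writing $B_s = \bigcap_{N} \bigcup_{t \geq N} f^{-t}([s])$ exhibits it as a countable intersection of open sets, so $B_s$ is $G_\delta$ and in particular has the Baire property. Since $\gls(f)$ is shift-invariant (\cref{lemma1}), $s$ occurs in $\gls(f)$ if and only if $\gls(f) \cap [s] \neq \emptyset$, so it suffices to characterize when the latter holds.

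First I would show that the condition for a fixed pair $(v,i)$, namely that for all $u,w$ there are infinitely many $t$ with $f^t([uvw]_{i-\abs{u}}) \cap [s] \neq \emptyset$, is equivalent to $B_s$ being comeager in the cylinder $[v]_i$. For each $N$ the set $G_N := \bigcup_{t\ge N} f^{-t}([s])$ is open, hence comeager in $[v]_i$ if and only if it is dense there; and since the complement of $B_s$ in $[v]_i$ is the countable union of the complements of the $G_N$, the set $B_s$ is comeager in $[v]_i$ exactly when every $G_N$ is. As the cylinders $[uvw]_{i-\abs{u}}$ (for $u,w \in \A^*$) form a basis of the relative topology on $[v]_i$, density of $G_N$ in $[v]_i$ means precisely that for all $u,w$ there is some $t \geq N$ with $f^{-t}([s]) \cap [uvw]_{i-\abs{u}} \neq \emptyset$, i.e. $f^t([uvw]_{i-\abs{u}}) \cap [s] \neq \emptyset$. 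Quantifying over $N$ converts this into the ``infinitely many $t$'' clause. Existentially quantifying over $v$ and $i$, and using that a set with the Baire property is nonmeager if and only if it is comeager in some cylinder, the full condition of the statement becomes simply: $B_s$ is nonmeager.

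It then remains to prove that $\gls(f) \cap [s] \neq \emptyset$ if and only if $B_s$ is nonmeager, which I would handle by contrapositive in both directions through the complementary clopen set $[s]^c$, noting $\realm([s]^c) = \{x : \omega(x) \cap [s] = \emptyset\} = \AZ \setminus B_s$. If $B_s$ is meager then $\realm([s]^c)$ is comeager, so $[s]^c$ is a generic attractor; being the smallest generic attractor, $\gls(f)$ is contained in $[s]^c$, whence $\gls(f) \cap [s] = \emptyset$. Conversely, if $\gls(f) \cap [s] = \emptyset$ then $\gls(f) \subseteq [s]^c$, so $\realm(\gls(f)) \subseteq \realm([s]^c) = \AZ \setminus B_s$, and since $\realm(\gls(f))$ is comeager ($\gls(f)$ being a generic attractor) we conclude $B_s$ is meager. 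Chaining the two reductions yields the lemma. I expect the main obstacle to be the category-theoretic translation of the middle step: one must argue carefully that ``comeager in $[v]_i$'' decomposes through the open approximants $G_N$ and that the density of each $G_N$ matches exactly the universal-over-$(u,w)$, infinitely-many-$t$ phrasing, keeping the quantifier alternation aligned. The reductions at the two ends are then short consequences of the Baire property of $B_s$ and of $\gls(f)$ being the smallest generic attractor.
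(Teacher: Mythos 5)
Your argument is correct. Note that the paper does not prove this lemma itself --- it is imported verbatim from \cite{Ilkka} --- so there is no in-paper proof to compare against; but your route (identify the condition on a fixed $[v]_i$ with $B_s = \bigcap_N \bigcup_{t \geq N} f^{-t}([s])$ being comeager in $[v]_i$ via density of the open approximants, observe that the cylinders $[uvw]_{i-|u|}$ are exactly the basic open subsets of $[v]_i$, localize using the Baire property, and finally trade ``$B_s$ nonmeager'' for ``$\gls(f) \cap [s] \neq \emptyset$'' through the clopen set $[s]^c$ and the definition of $\gls(f)$ as the smallest generic attractor) is essentially the standard proof and is the same Baire-category machinery this paper deploys in its own proof of \cref{lemma4}. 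All the delicate points --- the equivalence of the two descriptions of $B_s$ via clopenness of $[s]$, the quantifier bookkeeping between ``for all $N$ there is $t \geq N$'' and ``infinitely many $t$'', and the monotonicity of $\realm(\cdot)$ --- are handled correctly.
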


\begin{lemma}[Lemma 3 in \cite{Ilkka}]
  \label{lemma3}
  Let $f$ be a CA on $\A^\Z$, let $n \in \N$ and let $[v]_i \subset \A^\Z$ be a cylinder set. Then there exists a cylinder set $[w]_j \subset [v]_i$ and $T \in \N$ such that for all $t \geq T$ we have $f^t([w]_j) \subset [\mathcal{L}_n(\gls(f))]$.
  
  We say that $w$ is \emph{$\gls(f)$-forcing}.
\end{lemma}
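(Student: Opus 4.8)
The statement unwinds as follows: writing $[\mathcal{L}_n(\gls(f))] = \bigcup_{u \in \mathcal{L}_n(\gls(f))} [u]$, the conclusion $f^t([w]_j) \subseteq [\mathcal{L}_n(\gls(f))]$ asserts that every $x \in [w]_j$ has its central length-$n$ window $(f^t(x))_{[0,n-1]}$ inside the GLS language, for all $t \geq T$. The plan is to extract this uniform behaviour from the comeagerness of the realm $\realm(\gls(f))$ by a Baire category (``stabilization'') argument relativised to $[v]_i$.

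First I would record the pointwise version. Since $\gls(f)$ is a generic attractor, $\realm(\gls(f)) = \{x : \omega(x) \subseteq \gls(f)\}$ is comeager. I claim every $x$ in this realm eventually has a good window: there is $T_x$ with $(f^t(x))_{[0,n-1]} \in \mathcal{L}_n(\gls(f))$ for all $t \geq T_x$. Indeed, if infinitely many times $t_k$ violated this, compactness of $\A^\Z$ would let me pass to a subsequence along which $f^{t_k}(x)$ converges to some $y \in \omega(x)$; because ``having a central window outside the finite set $\mathcal{L}_n(\gls(f))$'' is a clopen condition, $y$ would also carry a forbidden window, contradicting $y \in \omega(x) \subseteq \gls(f)$.

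Second, I would turn this pointwise statement into a uniform one. For each $T \in \N$ set $A_T = \bigcap_{t \geq T}\{x : (f^t(x))_{[0,n-1]} \in \mathcal{L}_n(\gls(f))\}$; each defining set is clopen (as $f^t$ is continuous and the window condition is clopen), so $A_T$ is closed, and $\bigcup_T A_T \supseteq \realm(\gls(f))$ is comeager by the previous step. Since a comeager set is comeager in every nonempty open subset, $\bigcup_T (A_T \cap [v]_i)$ is comeager — hence nonmeager — in the clopen, thus Baire, space $[v]_i$. A countable union of meager sets is meager, so some $A_{T_0} \cap [v]_i$ must be nonmeager; being closed and nonmeager in a Baire space, it has nonempty interior, which therefore contains a basic clopen set, i.e. a cylinder $[w]_j \subseteq [v]_i$ with $[w]_j \subseteq A_{T_0}$. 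Unwinding the definition of $A_{T_0}$ yields exactly $f^t([w]_j) \subseteq [\mathcal{L}_n(\gls(f))]$ for all $t \geq T := T_0$.

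The genuinely delicate points are the two uses of finiteness and compactness: that $\mathcal{L}_n(\gls(f))$ is finite makes the ``bad window'' condition clopen, so that it survives taking limits in the first step and keeps each $A_T$ closed in the second; and that everything is relativised to $[v]_i$ rather than to the whole space, so that the cylinder produced by Baire's theorem is automatically contained in $[v]_i$. Neither is difficult, but both are essential — dropping the relativisation would only place $[w]_j$ somewhere in $\A^\Z$, not inside the prescribed $[v]_i$.
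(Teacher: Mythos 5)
Your argument is correct and is essentially the standard proof of this lemma: the paper itself only cites it from \cite{Ilkka}, and the proof given there is exactly your Baire-category argument --- cover the comeager realm $\realm(\gls(f))$ by the countably many closed sets $A_T = \bigcap_{t \geq T} f^{-t}([\mathcal{L}_n(\gls(f))])$, relativize to $[v]_i$, and extract a cylinder from the nonempty interior of some nonmeager $A_{T_0} \cap [v]_i$. Both delicate points you flag (finiteness of $\mathcal{L}_n(\gls(f))$ making the window condition clopen, and relativizing to $[v]_i$) are handled correctly.
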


\begin{lemma}
  \label{lemma4}
  Let $f$ be a CA on $\A^\Z$ with generic limit set $\gls(f)$.
  The following conditions are equivalent:
  \begin{enumerate}
  \item
    $\gls(f)$ is inclusion-minimal.
  \item
    \label{it:E}
    For all $s \in \lang(\gls(f))$, $v \in \A^*$ and $i \in \Z$, there are infinitely many $t \in \N$ with $f^t([v]_{i}) \cap [s] \neq \emptyset$.
  \end{enumerate}
\end{lemma}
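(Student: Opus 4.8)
The plan is to prove the two implications separately, handling $(1)\Rightarrow(2)$ by contraposition and $(2)\Rightarrow(1)$ by contradiction. Throughout I rely on two standing observations. First, since $\realm(B)$ has the Baire property, if it is nonmeager then it is comeager in some cylinder $[v]_i$ (a nonmeager set with the Baire property is comeager in some nonempty open set, and cylinders form a basis). Second, condition \ref{it:E} is insensitive to the position of the target word: because $f$ commutes with $\sigma$, one checks that $f^t([v]_i)\cap[s]_j\neq\emptyset$ iff $f^t([v]_{i-j})\cap[s]_0\neq\emptyset$, so \ref{it:E} is equivalent to the statement that for all $s\in\lang(\gls(f))$, all $j\in\Z$, and all cylinders $[v]_i$, there are infinitely many $t$ with $f^t([v]_i)\cap[s]_j\neq\emptyset$.

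For $(1)\Rightarrow(2)$, I argue contrapositively. Suppose \ref{it:E} fails, so there are $s\in\lang(\gls(f))$ and a cylinder $[v]_i$ with $f^t([v]_i)\cap[s]=\emptyset$ for all $t$ past some $T$. I set $B:=\gls(f)\setminus[s]$, which is closed, and a proper subset of $\gls(f)$ since $s\in\lang(\gls(f))$ forces $\gls(f)\cap[s]\neq\emptyset$. For every $x\in[v]_i$ the orbit eventually avoids the clopen set $[s]$, so $\omega(x)\cap[s]=\emptyset$; if moreover $\omega(x)\subseteq\gls(f)$, then $\omega(x)\subseteq B$, that is, $x\in\realm(B)$. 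Because $\gls(f)$ is a generic attractor, $\realm(\gls(f))$ is comeager, hence $[v]_i\cap\realm(\gls(f))$ is comeager in the open set $[v]_i$; as it is contained in $\realm(B)$, the latter is nonmeager. Thus $B$ is a closed proper subset of $\gls(f)$ with nonmeager realm, so $\gls(f)$ is not inclusion-minimal.

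For $(2)\Rightarrow(1)$, assume \ref{it:E} and suppose for contradiction that some closed $B\subsetneq\gls(f)$ has nonmeager realm. By the first observation, $\realm(B)$ is comeager in some cylinder $[v]_i$. Choose $y\in\gls(f)\setminus B$; since $B$ is closed there is a cylinder $[u]_k\ni y$ with $[u]_k\cap B=\emptyset$, and $u\in\lang(\gls(f))$ since $y\in\gls(f)$. For each $x\in\realm(B)$ we have $\omega(x)\subseteq B\subseteq\A^\Z\setminus[u]_k$; as the orbit can accumulate only off the clopen set $[u]_k$, there is some $N$ with $f^t(x)\notin[u]_k$ for all $t\geq N$. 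Hence $\realm(B)\cap[v]_i\subseteq\bigcup_N C_N$, where $C_N:=\{x\in[v]_i:\forall t\geq N,\ f^t(x)\notin[u]_k\}$ is closed (an intersection of clopen sets). I claim each $C_N$ is nowhere dense in $[v]_i$: otherwise $C_N$ would contain a cylinder $[w]_l\subseteq[v]_i$, giving $f^t([w]_l)\cap[u]_k=\emptyset$ for all $t\geq N$, which contradicts the repositioned form of \ref{it:E} applied to $u\in\lang(\gls(f))$. Therefore $\realm(B)\cap[v]_i$ is meager, contradicting its comeagerness in $[v]_i$, and $\gls(f)$ is inclusion-minimal.

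I expect the topological bookkeeping to be routine, and the real work to lie in the Baire-category step of $(2)\Rightarrow(1)$: localizing comeagerness of $\realm(B)$ to a single cylinder and decomposing $\realm(B)\cap[v]_i$ into countably many closed sets whose nowhere-density is precisely what \ref{it:E} forbids. The other point requiring care is the appeal, in the first implication, to the fact that $\realm(\gls(f))$ is comeager, which is what guarantees that the constructed $B$ attracts a comeager portion of $[v]_i$.
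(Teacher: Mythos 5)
Your proof is correct and follows essentially the same route as the paper's: the direction $(1)\Rightarrow(2)$ (via $K=\gls(f)\setminus[s]$ and $\realm(\gls(f))\cap[v]_i$) is identical, and your $(2)\Rightarrow(1)$ is the dual, localized form of the paper's argument --- where the paper shows each $\bigcup_{t\geq T}f^{-t}([s]_j)$ is open dense and intersects them into the comeager set $\{x \mid \gls(f)\subset\omega(x)\}$, you cover $\realm(B)\cap[v]_i$ by the complementary closed sets $C_N$ and show each is nowhere dense, which is the same Baire-category content. Both directions check out, including the compactness step turning $\omega(x)\cap[u]_k=\emptyset$ into eventual avoidance of $[u]_k$ and the repositioning of the target cylinder via shift-commutation.
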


Note that if $s \notin \lang(\gls(f))$, then the $v$ and $i$ described in \cref{it:E} cannot exist due to \cref{lemma2}.
The lemma characterizes the situation in which all choices of $v$ and $i$ are valid whenever one is.
An equivalent formulation of \cref{it:E} is that $\bigcup_{t \geq T} f^{-t}([s])$ is dense in $\A^\Z$ for all $s \in \lang(\gls(f))$ and $T \in \N$.

\begin{proof}
  Suppose that \cref{it:E} holds.
  For each word $s \in \lang(\gls(f))$ and $n \in \N$, the set $\bigcup_{t \geq T} f^{-t}([s]_n) = \sigma^{-n}(\bigcup_{t \geq T} f^{-t}([s]))$ is open, and dense by assumption, hence the intersection $B(s,n) = \bigcap_{T \in \N} \bigcup_{t \geq T} f^{-t}([s]_n)$ is comeager.
  Then $B = \bigcap_{s \in \lang(\gls(f))} \bigcap_{n \in \N} B(s,n) = \{ x \in \A^\Z \mid \gls(f) \subset \omega(x) \}$ is comeager as well, since any language is countable as a subset of $\A^*$.
  Consider any closed set $K \subset \A^\Z$.
  If there exists $x \in \realm(K) \cap B$, then $\gls(f) \subset \omega(x) \subset K$, so $K$ is not a proper subset of $\gls(f)$.
  Otherwise $\realm(K) \subset \A^\Z \setminus B$ is meager.
  This means $\gls(f)$ is inclusion-minimal.
  
  Suppose then that \cref{it:E} does not hold: there exist $s \in \lang(\gls(f))$, $v \in \A^*$, $i \in \Z$ and $T \in \N$ such that $f^t([v]_i)$ does not intersect $[s]$ for any $t \geq T$.
  Let $K = \gls(f) \setminus [s]$, a closed proper subset of $\gls(f)$.
  Then the realm $\realm(K)$ contains $\realm(\gls(f)) \cap [v]_i$, which is nonmeager as the intersection of a comeager set and an open set.
  Hence $\gls(f)$ is not inclusion-minimal.
\end{proof}

Say that a CA $f\colon \A^\Z \to \A^\Z$ is \emph{eventually oblique} on an $f$-invariant subshift $X \subset \A^\Z$ if there exists $n \in \N$ such that $f^n|_X$ admits a neighborhood that is contained in either $(-\infty, -1]$ or $[1, \infty)$.
The proof of~\cite[Proposition~4]{Ilkka} shows that if $f$ is eventually oblique on $\gls(f)$, then it satisfies \cref{it:E} of \cref{lemma4}.
Hence we have the following.

\begin{corollary}
  \label{cor:ShiftGivesMinimal}
  If a CA $f\colon \A^\Z \to \A^\Z$ is eventually oblique on $\gls(f)$, then $\gls(f)$ is inclusion-minimal.
  In particular, this holds if the restriction of $f$ to $\gls(f)$ is a nontrivial shift map.
\end{corollary}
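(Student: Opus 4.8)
The plan is to reduce the statement to the equivalence furnished by \cref{lemma4} together with the fact, recorded just before the corollary, that eventual obliqueness of $f$ on $\gls(f)$ implies condition \cref{it:E}. For the first assertion there is then essentially nothing left to do: if $f$ is eventually oblique on $\gls(f)$, then by the cited consequence of the proof of \cite[Proposition 4]{Ilkka} condition \cref{it:E} of \cref{lemma4} holds, and \cref{lemma4} immediately yields that $\gls(f)$ is inclusion-minimal. So the only genuine work is in the ``in particular'' clause, where I must check that a nontrivial shift restriction is a special case of eventual obliqueness.

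For the ``in particular'' clause I would argue directly. Suppose $f|_{\gls(f)} = \sigma^k$ for some integer $k \neq 0$; this is what ``nontrivial shift map'' means. Since $\sigma^k(x)_0 = x_k$, the map $f|_{\gls(f)}$ admits the singleton neighborhood $\{k\}$, as its value at coordinate $0$ depends only on the input coordinate $k$. If $k \geq 1$ then $\{k\} \subset [1,\infty)$, and if $k \leq -1$ then $\{k\} \subset (-\infty,-1]$. In either case, taking $n = 1$ in the definition of eventual obliqueness shows that $f$ is eventually oblique on $\gls(f)$, so the first part of the corollary applies. The hypothesis $k \neq 0$ is exactly what excludes the forbidden value $0$ from the neighborhood.

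I expect the only subtlety to be purely definitional: confirming that ``admits a neighborhood contained in $[1,\infty)$ or $(-\infty,-1]$'' is matched by the minimal neighborhood $\{k\}$ of $\sigma^k$, and that restricting to the subshift $\gls(f)$ causes no difficulty (one simply reuses the ambient local rule of $\sigma^k$). No serious obstacle arises here, since the heavy lifting --- converting the one-directional information flow of an oblique map into the density statement \cref{it:E} --- has already been done in \cite{Ilkka} and is quoted as a black box. Were that input unavailable, the main difficulty would lie precisely there: one would have to combine the enabling word of \cref{lemma2} (or a forcing cylinder from \cref{lemma3}) with the fact that $f^n$ moves information strictly to one side, so that iterating $f$ transports any target word $s \in \lang(\gls(f))$ into the window $[s]$ from an arbitrary starting cylinder $[v]_i$, producing the required infinitely many return times.
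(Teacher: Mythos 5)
Your proposal is correct and follows exactly the paper's intended route: the remark preceding the corollary supplies condition \cref{it:E} from eventual obliqueness via \cite[Proposition~4]{Ilkka}, \cref{lemma4} then gives inclusion-minimality, and the ``in particular'' clause is the observation that $\sigma^k$ with $k\neq 0$ has the singleton neighborhood $\{k\}\subset[1,\infty)$ or $(-\infty,-1]$. Nothing is missing.
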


As an aside, we show that even though the generic limit set of a CA might properly contain closed sets with nonmeager realms, these sets cannot be subshifts.
In fact, we can characterize the generic limit set as the smallest subshift with a nonmeager realm.

\begin{proposition}
  Let $f$ be a CA on $\A^\Z$ and $X \subset \A^\Z$ a subshift.
  If $\realm(X)$ is nonmeager, then $\gls(f) \subset X$.
\end{proposition}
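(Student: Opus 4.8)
The plan is to show that the subshift $X$ is itself a \emph{generic} attractor, i.e.\ that its realm $\realm(X)$ is comeager; since $\gls(f)$ is by definition the smallest generic attractor, this immediately yields $\gls(f) \subset X$. We already know from \cite[proof of Prop. 3.12]{djenaouiguillon} that $\realm(X)$ has the Baire property, so the entire content is to upgrade ``nonmeager'' to ``comeager''. I would obtain this upgrade through a topological zero--one law for the shift action on $\AZ$.

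First I would record that $\realm(X)$ is $\sg$-invariant. Since a CA commutes with the shift, $f^n \circ \sg = \sg \circ f^n$ for every $n$, and since $\sg$ is a homeomorphism it commutes with closures; hence $\omega(\sg x) = \sg(\omega(x))$ for all $x \in \AZ$. As $X$ is a subshift we have $\sg^{-1}(X) = X$, so $\omega(\sg x) \subset X$ if and only if $\omega(x) \subset X$, which is exactly $\sg^{-1}(\realm(X)) = \realm(X)$.

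Next I would invoke the topological zero--one law: a $\sg$-invariant set with the Baire property in $\AZ$ is either meager or comeager. This rests only on $\sg$ being a topologically transitive homeomorphism of the (compact, hence Baire) space $\AZ$, which is clear since, given cylinders $[u]_i$ and $[w]_j$, one can place $u$ and $w$ far apart to find $n$ with $\sg^{-n}([u]_i) \cap [w]_j \neq \emptyset$. The argument itself: if $\realm(X)$ is nonmeager then, having the Baire property, it is comeager in some nonempty open $U$; if its complement were also nonmeager it would be comeager in some nonempty open $V$; transitivity supplies $n$ with $W := \sg^{-n}(U) \cap V \neq \emptyset$. Applying the homeomorphism $\sg^{-n}$ to the meager set $U \setminus \realm(X)$ and using invariance shows $\realm(X)$ is comeager in $\sg^{-n}(U)$, hence $W \setminus \realm(X)$ is meager; while comeagerness of the complement in $V$ makes $W \cap \realm(X)$ meager. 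Then $W$ itself is meager, contradicting that it is a nonempty open subset of a Baire space.

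Applying this with $\realm(X)$ nonmeager forces $\realm(X)$ to be comeager, so $X$ is a generic attractor and therefore contains the smallest one, $\gls(f) \subset X$. I expect the only genuine obstacles to be the invariance computation and the zero--one law; once these are established, the conclusion is immediate from the minimality built into the definition of $\gls(f)$. The sole point requiring care is the degenerate alphabet $|\A| = 1$, where $\AZ$ is a single point and the statement holds trivially, so transitivity need not be argued there.
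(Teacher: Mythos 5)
Your proof is correct, but it takes a genuinely different route from the paper's. The paper argues by contradiction at the level of individual words: it picks $s \in \lang(\gls(f)) \setminus \lang(X)$, uses the enabling characterization (\cref{lemma2}) to produce a cylinder $[v]_i$ in which the set of configurations whose orbit hits $[s]$ infinitely often is comeager, and then uses the $\sg$-invariance of $\realm(X)$ only to translate the open set where $\realm(X)$ is comeager until it meets $[v]_i$; the nonempty intersection yields a point $x$ with $\omega(x) \subset X$ and $\omega(x) \cap [s] \neq \emptyset$, a contradiction. You instead exploit the same two ingredients --- the Baire property of $\realm(X)$ and its $\sg$-invariance --- to run the topological zero--one law for the (topologically transitive) shift action, upgrading ``nonmeager'' to ``comeager'' and concluding directly from the definition of $\gls(f)$ as the smallest closed set with comeager realm. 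Your invariance computation $\omega(\sg x) = \sg(\omega(x))$ and your self-contained proof of the dichotomy are both sound, and the degenerate one-point case is harmless (transitivity holds vacuously there anyway). What your approach buys is a cleaner, more conceptual argument that never touches the combinatorics of \cref{lemma2}, together with a slightly stronger byproduct: for a subshift, a nonmeager realm is automatically comeager, so the generic limit set is the smallest subshift whose realm is nonmeager \emph{because} every such subshift is in fact a generic attractor. What the paper's word-level argument buys is uniformity with the techniques used throughout \cref{sec:lemmas} (it reuses the machinery of \cref{lemma2,lemma4}), at the cost of being less transparent about the underlying zero--one phenomenon.
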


\begin{proof}
  Suppose for a contradiction that $\gls(f) \setminus X \neq \emptyset$.
  Then there exists a word $s \in \lang(\gls(f)) \setminus \lang(X)$.
  Let $v \in \A^*$ and $i \in \Z$ be given by \cref{lemma2} applied to $s$.
  Similarly to the proof of \cref{lemma4}, the set $B(s) = \bigcap_{T \in \N} \bigcup_{t \geq T} f^{-t}([s])$ is comeager in $[v]_{i}$.

  As $X$ is closed, its realm $\realm(X)$ has the Baire property, and since $\realm(X)$ is by assumption nonmeager, it is comeager in some nonempty open set $U \subset \A^\Z$.
  Moreover, since the shift map $\sigma$ is a homeomorphism and commutes with $f$, for any $x \in \realm(X)$, $\sigma(x)$ is so that $\omega(\sigma(x)) = \sigma(\omega(x)) \subset X$. As such, $\realm(X)$ is stable by $\sigma$. Up to considering some $\sigma^k(U), k \in \Z$ in which $\realm(X)$ is also comeager instead of $U$, we can assume that $V = [v]_{i} \cap U \neq \emptyset$.
  Hence $\realm(X) \cap B(s)$ is comeager in $V$, in particular nonempty.
  Any configuration $x$ in this set satisfies $\omega(x) \subset X$ and $\omega(x) \cap [s] \neq \emptyset$, so $X$ intersects $[s]$.
  This contradicts $s \notin \lang(X)$.
\end{proof}

%
%

\section{Obstructions}
\label{sec:complex-obstr}

Several bounds in complexity for generic limit sets were already known from \cite{Ilkka}; we mention the following, to give some perspective to the next results:

\begin{proposition}[\cite{Ilkka}, Th. 1]
	The language of the generic limit set of any CA is $\Sigma_3^0$, and there exists a CA with a $\Sigma_3^0$-complete GLS, making the complexity bound tight.
\end{proposition}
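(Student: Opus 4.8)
The plan is to treat the upper bound and the tightness by entirely separate means.

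For the upper bound, I would read the complexity directly off the characterization in \cref{lemma2}. Given finite data $v, i, u, w, s$ and $t$, the predicate $f^t([uvw]_{i-|u|}) \cap [s] \neq \emptyset$ is decidable: by locality of the rule only finitely many input cells influence the window constrained by $[s]$ after $t$ steps, so one checks over finitely many completions whether the $t$-th image meets $[s]$, a terminating computation. Then ``there exist infinitely many $t$'' is the predicate $\forall N\, \exists t \geq N\, [\cdots]$, which is $\Pi_2^0$; prefixing the universal block $\forall u, w$ leaves it $\Pi_2^0$; and prefixing the existential block $\exists v, i$ yields a $\Sigma_3^0$ predicate. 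Since $s \in \lang(\gls(f))$ is equivalent to this predicate by \cref{lemma2}, and the language is countable, $\gls(f)$ is $\Sigma_3^0$.

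For tightness, I would fix a canonical $\Sigma_3^0$-complete set, written as $\{n : \exists a\, \forall b\, \exists c\, R(n,a,b,c)\}$ for a decidable $R$, and build a single CA $f$ via the walls-and-counters construction so that membership of a marker word $s_n$ (encoding $n$) in $\lang(\gls(f))$ reduces this set. The design is to match the three quantifier alternations of \cref{lemma2} to those of the predicate: the enabling word $v$ plants the existential witness $a$, giving $\exists v \leftrightarrow \exists a$; the segment isolated by the surrounding walls runs a computation that, given $a$, enumerates $b = 0, 1, 2, \dots$ and for each searches for a witness $c$ of $R(n,a,b,c)$, emitting the marker $s_n$ each time a value of $b$ is cleared; and the clause ``for all $u, w$ there are infinitely many $t$'' of \cref{lemma2} is made equivalent to this emission recurring forever, hence to $\forall b\, \exists c\, R(n,a,b,c)$. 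The walls guarantee that once $v$ is planted the computation in its segment is insulated from the content $u, w$ to either side, which is exactly what realizes the universal robustness over $u, w$ demanded by the characterization; under this insulation the presence of $s_n$ at time $t$ depends only on $a$ and $t$, so ``$\forall u,w$, infinitely many $t$'' collapses to ``the segment emits $s_n$ infinitely often,'' and since the values of $b$ are processed in order this holds precisely when every $b$ is eventually cleared.

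The main obstacle, and the heart of the construction, is the \emph{disappearance} side of the equivalence. For $s_n \notin \lang(\gls(f))$ when $n$ is not in the set, it does not suffice that the marker ceases to be emitted along a single trajectory: by the comeager formulation underlying \cref{lemma2}, every candidate planting must fail robustly, so that whatever value $a$ is planted there is a context (a choice of $u, w$ and a threshold $N$) past which no perturbation of bounded size ever reproduces $s_n$. This is the phenomenon stressed in the introduction, that a word absent from the GLS must eventually vanish and never return even under further small perturbation. Engineering the walls-and-counters mechanism so that a stuck search (some $b$ with no witness $c$) permanently and robustly suppresses the marker, while simultaneously ensuring that generically the segments do form and the intended computations do run, is the delicate part, and is where the perturbation analysis adapted from \cite{constructionfirst, mulimit, boyer2015mu} would be deployed.
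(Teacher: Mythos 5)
First, a point of comparison: the paper offers no proof of this proposition at all --- it is imported verbatim from \cite{Ilkka} (Theorem 1 there), so the only in-paper content is the citation. Judged on its own merits, your upper-bound argument is correct and is the standard one: the predicate of \cref{lemma2} has quantifier shape $\exists v, i\ \forall u, w\ \forall N\ \exists t \geq N\ [\cdots]$ with a decidable matrix (by locality of the rule only a finite window of the cylinder constraint influences the coordinates of $[s]$ after $t$ steps), and collapsing adjacent like quantifiers gives $\Sigma^0_3$. This is exactly the style of computation the paper itself performs to obtain \cref{pi2b}.

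The tightness half, however, is a blueprint rather than a proof, and the step you defer is the entire content of that direction. Two concrete gaps. First, in \cref{lemma2} the witness $v$ ranges over \emph{all} words, not only your intended plantings of a witness $a$; to obtain $\exists v \leftrightarrow \exists a$ you must also rule out that some unintended $v$ (for instance one containing fragments of walls, counters, or partially built markers present in the initial configuration) enables $s_n$ --- this is precisely what the cleaning and formatting machinery of \cref{sec:gener-constr} exists to guarantee, and it has to be verified, not assumed. Second, the robust-vanishing direction --- for \emph{every} $v$ and $i$, exhibiting $u, w$ and a threshold past which $s_n$ never reappears once some $b$ has no witness $c$ --- is not a perturbation analysis one can ``deploy'' after the fact; it is a design constraint that dictates how segments emit and retract markers (the emission schedule must starve the marker when the search stalls, and this starvation must survive the merging of segments and every choice of surrounding context). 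Until that mechanism is specified and \cref{lemma2} is checked against it in both directions, the reduction from a $\Sigma^0_3$-complete set is not established; \cite{Ilkka} carries this out with a full construction, and nothing in the present paper substitutes for it.
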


\begin{proposition}[\cite{Ilkka}, Prop. 1]
	If the generic limit set $\gls(f)$ of a given CA is a shift-minimal subshift, then its language is $\Sigma^0_2$.
\end{proposition}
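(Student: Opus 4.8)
The plan is to reduce the general $\Sigma^0_3$ characterization coming from \cref{lemma2} by one quantifier alternation, exploiting the forcing property of \cref{lemma3} together with the uniform recurrence that shift-minimality provides. First I would record the naive bound: by \cref{lemma2}, $s\in\lang(\gls(f))$ iff $\exists(v,i)\,\forall(u,w)\,\exists^\infty t\colon f^t([uvw]_{i-|u|})\cap[s]\neq\emptyset$. The innermost relation is decidable, since $f^t$ on the relevant finite window depends on only finitely many coordinates; ``$\exists^\infty t$'' is $\Pi^0_2$, prefixing $\forall(u,w)$ keeps it $\Pi^0_2$, and the outer $\exists(v,i)$ yields $\Sigma^0_3$. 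The goal is to trade this for a formula of the shape $\exists\,\forall\,(\text{decidable})$.

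The key characterization I would establish is
\[
 s\in\lang(\gls(f))\iff \exists (R,w,j,T)\ \forall t\ge T\colon\ f^t([w]_j)\subseteq\Big[\,\{u\in\A^R: s\sqsubset u\}\,\Big],
\]
where $[S]:=\bigcup_{u\in S}[u]$ for $S\subseteq\A^R$. The right-hand side is $\Sigma^0_2$: for fixed $(R,w,j,T,t)$ the containment is decidable, because the set of central length-$R$ windows of $f^t([w]_j)$ is a finite, computable set of words, each of which one simply tests for containing $s$; then ``$\forall t\ge T$'' makes it $\Pi^0_1$, and ``$\exists(R,w,j,T)$'' makes it $\Sigma^0_2$.

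For the forward direction I would use shift-minimality in the form of uniform recurrence: a minimal subshift satisfies that for every word in its language there is an $R$ such that the word occurs in every element of $\lang_R$. Given $s\in\lang(\gls(f))$, fix such an $R$, so $\lang_R(\gls(f))\subseteq\{u: s\sqsubset u\}$; applying \cref{lemma3} with width $R$ to any cylinder yields $(w,j,T)$ with $f^t([w]_j)\subseteq[\lang_R(\gls(f))]$ for all $t\ge T$, and composing the two inclusions gives the displayed containment. For the converse I would use only \cref{lemma3}: given the containment, apply \cref{lemma3} to $[w]_j$ with width $R$ to obtain a forcing subcylinder $[w']_{j'}\subseteq[w]_j$ and $T'\ge T$; for $t\ge T'$ any $x\in[w']_{j'}$ has $u:=f^t(x)_{[0,R-1]}\in\lang_R(\gls(f))$, while the containment forces $s\sqsubset u$, so $s$ is a subword of a word of $\lang(\gls(f))$ (which is nonempty by \cref{lemma1}), i.e.\ $s\in\lang(\gls(f))$.

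The main obstacle is precisely the step that makes the inner predicate decidable. In general the natural target is the containment $f^t([w]_j)\subseteq[\lang_R(\gls(f))]$, whose verification requires knowing $\lang_R(\gls(f))$ and is therefore not decidable; this is what forces the $\Sigma^0_3$ bound in the general case. Shift-minimality removes the difficulty: uniform recurrence lets me replace the unknown target set $\lang_R(\gls(f))$ by the decidable set $\{u\in\A^R: s\sqsubset u\}$, paying only an extra existential quantifier over $R$. Checking that this substitution is simultaneously sound (via the forcing subcylinder of \cref{lemma3}) and complete (via uniform recurrence) is where the argument's content lies.
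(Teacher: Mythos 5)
Your argument is correct: the $\exists(R,w,j,T)\,\forall t\ge T\,(\text{decidable})$ characterization is sound, with uniform recurrence of minimal subshifts giving the forward direction via \cref{lemma3} and the forcing subcylinder of \cref{lemma3} giving the converse. Note that the paper itself states this proposition as a citation of \cite{Ilkka} without reproducing a proof; your route (trading the unknown set $\lang_R(\gls(f))$ for the decidable set of length-$R$ words containing $s$, at the cost of one existential quantifier over $R$) is the standard argument for this result and matches the cited source's approach.
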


We also deduce the following corollary from \cite[Prop. 6]{Ilkka}:

\begin{corollary}
  \label{coro:chaintrans}
  If a subshift is chain transitive and has a finite factor that does not consist of fixed points, then it is not the generic limit set of any CA.
\end{corollary}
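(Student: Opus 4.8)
The plan is to argue by contradiction. Suppose some chain transitive $X = \gls(f)$ admits a factor map $\pi\colon X \to Y$ onto a finite subshift $Y$ that is not a set of fixed points; I will extract a contradiction from the fact that $\realm(\gls(f))$ is comeager. First I would reduce $Y$ to a single cyclic rotation. Chain transitivity passes to factors, so $Y$ is chain transitive; since $Y$ is finite, $\mathcal{S}_n(Y) = Y$ for large $n$, forcing $Y$ itself to be transitive, and a transitive finite subshift is a single periodic orbit. As $Y$ is not all fixed points, this orbit has least period $p \geq 2$, so up to relabeling $Y = \Z/p\Z$ with $\sg$ acting as $+1$. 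The central coordinate of the block code $\pi$ then gives a locally constant phase $\phi\colon X \to \Z/p\Z$ with $\phi(\sg z) = \phi(z) + 1$ for $z \in X$.

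The key structural step is to show that $f$ rotates this phase by a constant. Since $X$ is $f$-invariant by \cref{lemma1}, the function $h(z) = \phi(f z) - \phi(z)$ is defined and continuous on $X$, with values in $\Z/p\Z$. Using $f \sg = \sg f$ and the identity for $\phi$, one computes $h(\sg z) = (\phi(fz)+1) - (\phi(z)+1) = h(z)$, so $h$ is $\sg$-invariant. Its level sets are therefore clopen and $\sg$-invariant; were $h$ nonconstant, they would form a clopen $\sg$-invariant partition of $X$ into pieces at positive distance, which no sufficiently fine $\epsilon$-chain could cross, contradicting chain transitivity. Hence $h \equiv c$ for some constant $c$, that is, $\phi(f z) = \phi(z) + c$ on $X$. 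This is exactly where chain transitivity is used, and it plays the role of the structural input of \cite{Ilkka}, Prop.~6.

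Next I would propagate these identities off the limit set. Extending $\phi$ to a clopen neighborhood via its local rule and using local constancy together with compactness of $X$, I obtain a clopen $V$ with $X \subseteq V$ on which both $\phi(\sg w) = \phi(w) + 1$ and $\phi(f w) = \phi(w) + c$ hold. For every $x$ in the comeager set $\realm(\gls(f))$ we have $\omega(x) \subseteq X$, hence $f^t x \in V$ for all large $t$; consequently $\phi(f^{t+1} x) = \phi(f^t x) + c$ eventually, and $\phi(f^t x) = a(x) + t c$ for a well-defined offset $a(x) \in \Z/p\Z$. The sets $A_j = \{ x : a(x) = j \}$ then cover a comeager set, have the Baire property (the defining condition is $F_\sigma$), and satisfy $\sg(A_j) = A_{j+1}$ because $a(\sg x) = a(x) + 1$; in particular each $A_j$ is $\sg^p$-invariant and non-meager.

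The contradiction comes from a topological $0$–$1$ law: $\sg^p$ is mixing on $\A^\Z$, so every $\sg^p$-invariant set with the Baire property is meager or comeager. Each $A_j$ is therefore comeager, but $A_0$ and $A_1$ are disjoint while $p \geq 2$, and two comeager sets cannot be disjoint. This impossibility shows no such $f$ exists. I expect the two transfer steps to be the delicate points: persisting the constant-rotation identity from $X$ to a genuine neighborhood $V$ (so that it controls the transient, not merely the limit dynamics), and invoking topological ergodicity of $\sg^p$ to upgrade \emph{non-meager} to \emph{comeager}; by comparison the reduction to a cyclic factor and the clopen-partition argument are routine.
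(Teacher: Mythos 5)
Your argument is correct, and it is necessarily a different route from the paper's, because the paper offers no proof at all here: \cref{coro:chaintrans} is stated as a direct consequence of Proposition~6 of \cite{Ilkka}, an external result. What you have done is reconstruct a self-contained proof of essentially that proposition. The skeleton is sound: reduce the finite factor to a single $p$-cycle with $p \geq 2$ (a factor of a chain transitive system is chain transitive, and a finite chain transitive system is one cycle); observe that the phase defect $h(z)=\phi(fz)-\phi(z)$ is a continuous, hence locally constant, $\sg$-invariant function on the $f$-invariant set $\gls(f)$ (using \cref{lemma1}), so chain transitivity forces $h \equiv c$ via the clopen-invariant-partition obstruction; propagate both identities $\phi(\sg w)=\phi(w)+1$ and $\phi(fw)=\phi(w)+c$ to a clopen neighborhood $V \supset \gls(f)$ by local constancy and compactness; and run the Baire-category argument on the realm. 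Two points deserve more care than you give them. First, the set on which the offset $a$ is defined, $\{x : f^t x \in V \text{ eventually}\}$, is not obviously $\sg$-invariant since $V$ is not; the identity $a(\sg x)=a(x)+1$ and the equality $\sg(A_j)=A_{j+1}$ should be asserted only after intersecting with $\realm(\gls(f))$, which is $\sg$-invariant, comeager, and contained in that set. Your phrasing implicitly does this, but it should be explicit, and the Baire property of $A_j$ is then that of the intersection of a set with the Baire property with an $F_\sigma$ set. Second, the final step rests on the topological zero--one law (every $\sg^p$-invariant set with the Baire property is meager or comeager, since $\sg^p$ is topologically transitive on $\A^\Z$; cf.\ \cite[Thm.~8.46]{Ke95}); this is exactly the right tool, but it is the one nontrivial external ingredient and should be cited. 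With those two points made precise the proof is complete, and it has the merit of replacing the paper's citation-only step with an argument that can be checked in place.
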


One of our main results, \cref{thm:MixingRealization}, concerns cellular automata that act as the shift map on their generic limit set: we realize a class of chain mixing subshifts as such generic limit sets.
We now show that the chain mixing assumption is necessary in this context.

\begin{lemma}[\cite{Akin}, p. 175]
  \label{lem:Akin}
  If $(X,T)$ is a chain transitive topological dynamical system that is not chain mixing, then there is a factor map $\pi : (X,T) \to (F, S)$ onto a finite set $F$ with at least two elements on which $S : F \to F$ is a cyclic permutation.
\end{lemma}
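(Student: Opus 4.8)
The plan is to manufacture the cyclic factor directly from the lengths of $\epsilon$-chains at a suitable scale. Recall that for $\epsilon > 0$ an \emph{$\epsilon$-chain} of length $n$ from $x$ to $y$ is a sequence $x = z_0, z_1, \dots, z_n = y$ with $d(T(z_i), z_{i+1}) < \epsilon$ for each $i$; chain transitivity says such a chain exists for every $x, y$ and every $\epsilon$, while chain mixing says that for every $\epsilon$ there is a threshold $N$ beyond which chains of \emph{every} length $n \geq N$ exist between every pair. For fixed $\epsilon$ and a base point $x_0$, write $N_\epsilon(x,y)$ for the set of realizable chain lengths from $x$ to $y$, and set $P_\epsilon := N_\epsilon(x_0,x_0)$, the set of \emph{loop lengths} at $x_0$. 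Concatenating chains shows $P_\epsilon$ is closed under addition, hence is a subsemigroup of $\N$; let $p_\epsilon := \gcd P_\epsilon$ be its \emph{period}. A short conjugation argument, inserting a loop at $y$ between fixed chains $x_0 \to y$ and $y \to x_0$, shows $p_\epsilon$ does not depend on the choice of base point.

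The first step is to show that the failure of chain mixing produces a scale $\epsilon_0$ with $p := p_{\epsilon_0} > 1$. I would argue the contrapositive: if $p_\epsilon = 1$ for every $\epsilon$, then $(X,T)$ is chain mixing. By compactness, fix a finite $\delta$-net of $X$ (with $\delta$ small relative to $\epsilon$) and form the transition digraph whose edges record when $d(T(a), b)$ is small; $\epsilon$-chains correspond, up to the net scale, to directed paths, chain transitivity corresponds to strong connectivity, and the loop-length gcd $p_\epsilon$ equals the period of this digraph. The hypothesis $p_\epsilon = 1$ thus says the digraph is \emph{primitive}, and the standard fact that a primitive digraph admits a uniform $N$ beyond which directed paths of every length join every ordered pair of vertices transfers back to a uniform chain-mixing threshold on $X$. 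Consequently, if $(X,T)$ is \emph{not} chain mixing, some scale $\epsilon_0$ has period $p = p_{\epsilon_0} > 1$.

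Fixing such an $\epsilon_0$ and a base point $x_0$, the second step defines the colouring $\pi \colon X \to \Z/p\Z$ by $\pi(y) := n \bmod p$ for any $n \in N_{\epsilon_0}(x_0,y)$, taking $F = \Z/p\Z$ and $S(j) = j+1$. This is well defined: if $n, n' \in N_{\epsilon_0}(x_0,y)$ and $m \in N_{\epsilon_0}(y,x_0)$ (which exists by chain transitivity), then $n+m$ and $n'+m$ both lie in $P_{\epsilon_0}$ and hence are divisible by $p$, forcing $n \equiv n' \pmod p$. Appending $T(y)$ to a chain ending at $y$, which is legal since $d(T(y),T(y)) = 0 < \epsilon_0$, gives $\pi \circ T = S \circ \pi$, and evaluating along $x_0, T(x_0), T^2(x_0), \dots$ shows $\pi$ is onto, so $|F| = p \geq 2$ and $S$ is a cyclic permutation. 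Continuity is the cleanest point: given $y$ with a witnessing chain $z_0 = x_0, \dots, z_n = y$ of positive length (pad by a loop at $x_0$ if $y = x_0$), set $\eta := \epsilon_0 - d(T(z_{n-1}), y) > 0$; for every $y'$ with $d(y,y') < \eta$ the same chain with endpoint $y'$ is still a valid $\epsilon_0$-chain of length $n$, so $\pi$ is constant on the ball $B(y,\eta)$. Thus $\pi$ is locally constant, hence continuous, and is the desired factor map.

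I expect the main obstacle to lie in the first step rather than in the construction itself: one must carefully relate the three notions of period — the gcd of loop lengths of honest $\epsilon$-chains in $X$, the gcd of loop lengths at an arbitrary second base point, and the combinatorial period of the finite transition digraph — and verify that primitivity of the digraph yields a chain-mixing threshold that is \emph{uniform} over all pairs $(x,y)$, which is exactly where compactness enters. Once $p > 1$ is secured at a fixed scale $\epsilon_0$, the well-definedness, equivariance, surjectivity and locally constant continuity of $\pi$ all follow essentially immediately from the semigroup structure of chain lengths.
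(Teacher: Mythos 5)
The paper offers no proof of this statement: it is quoted directly from Akin's book, so there is no internal argument to compare against. Your proposal is a correct, self-contained reconstruction of the standard proof. The second step (defining $\pi(y) = n \bmod p$ from any realizable chain length $n$ from $x_0$ to $y$, and checking well-definedness via the semigroup $P_{\epsilon_0}$, equivariance by appending $T(y)$, surjectivity along the orbit of $x_0$, and continuity by perturbing only the endpoint of a witnessing chain) is complete and airtight as written. The first step is, as you anticipate, where the remaining work lies, but your plan is sound and the details are routine: one does not actually need the digraph's period to \emph{equal} $p_\epsilon$ — it suffices that it divides the gcd of the lengths of loops lifted from genuine $\epsilon'$-loops in $X$ at a finer scale $\epsilon'$, which is $1$ by hypothesis; strong connectivity follows from chain transitivity plus uniform continuity of $T$; and the transfer of Wielandt-type primitivity back to arbitrary pairs $(x,y)$ costs only two extra chain steps (one from $x$ into the net near $T(x)$, one from a net point $c$ with $d(T(c),y)<\epsilon$, whose existence chain transitivity guarantees), shifting the uniform threshold by $2$. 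One small remark: base-point independence of $p_\epsilon$, while true by your conjugation argument, is not actually needed, since you fix $x_0$ throughout the construction.
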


\begin{proposition}
  \label{prop:chain-mixing-needed}
  Let $f\colon \AZ \to \AZ$ be a CA such that $f|_{\gls(f)} = \sigma|_{\gls(f)}$.
  Then $\gls(f)$ is a chain mixing subshift.
\end{proposition}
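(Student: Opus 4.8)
The plan is to reduce the statement to an obstruction that is already available, namely \cref{coro:chaintrans}. First I would show that $G := \gls(f)$ is chain transitive as a subshift; then I would argue that a chain transitive generic limit set cannot fail to be chain mixing, by combining \cref{lem:Akin} with \cref{coro:chaintrans}. So the work splits into a ``connectivity'' part (chain transitivity) and a ``no periodicity'' part (upgrading to mixing).

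For the chain transitivity, the key idea is that under our hypothesis $G$ is itself an $\omega$-limit set of a single point, where the classical theory applies. Assume first that $\sigma|_G$ is not the identity, i.e. $G$ contains a non-constant configuration, so that $f|_G = \sigma|_G$ is a nontrivial shift. Then \cref{cor:ShiftGivesMinimal} makes $G$ inclusion-minimal, hence \cref{lemma4} gives condition \cref{it:E}. Exactly as in the proof of \cref{lemma4}, condition \cref{it:E} forces the set $B = \{x \in \A^\Z : G \subseteq \omega(x)\}$ to be comeager; intersecting $B$ with the comeager realm $\realm(G)$ (comeager since $G$ is a generic attractor) yields a comeager, hence nonempty, set of points $x$ with $\omega(x) = G$. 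Fix such an $x$. Since the $\omega$-limit set of a point under a continuous self-map of a compact metric space is internally chain transitive (classical, see e.g. \cite{Akin}), the subsystem $(G, f|_G)$ is internally chain transitive; and because $f|_G = \sigma|_G$, this is precisely internal chain transitivity of $(G,\sigma)$, i.e. chain transitivity of the subshift $G$. The remaining, degenerate case $\sigma|_G = \mathrm{id}$ (so $G = \{\bar a : a \in S\}$ consists of constant configurations) I would dispatch separately: each $\realm(\{\bar a\})$ is $\sigma$-invariant and has the Baire property, so by the topological zero--one law for the transitive shift it is meager or comeager; as $\realm(G) = \bigsqcup_{a \in S}\realm(\{\bar a\})$ is comeager and $S$ is finite, some $\realm(\{\bar a_0\})$ is comeager, whence $G \subseteq \{\bar a_0\}$ and $G$ is a single (chain mixing) point.

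With chain transitivity in hand, I would finish by contradiction. Suppose $G$ is chain transitive but \emph{not} chain mixing. By \cref{lem:Akin} there is a factor map from $(G,\sigma)$ onto a finite system $(F,S)$ with at least two elements on which $S$ is a cyclic permutation; a cyclic permutation of $\geq 2$ elements has no fixed points, so this finite factor does not consist of fixed points. Then \cref{coro:chaintrans}, applied to the chain transitive subshift $G$ together with this nontrivial finite factor, shows that $G$ cannot be the generic limit set of any CA — contradicting $G = \gls(f)$. Hence $G$ is chain mixing.

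I expect the chain transitivity step to be the main obstacle. The crux is manufacturing a single point whose $\omega$-limit set is \emph{all} of $G$, which is exactly where inclusion-minimality (through \cref{cor:ShiftGivesMinimal} and \cref{lemma4}) and the identity $f|_G = \sigma|_G$ must both be used; one also has to be careful that the classical internal chain transitivity is a statement about the CA dynamics $f$ and coincides with chain transitivity of the subshift only because $f$ and $\sigma$ agree on $G$. The constant-configuration case is a genuine edge case that I would not overlook, and it relies on the zero--one law for $\sigma$-invariant sets with the Baire property.
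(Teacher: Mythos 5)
Your proof is correct, and its second half coincides exactly with the paper's: assuming chain transitivity, a failure of chain mixing yields via \cref{lem:Akin} a finite factor of $(\gls(f),\sigma)$ that is a nontrivial cyclic permutation, contradicting \cref{coro:chaintrans}. Where you diverge is in establishing chain transitivity. The paper disposes of this in one line by citing an external result (\cite[Prop.~5]{Ilkka}, which gives chain transitivity of generic limit sets), whereas you rebuild it from the tools available in the paper: \cref{cor:ShiftGivesMinimal} gives inclusion-minimality, \cref{it:E} of \cref{lemma4} then makes $B=\{x \mid \gls(f)\subset\omega(x)\}$ comeager, intersecting with the comeager realm produces a point $x$ with $\omega(x)=\gls(f)$, and the classical internal chain transitivity of $\omega$-limit sets (together with $f|_{\gls(f)}=\sigma|_{\gls(f)}$, which is exactly what lets you transfer the statement from the $f$-dynamics to the $\sigma$-dynamics) finishes the job. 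This is a genuinely self-contained alternative whose only external ingredient is a standard fact about $\omega$-limit sets, at the cost of a case split: your separate treatment of $\sigma|_{\gls(f)}=\mathrm{id}$ via the topological zero--one law is sound (and could be shortened by invoking the paper's proposition that any subshift with nonmeager realm contains $\gls(f)$, applied to a single constant configuration), while the paper's citation avoids the split entirely. Both arguments are valid; yours trades a black-box reference for a slightly longer but more transparent derivation.
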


\begin{proof}
  By \cite[Prop. 5]{Ilkka}, $\gls(f)$ is chain transitive.
  If it is not chain mixing, we obtain a contradiction from \cref{lem:Akin} and \cref{coro:chaintrans}.
\end{proof}

To the previous obstructions on the language complexity of generic limit sets, we add the following.
Note the difference between shift-minimality (not properly containing a subshift) and inclusion-minimality (not properly containing an attractor of the CA): the former is a subshift-related property, while the latter is an attractor-related property.

\begin{proposition}\label{pi2b}
	Let $f$ be a CA. If $\gls(f)$ is inclusion-minimal, then its language is $\Pi_2^0$.
\end{proposition}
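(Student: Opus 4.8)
The plan is to re-characterise membership in $\lang(\gls(f))$ by a predicate of quantifier shape $\Pi_2^0$, using inclusion-minimality to eliminate the leading existential quantifier that appears in \cref{lemma2}. Recall that \cref{lemma2} writes $s \in \lang(\gls(f))$ as ``$\exists v, i$ such that $\forall u, w$ there are infinitely many $t$ with $f^t([uvw]_{i-|u|}) \cap [s] \neq \emptyset$''; it is exactly the outer $\exists v, i$ that forces the general bound up to $\Sigma_3^0$. Under inclusion-minimality this existential should become vacuous: any enabling word works, so one may take it empty and let the surrounding cylinder be arbitrary.

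Concretely, I would establish that for every $s \in \A^*$,
\[
  s \in \lang(\gls(f)) \iff \forall v \in \A^*\ \forall i \in \Z \text{ there are infinitely many } t \in \N \text{ with } f^t([v]_i) \cap [s] \neq \emptyset .
\]
The forward implication is precisely condition \cref{it:E} of \cref{lemma4}, which holds here because $\gls(f)$ is inclusion-minimal. For the backward implication --- which does not use minimality at all --- I would apply \cref{lemma2} with the empty enabling word $\varepsilon$ and $i = 0$: for any $u, w \in \A^*$ the cylinder $[uw]_{-|u|}$ equals $[u\,\varepsilon\,w]_{-|u|}$, and feeding this cylinder into the right-hand side above produces infinitely many suitable $t$, so the hypothesis of \cref{lemma2} is met and $s \in \lang(\gls(f))$.

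It then remains to count quantifiers in the right-hand side, read as a predicate in the free variable $s$. The innermost relation ``$f^t([v]_i) \cap [s] \neq \emptyset$'' is decidable: as $f$ has finite radius $r$, the word $f^t(x)_{[0,|s|-1]}$ depends only on $x_{[-rt,\,|s|-1+rt]}$, so nonemptiness amounts to a finite search for a content of this window, consistent with the part of $v$ it meets, whose $t$-th image equals $s$. Unfolding ``infinitely many $t$'' as $\forall N\, \exists t \geq N$, the right-hand side becomes $\forall v\, \forall i\, \forall N\, \exists t$ over a decidable predicate; collapsing the three universal quantifiers, this is $\Pi_2^0$ in $s$, whence $\lang(\gls(f))$ is $\Pi_2^0$.

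The one genuinely load-bearing step is the equivalence above, and within it the backward direction, which guarantees that the $\Pi_2^0$ condition captures exactly $\lang(\gls(f))$ rather than a strictly larger set; this is where the freedom to set the enabling word to $\varepsilon$ in \cref{lemma2} is essential. The remaining work --- decidability of the base relation from finite radius, and the quantifier tally --- is routine.
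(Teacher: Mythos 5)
Your proof is correct and follows essentially the same route as the paper: both rewrite membership in $\lang(\gls(f))$ as the $\Pi^0_2$ condition of \cref{it:E} of \cref{lemma4}, with the forward implication given by inclusion-minimality and the backward one by \cref{lemma2} with the empty enabling word. You spell out the backward direction and the decidability of the base relation more explicitly than the paper, which leaves these to the remark following \cref{lemma4}; there is nothing to correct.
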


\begin{proof}
	The condition in \cref{it:E} of \cref{lemma4} is a $\Pi^0_2$ sentence:
	\[
	\forall s \in \lang(\gls(f)), v \in \A^*, i \in \Z, \forall T \in \N, \exists t > T, f^t([v]_{i}) \cap [s] \neq \emptyset.
	\]
	Since $f$ is computable, given all the parameters as input, checking whether $f^t([v]_{i}) \cap [s]$ is empty can be done with a Turing Machine. The full predicate is therefore $\Pi_2^0$.
\end{proof}

\begin{proposition}
  \label{prop:minimal-is-minimal}
  Let $f$ be a CA on $\A^\Z$.
  If $\gls(f)$ is shift-minimal, then it is inclusion-minimal.
\end{proposition}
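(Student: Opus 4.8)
The plan is to verify inclusion-minimality through the combinatorial characterization of \cref{lemma4}: it suffices to check condition \cref{it:E}, that for every $s \in \lang(\gls(f))$, every $v \in \A^*$ and every $i \in \Z$ there are infinitely many $t$ with $f^t([v]_i) \cap [s] \neq \emptyset$; equivalently, that $\bigcup_{t \geq T} f^{-t}([s])$ is dense for all such $s$ and all $T$. So I would fix $s \in \lang(\gls(f))$, a cylinder $[v]_i$ and a threshold $T$, and try to produce a time $t \geq T$ together with a configuration of $[v]_i$ that maps into $[s]$. Throughout, shift-minimality enters as \emph{uniform recurrence}: since $\gls(f)$ is a minimal subshift, $s$ occurs syndetically, i.e.\ there is $R$ such that every word of $\lang(\gls(f))$ of length $R$ contains $s$.

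The first concrete step I would carry out uses forcing and a pigeonhole argument to reach condition \cref{it:E} \emph{up to a bounded shift}. By \cref{lemma3} applied to $[v]_i$ with a window width $n > R$, there is a $\gls(f)$-forcing sub-cylinder $[w]_j \subseteq [v]_i$ and a time $T_1$ such that $f^t([w]_j) \subseteq [\lang_n(\gls(f))]$ for all $t \geq T_1$. For each such $t$ the central window of $f^t(x)$ is a legal word, which by uniform recurrence contains an occurrence of $s$ at some position $p = p(t)$ lying in the finite set $[-K, K-|s|+1]$, where $n = 2K+1$; hence $f^t([v]_i) \cap [s]_p \neq \emptyset$. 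Since there are infinitely many admissible $t$ but only finitely many candidate positions, some fixed $p^\ast$ works for infinitely many $t$. By shift-equivariance of $f$ this is exactly condition \cref{it:E} at position $0$ for the translated cylinder $[v]_{i-p^\ast}$. To upgrade the position $p^\ast$ to $0$ I would bring in \cref{lemma2}: an enabling word $v^\ast$ for $s$ at some coordinate $i^\ast$ has the property that, placed at $i^\ast$ with \emph{arbitrary} finite surroundings, it forces $s$ at position $0$ infinitely often, and by equivariance $v^\ast$ at $i^\ast + k$ enables $s$ at position $k$. Combining uniform recurrence with \cref{lemma1} keeps every configuration produced inside the invariant subshift.

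The step I expect to be the main obstacle is controlling the \emph{exact} coordinate at which $s$ (or the enabling word $v^\ast$) is produced, since forcing only guarantees an occurrence \emph{somewhere} in the window, at a position depending on the particular legal word realized. My strategy would be to play the universal quantifier over $i$ in \cref{it:E} against shift-equivariance: producing $s$ at position $k$ from $[v]_i$ is the same as producing it at position $0$ from $[v]_{i-k}$, so it is enough to realize every required offset for \emph{some} starting cylinder. Uniform recurrence makes the set of offsets obtained from a single window syndetic, and, because by minimality $v^\ast$ sits at \emph{every} coordinate across the configurations of $\gls(f)$, one expects the forward images from $[v]_i$ to display $v^\ast$ at the precise coordinate $i^\ast$ needed, after which the robustness of the enabling property (it holds for all surroundings $u,w$) delivers $s$ at position $0$ infinitely often and lets the enabling witness be pulled back into $[v]_i$. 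Making this realizability rigorous -- namely that the forced images actually attain the target word at the \emph{exact} position, rather than merely somewhere in a syndetic set -- is where the real work lies, and it is the point at which shift-minimality must be used in full.
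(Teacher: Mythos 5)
Your first step is sound: combining \cref{lemma3} with the syndeticity of $s$ in the minimal subshift $\gls(f)$ and a pigeonhole argument does yield, for every cylinder $[v]_i$, a single offset $p^\ast$ in a bounded window such that $f^t([v]_i)\cap[s]_{p^\ast}\neq\emptyset$ for infinitely many $t$. But the passage from this to \cref{it:E} of \cref{lemma4} is a genuine gap, not a technicality, and you correctly flag it as the place ``where the real work lies'' without doing that work. Rewriting your conclusion via equivariance, what you have shown is: for every $i$ there is some $i'$ with $|i-i'|$ bounded such that the pair $(v,i')$ satisfies \cref{it:E}. Since $p^\ast$ depends on $v$ and $i$ in an uncontrolled way, the set of offsets $i'$ you reach is only syndetic, whereas \cref{it:E} quantifies over \emph{all} $i$; the universal quantifier cannot simply be traded against shift-equivariance. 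The proposed patch via enabling words does not close this: \cref{lemma2} guarantees only that \emph{some} configuration of the cylinder $[uv^\ast w]$ maps into $[s]$ at suitable times, not that the specific points of $f^t([v]_i)$ (a compact set, not a cylinder) do; and in any case you face for $v^\ast$ the same exact-positioning problem you faced for $s$. Nothing in your argument excludes the a priori possible scenario in which, say, $s$ is producible from $[v]_i$ only at odd coordinates.

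The paper's proof closes precisely this gap, by a different and essentially unavoidable argument: it shows that a failure of exact positioning would force a \emph{periodic} avoidance structure, which is incompatible with being a generic limit set. Concretely, it argues by contradiction: if $X=\gls(f)$ is not inclusion-minimal, there is a closed $K\subsetneq X$ with nonmeager realm and a word $v\in\lang(X)$ with $K\cap[v]=\emptyset$; intersecting shifted copies of the realm produces a nonempty subset of $X$ avoiding $[v]_{ip}$ for every $i\in\Z$ and some fixed $p$, and shift-minimality forces $p\geq 2$. Taking $q\geq 2$ minimal with this property, the map sending $x\in X$ to the set of cosets of $q\Z$ on which $x$ avoids $v$ is (again by minimality) a factor map onto a finite system with no fixed points, contradicting \cref{coro:chaintrans}. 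If you want to salvage your direct verification of \cref{it:E}, you would have to upgrade your syndetic set of good offsets to all of $\Z$, and the only known route is exactly this finite-factor obstruction; so the periodicity argument is the actual content of the proposition and is missing from your proposal.
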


\begin{proof}
  Suppose on the contrary that $X = \gls(f)$ is not inclusion-minimal.
  Then it properly contains a closed set $K \subset X$ with a nonmeager basin of attraction $B = \realm(K) \subset \A^\Z$ that is not comeager either (because $\gls(f)$ is the generic limit set).
  Since $K$ is closed, there exists $v \in \lang(X)$ with $K \cap [v] = \emptyset$.
  As $B$ has the Baire property, it is comeager in some nonempty open set, which we can choose to be a cylinder set $[w]_j \subset \A^\Z$ where $w$ is not the empty word.
  Our goal is to show that $v$ occurs periodically in every configuration of $X$, use these occurrences to construct a factor map onto a finite dynamical system, and obtain a contradiction with \cref{coro:chaintrans}.
  
  Denote $p = |w| > 0$.
  For each $n \geq 0$, the set $B_n = \bigcap_{i = -n}^n \sigma^{i p}(B)$ is comeager in the cylinder set $[w^{2n+1}]_{j-np}$ and is contained in the basin of $K_n = \bigcap_{i = -n}^n \sigma^{i p}(K)$.
  In particular, each $K_n$ is nonempty, hence their intersection $K' = \bigcap_{i \in \Z} \sigma^{i p}(K) \subset X$ is nonempty as well.
  Since $K$ is disjoint from $[v]$, we have $K' \subset \bigcap_{i \in \Z} (\A^\Z \setminus [v]_{i p})$.
  For $P \subset \Z$, define $X(P) = X \cap \bigcap_{i \in P} (\A^\Z \setminus [v]_i)$.
  We saw that $X(P) \neq \emptyset$ for some infinite subgroup $P = p\Z \subset \Z$. If we had $p=1$, $X(P)$ would be closed and stable by $\sigma$, hence a subshift contained in $X$. By shift-minimality of $X$, this means $X(P) = X$, which contradicts the nonemptiness of $[v] \cap X$. Thus $p \geq 2$.
  
  Let $q \geq 2$ be minimal such that $X(q \Z) \neq \emptyset$: there are configurations of $X$ with no subword $v$ starting on indices in $q\Z$.
  For $x \in X$, let $C(x) = \{ q \Z + i \mid x \in X(q \Z + i) \}$ be the set of cosets on which $x$ does not contain occurrences of $v$.
  Then $C(\sigma(x)) = C(x) + 1 = \{ q \Z + i + 1 \mid q \Z + i \in C(x)\}$ for all $x$.
  The number $|C(x)|$ is the same for all $x \in X$: the set $X'$ of those configurations $x$ for which $|C(x)|$ is maximal forms a subshift of $X$, and by shift-minimality of $X$ we have $X' = X$.
  Denote $m = |C(x)|$.
  
  The sets $C(x), C(\sigma(x)), \ldots, C(\sigma^{q-1}(x))$ are distinct for all $x \in X$: if $C(x) + a = C(\sigma^a(x)) = C(\sigma^b(x)) = C(x) + b$ for some $0 \leq a < b < q$, then $C(x) + (b-a) = C(x)$, meaning that $\sigma^{-a}(x) \in X((b-a)\Z)$, contradicting the minimality of $q$.
  Also, there exists $r \geq 0$ such that $C(x)$ only depends on $x_{[-r,r]}$: otherwise for all $r \geq 0$ there would exist $x(r) \in X \cap \bigcap_{i \in I} X(q [-r,r] + i)$ with $I \subset [0, q-1]$ of cardinality at least $m+1$, and a limit point $x$ of $(x(r))_{r \geq 0}$ would satisfy $C(x) \geq m+1$, a contradiction.
  All in all, we have shown that $C\colon (X, \sigma) \to (2^{\{0, \ldots, q-1\}}, {+1})$ is a morphism of dynamical systems whose image is finite and contains no fixed points.
  Since $X$ is shift-minimal, it is chain transitive; this contradicts \cref{coro:chaintrans}.
\end{proof}

Together with \cref{pi2b}, this result implies that a shift-minimal generic limit set has a $\Pi^0_2$ language.
In Proposition 1 of \cite{Ilkka} it was proved to be $\Sigma^0_2$, hence it must be $\Delta^0_2$.
Alternatively, one can apply the folklore result that every minimal $\Pi^0_k$ subshift is $\Delta^0_k$.

\begin{corollary}
  \label{cor:minimal-delta02}
  Let $f$ be a CA.
  If $\gls(f)$ is shift-minimal, then its language is $\Delta^0_2$.
\end{corollary}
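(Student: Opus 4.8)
The plan is to combine the two preceding results with a complexity argument. By \cref{prop:minimal-is-minimal}, if $\gls(f)$ is shift-minimal then it is inclusion-minimal, so \cref{pi2b} immediately gives that its language is $\Pi^0_2$. On the other hand, the Proposition of \cite{Ilkka} quoted earlier in this section states that a shift-minimal generic limit set has a $\Sigma^0_2$ language. Since $\Delta^0_2 = \Pi^0_2 \cap \Sigma^0_2$ by definition, combining these two bounds yields that the language is $\Delta^0_2$, which is exactly the claim.

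Concretely, I would open the proof by invoking \cref{prop:minimal-is-minimal} to pass from shift-minimality to inclusion-minimality, then apply \cref{pi2b} to conclude the $\Pi^0_2$ bound. I would then cite Proposition 1 of \cite{Ilkka} for the $\Sigma^0_2$ bound (which applies directly to shift-minimal generic limit sets, with no need for the inclusion-minimality detour). Intersecting the two complexity classes finishes the argument. This is essentially the remark already made in the paragraph immediately preceding the corollary statement, so the proof is a two-line assembly of cited facts.

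Alternatively, as the text notes, one could bypass \cref{prop:minimal-is-minimal} entirely and appeal to the folklore fact that every minimal $\Pi^0_k$ subshift is already $\Delta^0_k$. Under this route I would first establish only the $\Pi^0_2$ bound (again via \cref{prop:minimal-is-minimal} and \cref{pi2b}, or by any direct argument), and then invoke the folklore collapse for $k = 2$. The underlying reason this collapse holds is that in a minimal subshift a word $v$ is in the language precisely when it fails to be forbidden, and forbiddenness of $v$ can be detected by the appearance of some ``syndetically covering'' finite set of competing words; quantifier-counting then shows the complement of the language is also $\Pi^0_2$, forcing $\Delta^0_2$. I would not reproduce this folklore proof, merely cite it as the alternative.

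The main (and only) subtlety is bookkeeping rather than mathematics: one must ensure that the $\Sigma^0_2$ bound from \cite{Ilkka} genuinely applies to the shift-minimal hypothesis as stated, and that the $\Pi^0_2$ bound of \cref{pi2b} applies after the inclusion-minimality upgrade. Both implications are already in place in the excerpt, so no obstacle remains; the proof is a short corollary derivation. I would therefore keep it to a single sentence or two, matching the register of the surrounding corollaries.
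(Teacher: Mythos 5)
Your proposal is correct and follows exactly the paper's own argument: the paper derives the $\Pi^0_2$ bound from \cref{prop:minimal-is-minimal} together with \cref{pi2b}, combines it with the $\Sigma^0_2$ bound of Proposition 1 of \cite{Ilkka}, and also mentions the same alternative via the folklore fact that minimal $\Pi^0_k$ subshifts are $\Delta^0_k$. Nothing is missing.
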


The following result generalizes \cite[Prop. 2]{Ilkka}, where $f$ was required to be equicontinuous when restricted to $\gls(f)$.
To see that it really is a generalization, let $r$ be the radius of $f$ and recall that if the restriction $f|_{\gls(f)}$ is equicontinuous, then it is periodic with some period $p > 0$.
Choose a $\gls(f)$-forcing word $w \in \A^*$ with $f^t([w]_j) \subset [\lang_{r(2p+1)}(\gls(f))]$ for all $t \geq T$.
Then every $x \in [w]_{j-pr}$ satisfies $f^{t+p}(x)_{[0,r-1]} = f^t(x)_{[0,r-1]}$ for all $t \geq T$.
Thus we can extend $w$ into a blocking word and thus $f$ has an equicontinuity point on $\A^\Z$.

\begin{proposition}\label{s1b}
  Let $f$ be a CA with equicontinuity points. The language of its generic limit set $\gls(f)$ is $\Sigma_1^0$.
\end{proposition}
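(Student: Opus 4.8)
The plan is to exploit a blocking word to reduce the behavior relevant to $\gls(f)$ to that of a finite, eventually periodic system, and then to recognize $\lang(\gls(f))$ as the set of words produced in the limit by such systems. Since $f$ has equicontinuity points, by \cite[Prop. 2.1]{equicontinuity} it admits a blocking word $b \in \A^{2k+1}$, with an associated eventually periodic sequence $(v_n)_{n \in \N}$, $v_n \in \A^r$, such that every $x \in [b]_{-k}$ satisfies $f^n(x) \in [v_n]$. The feature I would use is that an occurrence of $b$ acts as a two-sided wall: the evolution of the cells lying between two occurrences of $b$ depends only on the finite initial content between them, together with the frozen boundary values read off from $(v_n)$. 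I would first record this shielding property carefully (this is where the radius $r$ and the offset $k$ must be tracked), and deduce that for any finite word $c$ the \emph{$b$-walled system} with interior $c$ has a state---namely the interior configuration together with the phase of $(v_n)$---ranging over a finite set once $(v_n)$ enters its periodic part. Hence its orbit is eventually periodic, and the set $W(c)$ of words occurring infinitely often in it coincides with the set of words occurring along its periodic cycle; as $(v_n)$ and $f$ are computable, $W(c)$ is computable from $c$ by simulating the system until a state repeats.

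I would then prove that $\lang(\gls(f)) = \bigcup_{c \in \A^*} W(c)$. The right-hand side is $\Sigma^0_1$: one enumerates all finite words $c$ and, for each, outputs the finitely many words of $W(c)$ obtained from the terminating simulation above, which gives the claimed bound.

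For the inclusion $\bigcup_c W(c) \subseteq \lang(\gls(f))$, suppose $s \in W(c)$ and set $v = b\,c\,b$, with a suitable position $i$ placing an interior occurrence of $s$ at the cells of $[s]$. For every $u, w \in \A^*$, the content strictly between the two walls of $v$ is untouched and shielded, so the evolution between those walls coincides with the $b$-walled system of interior $c$ and $s$ recurs periodically; thus $f^t([uvw]_{i-|u|}) \cap [s] \neq \emptyset$ for infinitely many $t$, which is exactly the statement that $v$ enables $s$. By \cref{lemma2}, $s \in \lang(\gls(f))$.

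For the reverse inclusion, which is the main difficulty, suppose $s \in \lang(\gls(f))$ and let $v$ and $i$ enable $s$ as in \cref{lemma2}. Because that condition holds for \emph{all} $u, w$, I am free to choose $u$ and $w$ so that the witness $u v w$ contains one occurrence of $b$ strictly to the left of both the cells of $s$ and of $v$, and another strictly to their right; the padding between these walls and the rest is arbitrary and becomes part of a finite interior word $c$. The enabling condition then yields infinitely many $t$ with $f^t([uvw]_{i-|u|}) \cap [s] \neq \emptyset$, and by the shielding property the content on the cells of $s$ at every time is governed entirely by the $b$-walled system with interior $c$. Hence $s$ occurs infinitely often in that system, i.e.\ $s \in W(c)$. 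The delicate points, which I expect to be the real work, are (i) choosing the wall positions so that the occurrences of $s$ genuinely fall inside the walled region, and (ii) checking that the abstract finite system I simulate reproduces the true between-walls evolution of $f$, including the correct eventually periodic boundary $(v_n)$; once these are in place the argument closes.
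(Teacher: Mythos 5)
Your proposal is correct and follows essentially the same route as the paper's proof: both use a blocking word on each side of the region of interest to obtain a finite, eventually periodic ``walled'' system whose interior evolution is determined by the finite content between the walls, and both recognize $\lang(\gls(f))$ as a computably enumerable union over such finite interiors (the paper phrases this as an explicit bound $N + p|\A|^{2i-r+1} \leq t \leq N + 2p|\A|^{2i-r+1}$ on the time window to check, while you simulate until a state repeats -- these are interchangeable). The two ``delicate points'' you flag are handled in the paper exactly as you anticipate, by tracking the offset $k$ and radius $r$ and by observing that the sequence $\alpha_i(x) = (f^n(x)_{[-i,i+r-1]})_{n\in\N}$ depends only on $x_{[-i-k,i+k-1]}$ and is eventually periodic.
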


\begin{proof}
  Recall that a one-dimensional CA with equicontinuity points has at least one blocking word.
  Let $b \in \A^{2k+1}$ be a blocking word for $f$, and let $v_n \in \A^r$ for $n \in \N$ be the associated sequence of words, which is eventually periodic: there are $N \geq 0$ and $p > 0$ with $v_{n+p} = v_n$ for all $n \geq N$.
  Let $i \in \N$ and consider a configuration $x \in [b]_{-i-k} \cap [b]_{i-k}$.
  We have $f^n(x) \in = [v_n]_{-i} \cap [v_n]_i$ for all $n \in \N$.
  As $r$ is the radius of $f$, no information can flow over the $v_n$-words, so that the word $f^{n+1}(x)_{[-i+r, i-1]} \in \A^{2i-r+1}$ is completely determined by $f^n(y)_{[-i, i+r-1]}$ for all $n$.
  Since the sequence $(v_n)_{n \in \N}$ is $p$-periodic from index $N$ onward, the sequence $\alpha_i(x) := (f^n(x)_{[-i, i+r-1]})_{n \in \N}$ is $q$-periodic from index $N + p |\A|^{2i-r+1}$ onward for some $q \leq p |\A|^{2i-r+1}$, and it only depends on $x_{[-i-k, i+k-1]}$.

  Let $s \in \A^*$ be arbitrary.
  We claim that $s \in \lang(\gls(f))$ if and only if there exist $i \geq \max(|s|, r)$ and $N + p|\A|^{2i-r+1} \leq t \leq N + 2p|\A|^{2i-r+1}$ such that $f^{-t}([s]) \cap [b]_{-i-k} \cap [b]_{i-k} \neq \emptyset$.
  As this condition is $\Sigma^0_1$, the result follows.
  
  Suppose first that the latter condition holds for some $i$ and $t$, and let $x \in f^{-t}([s]) \cap [b]_{-i} \cap [b]_i$ be arbitrary.
  Denote $v = x_{[-i-k, i+k]}$, which begins and ends with $b$.
  We claim that $v$ enables $s$ in the sense of \cref{lemma2}.
  For this, pick any $u, w \in \A^*$, and let $y \in [u v w]_{-i-k-|u|}$ be arbitrary.
  Since $y \in [b]_{-i} \cap [b]_i$, the sequence $\alpha_i(y) = \alpha_i(x)$ is periodic from index $N + p |\A|^{2i-r+1}$ onward.
  Hence $f^n(y)_{[0, |s|-1]} = f^n(x)_{[0, |s|-1]} = s$ holds for infinitely many $n$, and $v$ enables $s$.

  Conversely, suppose that the latter condition does not hold: for all $i \geq \max(|s|, r)$ and $N + p|\A|^{2i-r+1} \leq t \leq N + 2p|\A|^{2i-r+1}$ we have $f^{-t}([s]) \cap [b]_{-i} \cap [b]_i = \emptyset$.
  We show that no word $v \in \A^*$ enables $s$.
  Pick any $j \in \Z$ and let $i \in \N$ be so large that there exists $x \in [b]_{-i-k} \cap [v]_j \cap [b]_{i-k}$.
  By assumption $f^t(x)_{[0, |s|-1]} \neq s$ for all $N + p|\A|^{2i-r+1} \leq t \leq N + 2p|\A|^{2i-r+1}$.
  The sequence $\alpha_i(x)$ is $q$-periodic with $q \leq p |\A|^{2i-r+1}$ from index $N + p |\A|^{2i-r+1}$, so $f^t(x)_{[0, |s|-1]} \neq s$ holds for all $t \geq N + p|\A|^{2i-r+1}$.
  Hence $v$ does not enable $s$.
\end{proof}

%
%

\section{Generic construction}
\label{sec:gener-constr}

In this section we present a construction of a CA $f$ which serves as a base for the CA built in \cref{sec:realizations} and \cref{sec:structure}, where within each proof modifications are introduced.
This type of construction first appeared in \cite{constructionfirst}; our presentation is based on \cite{boyer2015mu}.
An even more complex version was presented in \cite{construction}.

The main idea is the following:
the alphabet $\A$ of $f$ is the cartesian product of several auxiliary alphabets regarded as layers. The biinfinite tape, using these layers, is divided into individual finite computation zones called segments where the computations occur after the deletion of most of the initial data.
The computations depend on the application at hand: we simulate Turing machines in \cref{sec:realizations} and store patterns from subshifts in \cref{sec:structure}.
Depending on the construction, some segments may be merged with other segments as time passes.

The aforementioned layers of $\A$ are:
\begin{itemize}
\item
  \emph{Main Layer $\A_{\mathrm{main}}$.}
  Three special symbols are included: \emph{walls symbols} $\W$, \emph{initialization symbols} $\I$, and \emph{blank symbols} $\$$.
  An initialization $\I$-symbol is turned into a $\W$-symbol at the first step of the automaton, and two successive $\W$-symbols delimit areas of computation called segments.
  As time goes by, desired patterns are written on this layer as needed.
\item
  \emph{Computation Layer $\A_{\mathrm{comp}}$.}
  It encodes, in each segment as delimited on the Main Layer, a Turing Machine $\mathcal{M}$ which carries over the desired computations, and possibly other tasks.
  The simulated $\mathcal{M}$ writes the results of its computation on the Main Layer (the details vary depending on the application).
\item \emph{Cleaning Layer $\A_{\mathrm{clean}}$.}
  Using several types of signals, this layer erases any relic from the initial configuration.
\end{itemize}

On each alphabet we have a blank symbol which replaces data that is said to be `erased' -- for instance, in the Main Layer this role is played by $\$$. We denote by $\pi_{\mathrm{main}}$, $\pi_{\mathrm{comp}}$, and $\pi_{\mathrm{clean}}$ the projections on the Main, Computation, and Cleaning Layers, respectively. We also formally define the following:
\begin{definition}
  Let $x \in \A^\Z$ be a configuration and consider the forward orbit $(f^n(x))_{n \geq 0}$.
  A \emph{segment} in the initial configuration $x$ is a sequence of successive cells $s(i,j) = x_ix_{i+1}\ldots x_{j-1}x_j$ such that $\pi_{main}(x_i)=\pi_{main}(x_j) = \I$ and $\pi_{main}(x_k) \neq \I$ for all $i < k < j$.
  For $n \geq 1$, a segment in $y = f^n(x)$ is a sequence $y_iy_{i+1}\ldots y_{j-1}y_j$ such that $y_i = y_j = \W$ and $s(i,j)$ is a segment of $x$.
\end{definition}

In order for all segments to start and perform their computations without disruption, it is necessary to clean the data on all layers in the initial configuration, with the exception of $\I$-symbols in $\A_{\mathrm{main}}$ -- which initiate the cleaning and the segments' internal processes, and are immediately turned into $\W$-symbols at the first step of the CA. Observe that walls may also be present in the initial configuration, \ie, some $\W$-symbols are not created by an $\I$-symbol. These walls need to be deleted.

The deletion process is carried out by signals $s_i$ and $s_o$ (inner and outer) generated by every initialization symbol $\I$ in both directions. They are erased once they meet their counterpart coming form another $\I$-symbol, and they delete any walls and other data they encounter. These signals are defined similarly to \cite[Section 3]{boyer2015mu}: the outer signal $s_o$ travels faster than $s_i$; $s_o$ deletes everything it encounters on each layer that is not another $s_o$; when two $s_o$ signals collide, they send auxiliary signals that bounce back on the inner signals $s_i$ behind them and return to the collision point. If the bouncing signals do not meet again at the same time step, the $(s_o,s_i)$ pair from which the latter one came has a greater gap between its two signals, meaning this pair has not been generated at time $1$ -- this holds since signals $s_o$ and $s_i$ can not be both present in the same cell. The pair $(s_o,s_i)$ with the greater gap is consequently deleted by other auxiliary signals generated by the latter bouncing signal.

\begin{figure}[H]
	\centering
	\begin{subfigure}[t]{0.48\textwidth}
		\centering
		\resizebox{\columnwidth}{!}{
			\begin{tikzpicture}
				\draw[black] (0,1) -- (8,1);
				\draw[black, line width=1.5] (8,1) -- (8,5);
				\draw[black, line width=1.5] (0,1) -- (0,5);
				\draw[black, line width=1.5] (3,1) -- (3,1.75);
				
				\draw[black, thick] (0,1) -- (4,2) -- (8,1);
				\draw[black!25, thick] (0,1) -- (4,5) -- (8,1);
				
				\draw[black, thick] (6,2.5) -- (4,2) -- (2,2.5);
				\draw [black, thick, dotted] (6,2.5) -- (6.4,2.6);
				\draw [black, thick, dotted] (2,2.5) -- (1.6,2.6);
				
				\draw[black!50] (4,2) -- (1.23,2.2) -- (4,2.4) -- (6.77,2.2) -- (4,2);
				\draw[black!50, thick, dashed] (4,2) -- (4,2.4);
				
				\draw[black, line width=0.25] (6,2.5) -- (4,2.4) -- (2,2.5);
				
				\draw[black!50] (7.1,1.22) -- (4.2,1);

				\draw (0,0.8) node {$\mathcal{I}$};
				\draw (8,0.8) node {$\mathcal{I}$};
				\draw (3,0.8) node {$\mathcal{W}$};
				\draw (2,1.3) node {$s_o$};
				\draw (5,1.6) node {$s_o$};
				\draw (0.7,2) node {$s_i$};
				\draw (7.3,2) node {$s_i$};
		\end{tikzpicture}}
		\caption{Pairs of signals $s_o$ and $s_i$ from two adjacent $\mathcal{I}$'s meet. The $s_o$'s erase everything else of the original configuration in the segment (here, a starting $\W$ and a lonely auxiliary signal).}\label{Comparison1}
	\end{subfigure}
	\hfill
	\begin{subfigure}[t]{0.48\textwidth}
		\centering
		\resizebox{\columnwidth}{!}{
			\begin{tikzpicture}
				\draw[black] (0,1) -- (8.5,1);
				\draw[black, line width=1.5] (0,1) -- (0,5);

				\draw[black, thick] (0,1) -- (4,2) -- (8,1);
				\draw[black!25, thick] (0,1) -- (4,5);
				\draw[black!25, thick] (8.5,1) -- (6.865,2.75);
				
				\draw[black, thick] (6,2.5) -- (4,2) -- (2,2.5);
				\draw [black, thick] (6,2.5) -- (7.8,3);
				\draw [black, thick, dotted] (2,2.5) -- (1.6,2.6);
				
				\draw[black!50] (4,2.4) -- (1.23,2.2) -- (4,2);
				\draw[black!50] (4,2) -- (7.18,2.355) -- (4,2.705);
				
				\draw[black, thick, dashed] (4,2) -- (4,2.4);
				\draw[black, thick, dotted] (4,2) -- (4,2.705);
				
				\draw[black, line width=0.25] (4,2.4) -- (2,2.5);

				\draw (0,0.8) node {$\mathcal{I}$};

				\draw (7.4,3.1) node {$s_o$};
				\draw (6,1.3) node {$s_o$};
				\draw (3.2,4.6) node {$s_i$};
				\draw (7.9,2) node {$s_i$};
		\end{tikzpicture}}
		\caption{``Wild'' signals from the original configuration cannot disrupt a pair of $s_o$ and $s_i$ coming from an $\mathcal{I}$. The slope of some auxiliary signals is slightly exaggerated for the phenomenon of them bouncing back not at the same time to be more visible.}\label{Comparison2}
	\end{subfigure}
	\caption{Space-time diagram of the deleting process.}
	\label{Comparison}
\end{figure}
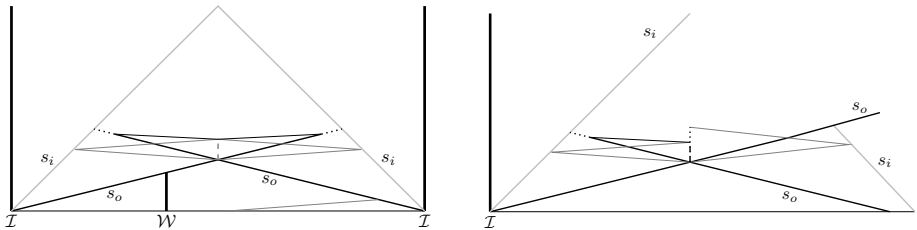

Just as the construction in \cite{boyer2015mu} protects specific $*$-states, our construction protects walls originating from $\I$-states, and deletes any other wall. The notable distinction with \cite{boyer2015mu} is that here, when signals from two $\mathcal{I}$-symbols collide, they merely vanish.
Note that these signals $s_o$ and $s_i$ need to move slower than speed $1$ (one cell at each time step) for the process with bouncing signals to go smoothly: speeds $1/4$ and $1/5$ work according to \cite{boyer2015mu}. This requires several states for the signals.

At time step 1, as it is turned into a $\W$-symbol and launches signals $s_i$ and $s_o$, each $\I$-symbol also starts an internal computation process on $\A_{\mathrm{comp}}$ in its associated segment. These internal computation processes vary for each construction, but in any case they have a clean canvas to perform any computation needed, as the outer signals $s_o$ will replace the contents of each correctly initialized segment with blank symbols.

%
%

\section{Realization of complexity for equicontinuity points}
\label{sec:realizations}

In this section, we realize a CA $f$ which realizes the bound in \cref{s1b}, that is, $f$ has equicontinuity points and the language of $\gls{(f)}$ is $\Sigma^0_1$-complete. Moreover, we show that such an $f$ can be built so that it acts as the identity on its generic limit set.
By the remark before \cref{s1b}, such an $f$ necessarily has equicontinuity points.

\begin{theorem}\label{equiident}
There exists a CA $f$ such that $\lang(\gls(f))$ is a $\Sigma^0_1$-complete set, and $f|_{\gls(f)} = \mathrm{id}|_{\gls(f)}$.
\end{theorem}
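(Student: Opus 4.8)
The plan is to build $f$ on top of the walls-and-counters base of \cref{sec:gener-constr}, with the segment computations tuned to enumerate a fixed $\Sigma^0_1$-complete set. Let $A = \{ n \in \N \mid M_n \text{ halts on empty input} \}$, which is $\Sigma^0_1$-complete, and fix an injective encoding $n \mapsto w_n \in \A_{\mathrm{main}}^*$ built around a delimiter symbol reserved for codewords, so that an occurrence of $w_n$ in a configuration is unambiguous and can only be produced deliberately, never by blanks, walls, or cleaning signals. Each segment, once its interior has been cleaned to blanks by the signals $s_i, s_o$, measures its own length $\ell$ by bouncing between its two bounding walls and then runs a space- and time-bounded search (dovetailing $M_0, \ldots, M_{\ell-1}$ for $\ell$ steps each). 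Whenever it confirms that some $M_n$ halts and $w_n$ fits inside the segment, it writes $w_n$ on the Main Layer; when the bounded search is exhausted it halts and freezes, leaving every layer of the segment static.

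First I would record the two easy facts. A wall $\W$ is a blocking word, so $f$ has equicontinuity points and \cref{s1b} already gives that $\lang(\gls(f))$ is $\Sigma^0_1$; only hardness and the identity action remain. For the latter, the comeager set of configurations with initialization symbols unbounded in both directions is partitioned into finite segments, each of which freezes after finitely many steps; hence for every such $x$ the sequence $f^t(x)$ stabilizes coordinatewise and converges to an $f$-fixed configuration, so $\omega(x) \subseteq \mathrm{Fix}(f)$. Thus $\mathrm{Fix}(f)$ — which is closed because $f$ is continuous — is a generic attractor, and therefore $\gls(f) \subseteq \mathrm{Fix}(f)$, i.e. $f|_{\gls(f)} = \mathrm{id}$.

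Next I would establish the reduction $n \mapsto w_n$ witnessing $\Sigma^0_1$-hardness through \cref{lemma2}. For $n \in A$, choose $\ell$ larger than the halting time of $M_n$, the index $n$, and $|w_n|$, and take $v$ to be a clean segment of length $\ell$ delimited by $\I$'s. Since the two walls obtained from these $\I$'s are blocking, the evolution inside $v$ is independent of any extension $uvw$: after cleaning, the segment's machine confirms $M_n$ halts, writes $w_n$, and freezes, so $f^t([uvw]) \cap [w_n] \neq \emptyset$ for all large $t$. Hence $v$ enables $w_n$ and $w_n \in \lang(\gls(f))$. Conversely, for $n \notin A$ and any candidate enabling word $v$, I would pick $u, w$ flanking $v$ with initialization symbols; the resulting cleaning signals sweep through $v$, deleting all its native walls and garbage, after which every segment of $uvw$ is properly initialized. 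Because $M_n$ never halts, no segment ever writes $w_n$, and any occurrence inherited from $v$ is erased in finite time; thus $w_n$ occurs in $f^t([uvw])$ for only finitely many $t$, so no $v$ enables $w_n$ and $w_n \notin \lang(\gls(f))$. Combined with the $\Sigma^0_1$ upper bound, this yields $\Sigma^0_1$-completeness.

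The main obstacle is the non-membership direction, which is exactly the phenomenon flagged in the introduction: for the GLS it is not enough that $w_n$ becomes rare, it must disappear forever under every sufficiently small perturbation. Making this robust requires the cleaning mechanism of \cref{sec:gener-constr} to reliably destroy \emph{all} native walls and leftover codeword fragments of an arbitrary $v$ once it is flanked by fresh $\I$'s, and it requires the encoding to be designed so that no partial or intermediate Main-Layer content ever spells out a complete $w_n$. Verifying these robustness properties — together with checking that each segment genuinely reaches a globally static configuration on \emph{all} layers, so that the fixed-point argument for the identity action goes through — is where the real work lies.
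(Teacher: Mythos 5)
Your proposal follows essentially the same route as the paper's proof: a walls-and-counters CA whose segments run length-bounded Turing machine simulations and print a halting certificate $w_n$ on the Main Layer before freezing, with the blocking walls giving the $\Sigma^0_1$ upper bound via \cref{s1b} and the $\I$-flanking argument of \cref{lemma2} giving both directions of the reduction from the halting problem. The only cosmetic differences are that the paper dedicates each segment to a single pair $(n,m)$ via a pairing function applied to the segment length (which sidesteps your space-budget bookkeeping for writing several codewords into one segment), and that it obtains the identity action by showing $\gls(f)$ is contained in a full shift of frozen symbols rather than by exhibiting $\mathrm{Fix}(f)$ as a generic attractor.
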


\begin{proof}
We describe a CA $f$ with the desired properties. Consider the construction from \cref{sec:gener-constr}, modified as follows.
\begin{itemize}
\item The Main Layer's alphabet is $\{0,1,\$,\I,\W\}$, where $\$$ is the blank symbol.
\item The only way to erase a wall is with an outer signal $s_o$. In particular, walls created by an $\I$ always remain, so that segments formed between two of them stay forever.
\item In addition to the signals $s_o$ and $s_i$, each $\I$ initializes a simulated computation of a Turing Machine $\M$ on the segment to its left.
\item The Cleaning Layer and its deleting process, described in \cref{sec:gener-constr}, remain untouched.
\item As the cleaning and deleting processes take place, all the information in any segment is replaced by $\$$-symbols.
\end{itemize}

As for the behavior of the machine $\M$, consider an enumeration of all Turing Machines $(\M_n)_{n\in\N}$ with a one-way infinite tape, and consider a computable bijection $p:\N\to\N\times\N$ -- for instance, the inverse of the Cantor pairing function, but we modify it so that we avoid any case where $\ell-(|\mathrm{bin}(n)|+1) <0$ with $(n,m)=p(\ell)$.

In each segment, $\M$ starts by determining the length $\ell$ of its segment (by sending a specific signal and waiting for its return, for instance) and computes $(n,m)=p(\ell)$. Then $\M$ simulates $m$ steps of computation of the machine $\M_n$ on the empty input. If $\M_n$ halts during these $m$ simulated steps, then $\M$ prints $\mathrm{bin}(n) \in \{0,1\}^*$, the binary representation of $n$, on the left end of the segment, leaving one blank cell between it and the left wall, and fills the rest of the segment with blank symbols. If $\M_n$ does not halt in at most $m$ steps of computation, $\M$ fills the segment with blank symbols. In both cases, once the described computations are done, $\M$ is deleted. In this manner, every segment is eventually of the form $\W \$ \mathrm{bin}(n) \$^k \W$ or $\W \$^\ell\W$, with $k=\ell-(|\mathrm{bin}(n)|+1)$ (notice that $p$ is designed so that $k \geq 0$).
The segment remains unchanged from that point on.

We first claim that $\gls(f) \subset {\left(\{\W, 0, 1, \$\}\times\{\$\}\times\{\$\}\right)}^\Z$.
Once proved, this implies ${f|_{\gls(f)} = \mathrm{id}|_{\gls(f)}}$, since $f$ acts as the identity on the above full shift.
Let $s \in \lang(\gls(f))$ be enabled by some cylinder set $[v]_i$ as per \cref{lemma2}.
We may assume, by extending $v$ if necessary, that $i \leq 0$ and $|v| \geq |i| + |s|$.
Choose $u = w = \I$.
Then for any configuration $x \in [u v w]_{i-1}$, the Cleaning Layer ensures that the word $f^t(x)_{[i-1,i+|v|+1]} \in \W \A^{|v|} \W$ consists of correctly initialized segments for all large enough $t \geq 1$.
The instances of the machine $\M$ simulated on the Computation Layer will eventually fill each segment with symbols from $\{0,1,\$\}$ and disappear.
Thus $f^t([u v w]_{i-1}) \subset [\W \{\W,0,1,\$\}^{|v|} \W]_{i-1}$ for all large enough $t$, and infinitely many of them contain $[s]$ due to $v$ enabling $s$, and thus $s \in \{\W,0,1,\$\}^*$.

Let $n \in \N$.
We claim that $s_n =\$ \mathrm{bin}(n) \$ \in \lang(\gls(f))$ if and only if $\M_n$ eventually halts.
First, if $\M_n$ never halts, then no correctly initialized segment will contain the word $s_n$.
By the analysis in the previous paragraph and the construction above, $s_n \notin \lang(\gls(f))$.
Suppose now that $\M_n$ halts in some $m$ steps.
Since $p$ is a bijection from $\N$ to $\N\times\N$, there exists $\ell$ such that $(n,m)=p(\ell)$.
We show that $s_n$ is enabled by the cylinder $C = [\I \$^\ell \I]_{-1}$.
For all $x \in C$ and $t \geq 1$, the word $f^t(x)_{[-1, \ell]} \in \W \A^\ell \W$ is a correctly initialized segment; and the instance of $\M$ it contains simulates $m$ steps of $\M_n$ on that segment. When $\M_n$ halts, $\M$ writes $\$ \mathrm{bin}(n) \$^{\ell-(|\mathrm{bin}(n)|+1)}$ on the segment and disappears.
Hence $f^t(x) \in [\$ \mathrm{bin}(n) \$]$ for all large enough $t$.

Since the set of Turing Machines that eventually halt on the empty input is known to be $\Sigma_0^1$-complete, we have built the expected CA.
\end{proof}

\section{Realization of structure}
\label{sec:structure}

\subsection{Statement and auxiliary results}

In this section, we realize two large classes of $\Pi^0_2$ subshifts as generic limit sets of cellular automata.
More specifically, we prove the following result:

\begin{theorem}
  \label{thm:MixingRealization}
  Let $X \subset \A^\Z$ be a chain mixing subshift satisfying one of the following conditions:
  \begin{enumerate}
  \item
    either $X$ is $\Pi^0_2$ and contains a nonempty $\Pi^0_1$ subshift;
  \item
    or $X$ is $\Delta^0_2$.
  \end{enumerate}
  Then there exists an alphabet $\B \supset \A$ a CA $f : \B^\Z \to \B^\Z$ with $\gls(f) = X$ and $f|_X = \sigma|_X$.
\end{theorem}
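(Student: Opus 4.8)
The plan is to realize $f$ on an extended alphabet $\B$ using the walls-and-counters skeleton of \cref{sec:gener-constr}, with one defining feature: on \emph{pure $\A$-content} (a main-layer symbol in $\A$ together with blank computation and cleaning layers) the local rule acts exactly as $\sigma$, while in the presence of walls, cleaning signals, or computation data it runs the deleting and computation processes. Since every $x\in X\subset\A^\Z\subset\B^\Z$ is pure $\A$-content, this gives $f|_X=\sigma|_X$ and $f(X)=\sigma(X)=X$ for free, so the whole burden is to prove $\gls(f)=X$. As $\gls(f)$ is a subshift by \cref{lemma1}, it suffices to show $\lang(\gls(f))=\lang(X)$, and by \cref{lemma2} this means proving that a word $s$ is enabled for $f$ if and only if $s\in\lang(X)$.

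For $\lang(X)\subset\lang(\gls(f))$, I would have each correctly initialized segment grow without bound and host, on its main layer, a scrolling window onto an $X$-configuration generated cell by cell by its machine $\M$: starting from a short admissible word, $\M$ repeatedly appends symbols that keep the word in $\lang(X)$ (every word of a subshift extends), using chain mixing to supply connectors and the complexity hypothesis to select the symbols, with the generation scheme arranged so that each target word recurs infinitely often. The main-layer content is then shifted by $f$, so this configuration scrolls past the origin; once the growing segment engulfs a given coordinate its bounding walls recede from it, leaving pure $\A$-content there for all later times. To enable a prescribed $s\in\lang(X)$ I take $v$ to contain an $\I$ spawning such a segment long enough to host $s$; for every $u,w$ the cleaning signals make the segment correctly initialized after finite time, and since $s$ recurs in the scrolled configuration, $f^t([uvw])$ meets $[s]$ for infinitely many $t$, so $v$ enables $s$.

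For the reverse inclusion $\lang(\gls(f))\subset\lang(X)$ the two hypotheses enter, and here the generic-limit-set robustness is delicate: unlike for $\mu$-limit sets, a forbidden $s\notin\lang(X)$ must be driven out under \emph{every} enabling attempt. Given any candidate $v$, I pick $u,w$ that plant fresh $\I$-symbols overwriting $v$'s region with a much larger segment, so it suffices that a sufficiently large clean segment never displays $s$ persistently. In the $\Delta^0_2$ case I invoke the Limit Lemma: a computable approximation $g(\cdot,t)$ with $g(s,t)\to\mathbbm{1}[s\in\lang(X)]$ lets a segment of budget $t$ generate only currently-admissible symbols, so for large $t$ it avoids $s$ and $s$ occurs only finitely often. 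In the $\Pi^0_2$ case membership is undecidable, so I use the nonempty $\Pi^0_1$ subshift $Y\subset X$ as a safe fallback: $\M$ verifies the defining $\forall\exists$ predicate of its intended block level by level, displaying reliably-generable $Y$-content (whose forbidden words are recursively enumerable, hence avoidable) while a level is still being searched. A block in $\lang(X)$ passes every level and is eventually scrolled persistently, whereas one containing $s\notin\lang(X)$ stalls forever at the first failing level and shows only $Y$-content, which never contains $s$ since $\lang(Y)\subset\lang(X)$. The same receding-walls mechanism shows that no auxiliary symbol ($\W$, $\$$, signals, computation states) is enabled, giving $\gls(f)\subset\A^\Z$.

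The main obstacle I anticipate is coordinating three tasks \emph{robustly under perturbation}: (i) realizing the honest shift on freed content; (ii) recessing the walls and erasing computation garbage even while cleaning signals from arbitrary $u,w$ stream through; and (iii) keeping every generated block $\lang(X)$-admissible so the scrolled content, and its bi-infinite limits as the budget grows, stays inside $X$. Chain mixing is exactly what supplies the connectors needed for (iii); the comeager-basin robustness—each offending word must vanish and never reappear, not merely thin out in frequency—is what makes (ii) sharper than in the $\mu$-limit-set constructions of \cite{mulimit,boyer2015mu}, and it is precisely where the $\Pi^0_1$ fallback (case 1) or the Limit-Lemma approximation (case 2) becomes indispensable.
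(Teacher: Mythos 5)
Your high-level architecture matches the paper's: walls-and-counters, the shift acting honestly on pure $\A$-content, segments feeding approximations of $X$ past the origin, and the $\Pi^0_1$ subshift versus a limit-lemma approximation to separate the two cases. But two load-bearing ideas are missing, and without them the argument does not close. First, ``the growing segment engulfs a given coordinate [and] its bounding walls recede'' is asserted, not constructed. In the walls-and-counters skeleton the segments are fixed by the positions of the $\I$-symbols in the initial configuration, so for $\gls(f)\subset\A^\Z$ you must design a \emph{merge} procedure by which a segment decides, using only information that can flow through a wall, that its right neighbour is strictly larger and should absorb it --- and this decision must be asymmetric (no deadlock between comparable neighbours), sound (never triggered by the neighbour's ordinary content), and eventually triggered around every coordinate of a generic configuration. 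The paper achieves this by storing on each rank-$m$ conveyor belt a periodic point of \emph{least} period exactly $2^{m+1}$ (\cref{lem:LeastPeriod}), sampling the neighbour's stream at clock times of period $3^{2^m}$ coprime to that period, detecting a longer neighbour via Fine--Wilf period analysis (\cref{lem:PeriodicSubwords}), and requiring $m$ ``merge candidates'' to absorb false positives coming from gluing words (\cref{lem:MergeIsCorrect}). Nothing in your proposal plays this role, and your claim that no $\W$, $\$$ or computation symbol is enabled rests entirely on it.

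Second, the approximation and gluing machinery is not in place. Chain mixing gives that every $\mathcal{S}_n(X)$ is mixing, but when $X$ is only $\Pi^0_2$ you cannot compute $\mathcal{S}_n(X)$, its mixing distance, or whether a one-symbol extension stays in $\lang(X)$; so ``$\M$ repeatedly appends symbols that keep the word in $\lang(X)$'' is not an algorithm. The paper substitutes \cref{lem:Pi02SFTApprox} and \cref{lem:Delta02SFTApprox}: a computable sequence of genuinely mixing SFTs $X_m$ with window size and mixing distance $o(\log m)$, satisfying $\lang(X)=\bigcap_{M}\bigcup_{m\ge M}\lang(X_m)$ together with distinguished words $w_m$ whose limit superior is exactly $\lang(X)$. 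Segments of rank $m$ display $X_m$-content; the limsup property is what makes every $s\notin\lang(X)$ vanish from all sufficiently high ranks while every $s\in\lang(X)$ recurs at infinitely many ranks, and the computable mixing distance is what makes the connectors at junctions actually computable (\cref{lem:GluingTimeComplexity}, \cref{lem:LanguageConverges}). Relatedly, your $\Pi^0_2$ fallback is not coherent as stated: a true $\forall\exists$ instance ``passes every level'' but is never fully verified, so it is never certified for persistent display, and if you display it between levels, the junctions between the block and the interleaved $Y$-content are themselves enabled words that must be shown to lie in $\lang(X)$ --- which again requires the approximation lemmas. In the paper, $Y$ is used not as safe filler during verification but to guarantee that the merge gluing always finds a certified target word in the neighbour's belt.
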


We prove the two cases of the theorem simultaneously, pointing out the (relatively minor) differences in the construction and proofs whenever they diverge; we call them the $\Pi^0_1 \subset \Pi^0_2$ case and the $\Delta^0_2$ case.
Though in a more complex fashion than \cref{sec:realizations}, the construction is also based on the walls-and-counters CA of \cref{sec:gener-constr}.
The role of each segment is again to help ensure that some specific word occurs in the generic limit set.
As small segments merge with larger ones in a process described in \cref{subsec:merge}, the words generated by the former are overwritten by those of the latter.
The main challenge is to implement this rewriting in such a way that it does not produce additional words in the generic limit set.
The chain mixing property, the existence of a nonempty $\Pi^0_1$ subshift, and the $\Delta^0_2$ complexity help us ensure this.
The chain mixing property is in fact necessary here, by \cref{prop:chain-mixing-needed}, since the construction below obeys its hypothesis.

We begin with two technical lemmas on the structure of the kinds of subshifts that appear in the statement of \cref{thm:MixingRealization}.

\begin{lemma}
  \label{lem:Pi02SFTApprox}
  Let $X \subset \A^\Z$ be a nonempty chain mixing $\Pi^0_2$ subshift, and $Y \subset X$ a nonempty $\Pi^0_1$ subshift.
  Then there exists a sequence $(X_m, Y_m, w_m)_{m \in \N}$, where each $X_m \subset \A^\Z$ is a mixing SFT, $Y_m \subset X_m$ is a nonempty SFT, and $w_m \in \lang(X_m)$, such that the following conditions hold.
  \begin{enumerate}
  \item
    \label{it:desc}
    $Y = \bigcap_{m \in \N} Y_m$ and $Y_{m+1} \subset Y_m$ for all $m \in \N$.
  \item
    \label{it:capcup}
    $\lang(X) = \bigcap_{M \in \N} \bigcup_{m \geq M} \lang(X_m) = \bigcap_{M \in \N} \bigcup_{m \geq M} \{w_m\}$.
  \item
    \label{it:window}
    For each $m$, the window size and mixing distance of $X_m$, the window size of $Y_m$, and the length $|w_m|$ are all $o(\log m)$.
  \item
    \label{it:space}
    The function $m \mapsto (X_m, Y_m, w_m)$ is computable in $O(2^m)$ space.
  \end{enumerate}
\end{lemma}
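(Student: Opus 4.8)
The plan is to build the three sequences from two computable ``handles'' on $X$ and $Y$: the $\Pi^0_1$ subshift $Y$ gives a recursive enumeration of its forbidden words, hence a \emph{decreasing} computable sequence of SFTs approximating it from above, while the $\Pi^0_2$ subshift $X$ only affords a \emph{limsup} representation of $\lang(X)$ by computable word sets. Throughout I fix a computable nondecreasing $g\colon\N\to\N$ with $g(m)\to\infty$ and $g(m)=o(\log m)$ (e.g.\ $g(m)=\lfloor\log_2\log_2(m+4)\rfloor$); every window size, mixing distance, and word length produced below will be bounded by $g(m)$, which immediately yields \cref{it:window}. I also record the only structural input from chain mixing that I will use: for every $n$ the SFT approximation $\mathcal{S}_n(X)$ is mixing, with some (a priori noncomputable) mixing distance $\mu(n)<\infty$, and $X\subset\mathcal{S}_n(X)$, so $\lang(X)\subset\lang(\mathcal{S}_n(X))$.

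The sequence $Y_m$ is the easy part. Enumerating the forbidden words of $Y$ with space budget $O(2^m)$ and keeping those of length at most $g(m)$ gives a set $F_m$ that is nondecreasing in $m$; let $Y_m$ be the SFT forbidding $F_m$. Then $Y\subset Y_m$ (so each $Y_m$ is a nonempty SFT of window size $\le g(m)$), the sequence is decreasing, and since $g(m)\to\infty$ and the budget grows, every forbidden word is eventually included, so $\bigcap_m Y_m=Y$. This gives \cref{it:desc}.

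The heart of the proof is the construction of the mixing SFTs $X_m$. First I would fix a computable predicate $R$ with $v\in\lang(X)\iff\forall a\,\exists b\,R(v,a,b)$ and, at a slowly increasing \emph{level} $k_m\le g(m)$, compute within budget $O(2^m)$ a guess $\Lambda_m\subset\A^{k_m}$ for $\lang_{k_m}(X)$ by bounded search on $R$. On the \emph{good} steps where $\Lambda_m=\lang_{k_m}(X)$ the associated SFT is exactly $\mathcal{S}_{k_m}(X)$, which is mixing by chain mixing; the $\Pi^0_2$ ``progress'' argument (each word of $\lang(X)$ keeps advancing its verified level, each nonword eventually stops) guarantees that good steps occur infinitely often at every level, so every word of $\lang(X)$ lies in $\lang(X_m)$ for infinitely many $m$, while each word outside $\lang(X)$ is produced only finitely often. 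To make \emph{every} $X_m$ a nonempty mixing SFT I would add $\lang_{k_m}(Y_m)$ to the allowed blocks (forcing $Y_m\subset X_m$, hence nonemptiness and the required inclusion), and then run the decidable test ``is this SFT mixing with distance $\le g(m)$?''. When the test fails I fall back on the current from-above SFT approximation supported by $Y_m$, whose surplus words all lie in $\lang(Y_m)$ and hence vanish from the limsup as $Y_m\downarrow Y\subset X$. Finally I would let $w_m$ run through the blocks certified at step $m$, so that $\limsup_m\{w_m\}=\lang(X)$ as well, giving both equalities of \cref{it:capcup}.

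For \cref{it:space}, note that $k_m\le g(m)=o(\log m)$ forces $|\A|^{k_m}=m^{o(1)}$, so the guess $\Lambda_m$, the SFT language, and the mixing-distance test (a reachability/primitivity check on at most $|\A|^{k_m}$ vertices up to radius $g(m)$) are all computable in space $O(2^m)$, as is the bounded enumeration of forbidden words of $Y$. The step I expect to be the main obstacle is reconciling \emph{mixing}---a global property whose distance $\mu(k)$ is noncomputable and may grow arbitrarily fast---with the fact that $\Pi^0_2$ only affords \emph{limsup} (infinitely-often-correct), not limit (eventually-correct), computable data, all while keeping the sizes $o(\log m)$. My resolution is to never try to compute $\mu$: I keep the level $k_m$ fixed and only advance it once the decidable test has certified a correct-looking guess as mixing with distance $\le g(m)$, which must eventually happen because at a fixed level $g(m)$ overtakes $\mu(k)$ while good steps keep recurring; the $\Pi^0_1$ subshift $Y$ is exactly what lets the ``off'' steps remain nonempty mixing SFTs without polluting the limsup, since its forbidden words, unlike those of $X$, are recursively enumerable and hence admit genuine from-above approximation.
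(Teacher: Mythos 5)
Your high-level architecture (a decreasing from-above SFT sequence for the $\Pi^0_1$ subshift $Y$, a limsup-type sequence of mixing SFTs for the $\Pi^0_2$ subshift $X$, and sizes forced to $o(\log m)$ with $O(2^m)$ space by slowing everything down) matches the paper, and your treatment of $Y_m$ and of conditions (3)--(4) is fine. The gap is in the step you pass over as ``the $\Pi^0_2$ progress argument guarantees that good steps occur infinitely often at every level.'' What the progress argument actually gives is \emph{per-word} information: each $v \in \lang(X)$ has its verified level $k_v(t)$ tend to infinity, and each non-word has $k_v(t)$ stabilize. It does \emph{not} give infinitely many times at which the \emph{entire finite set} $\lang_{k}(X)$ is guessed exactly right by a computable bounded search. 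Any rule that eventually and permanently excludes every word whose counter stabilizes must use a threshold or window that grows without bound, and an adversarial $\Pi^0_2$ index can interleave the confirmation times of two words $v_1, v_2 \in \lang_k(X)$ so sparsely (each new confirmation arriving only after the other word has already fallen below the current threshold) that no time $t$ has both words simultaneously above it; then $\Lambda_m = \lang_{k_m}(X)$ holds only finitely often and both halves of condition (2) collapse. This simultaneity problem is exactly what the paper's proof is engineered to avoid: it maintains a queue and certifies \emph{one word $w$ at a time}, requiring the output $X_m$ only to be \emph{some} mixing SFT that contains that single $w$, contains $Y_m$, and avoids the forbidden set $F_p = \{v \in \A^{\leq p} \mid k_v \leq p\}$ with $p = \max(|w|, k'_w)$. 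The point is that membership of a fixed word in $F_p$ \emph{stabilizes} (words of $\lang(X)$ leave $F_p$ permanently, and new queue entries have $p$ so large that every dead word stays in $F_p$ permanently), so the certification condition for $w$ becomes true eventually and forever, with $\mathcal{S}_p(X)$ as a witness --- no exact reconstruction of $\lang_{\leq p}(X)$ at any single moment is ever needed.

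A second, independent problem is your fallback on the ``off'' steps. You propose that when the mixing test fails you output ``the current from-above SFT approximation supported by $Y_m$, whose surplus words all lie in $\lang(Y_m)$.'' But $Y$ is an arbitrary nonempty $\Pi^0_1$ subshift contained in $X$ --- for instance the two-point orbit of ${}^\infty(01)^\infty$ --- so $\mathcal{S}_k(Y_m)$ need not be mixing for any $k$, and no nonempty mixing SFT with language inside $\lang(Y_m)$ need exist. Conversely, any mixing SFT merely \emph{containing} $Y_m$ has surplus that $Y_m$ does not control, so the ``vanishes from the limsup as $Y_m \downarrow Y$'' argument does not apply to it. In the paper, $Y_m$ only ever appears on the lower side of the sandwich ($Y_m \subset X_m$, used later to guarantee merge gluing words exist); the upper control on $\lang(X_m)$ comes entirely from the stabilizing set $F_p$. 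You correctly identified the non-computability of the mixing distance as an obstacle and your resolution of that particular point (wait at a fixed level until a decidable mixing test with slowly growing allowed distance succeeds) is in the right spirit, but the two issues above are where the actual difficulty of the lemma lives, and the proposal as written does not get past them.
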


\begin{proof}
  Since $X$ is $\Pi^0_2$, there exists a computable predicate $\phi_X$ such that $\lang(X) = \{ w \in \A^* \mid \forall k\ \exists \ell \  \phi_X(w, k, \ell) \}$.
  Since $Y$ is $\Pi^0_1$, there exists a computable predicate $\phi_Y$ such that $\lang(Y) = \{ w \in \A^* \mid \forall k \  \phi_Y(w, k) \}$.
  We first describe an algorithm that produces a sequence of SFTs satisfying the first two items.
  Then we modify it to satisfy the remaining items as well.

  The algorithm keeps track of three finite sets of words $M, Q, F \subset \A^*$, which we call the memory, the queue, and the forbidden set.
  All three sets are initially empty.
  The memory and queue are used to construct the $X_m$ and $w_m$, while the forbidden set is used for $Y_m$.
  For each $w \in M$, the algorithm also stores numbers $k_w, \ell_w \in \N_0$, and for each $w \in Q$ it stores a number $k'_w \in \N_0$.

  The algorithm proceeds in rounds, starting from $i = 0$.
  Round $i$ consists of the following steps:
  \begin{enumerate}
  \item
  \label{st:fillM}
    Add a new word $u \in \A^* \setminus M$ to $M$; they are added in increasing order of length, and lexicographically for a given length.
    Set $k_u = k'_u = \ell_u = 0$.
  \item
  \label{st:firing}
    For each $w \in M$, check whether $\phi_X(w, k_w, \ell_w)$ holds.
    If it does, we say that $w$ fires, and we increment $k_w$ and set $\ell_w = 0$.
    If $w$ is not an element of $Q$, we also add it to $Q$ and set $k'_w = k_w$.
    If $\phi(w, k_w, \ell_w)$ does not hold, we increment $\ell_w$.
  \item
  \label{st:output}
    For each $w \in Q$, do the following.
    Let $Y'$ be the SFT defined by forbidding all words in the forbidden set $F$.
    Denote $p = \max(k'_w, |w|)$ and $F_p = \{ v \in \A^{\leq p} \mid k_v \leq p \}$.
    If the SFT defined by forbidding $F_p$ contains a mixing sub-SFT $X'$ with $w \in \lang(X')$ and $Y' \subset X'$, then remove $w$ from $Q$ and output the triple $(X', Y', w)$.
  \item
    \label{st:buildY}
    For each $w \in \bigcup_{j \leq i} \A^j$, if there exists $k \leq i$ such that $\phi_Y(w, k)$ does not hold, then add $w$ to $F$.
  \end{enumerate}
  The algorithm executes these rounds in an infinite loop.
  It outputs a sequence of triples, which we denote by $(X_m, Y_m, w_m)_{m \in \N}$.

  By construction, each $X_m$ produced by the algorithm is a mixing SFT with $w_m \in \lang(X_m)$ and $Y_m \subset X_m$.
  Since the algorithm never removes words from $F$, the sequence $(Y_m)_{m \in \N}$ is decreasing.
  Step~\ref{st:buildY} of each round guarantees that every $w \in \A^* \setminus \lang(Y)$ is eventually added to $F$, so \cref{it:desc} of the statement holds.
  
  Consider then a word $w \in \lang(X)$.
  Due to the definition of $\phi_X$, it fires an infinite number of times during the execution of the algorithm.
  Whenever $w$ fires and is not in the queue, it is added there and the number $k'_w$ is fixed for all rounds until $w$ leaves the queue.
  Consider then $p = \max(|w|, k'_w)$ and the set $F_p$; they are both fixed until $w$ leaves the queue.
  We prove that $w$ does leave the queue after some round.
  
  Because $X$ is chain mixing, the SFT approximation $\mathcal{S}_p(X)$ is mixing, and its language contains $w$ since $p \geq |w|$.
  If $w$ did not leave the queue before that due to some output $(X_m, Y_m, w_m)$ with $w_m = w$, each word $u$ in $\lang_{\leq p}(X)$ will eventually leave $F_p$ (because its $k_u$ grows to infinity with the rounds), and each word in $\A^{\leq p} \setminus \lang(Y)$ will eventually enter $F$ (by definition of Step~\ref{st:buildY}).
  Once this happens, we have $Y' \subset \mathcal{S}_p(X)$.
  Thus we can choose $\mathcal{S}_p(X)$ as $X'$ if a suitable mixing SFT was not found earlier, and the algorithm outputs $(X', Y', w)$ as $(X_m, Y_m, w_m)$ for some $m$.
  Therefore $w$ is removed from the queue after some round.
  
  Furthermore, any such $w$ is added again at a later round when it eventually fires anew, since $w \in \lang(X)$.
  Thus $w \in \bigcap_{M \in \N} \bigcup_{m \geq M} \{w_m\}$.
  In particular the algorithm produces an infinite sequence of triples.
  
  Now, take a word $w \notin \lang(X)$, which fires only a finite number of times.
  Denote $n = |w|$.
  After some number of rounds, each word $u \in \A^{\leq n} \setminus \lang(X)$ has fired for the last time and the value $k_u = k'_u$ has settled into a constant. These words may leave the queue once more, but produce a finite number of outputs (to which $w$ may belong) by doing so.
  
  Let $K_n = \max \{ k_u \mid u \in \A^{\leq n} \setminus \lang(X) \}$.
  After a bigger number of rounds, whenever a new word $v \in \A^*$ fires and enters the queue, we have either $|v| > K_n$ or $k'_v > K_n$. Indeed, after some point, we have that $k_v > K_n$ holds for all $v \in \lang_{\leq K_n}(X)$.
  This means that $w$ will never leave the set $F_p$ for $p = \max(|v|, k'_v)$, and thus does not occur in the mixing SFT $X'$ if one is produced for such a $v$.
  
  Hence $w$ belongs to a finite number of $\lang(X_m)$, and as such we conclude that $w \notin \bigcap_{M \in \N} \bigcup_{m \geq M} \lang(X_m)$.
  We have shown that \cref{it:capcup} is satisfied, considering its remaining inclusions are obvious.

  Next, we modify the algorithm so that it produces a modified sequence $(X_{s(m)}, Y_{s(m)}, w_{s(m)})_{m \in \N}$,~where $s\colon \N \to \N$ is a nondecreasing computable function with $s(m) \leq s(m+1) \leq s(m)+1$ for all $m$.
  All such sequences satisfy the first two conditions.
  Since the mixing distance and window size of $X_m$, the window size of $Y_m$, the length $|w_m|$ and the space used by the unmodified algorithm are all computable from $m$, we can choose $s$ to grow slowly enough so that the remaining conditions, \cref{it:window} and \cref{it:space}, hold as well.
\end{proof}

\begin{lemma}
  \label{lem:Delta02SFTApprox}
  Let $X \subset \A^\Z$ be a nonempty chain mixing $\Delta^0_2$ subshift.
  Then there exists a sequence $(X_m, w_m)_{m \in \N}$, where each $X_m \subset \A^\Z$ is a mixing SFT and $w_m \in \lang(X_m)$, such that the following conditions hold.
  \begin{enumerate}
  \item
    \label{it:capcupcap2}
    $\lang(X) = \lim_{m \in \N} \lang(X_m) = \bigcap_{M \in \N} \bigcup_{m \geq M} \{w_m\}$.
  \item
    \label{it:window2}
    For each $m$, the window size and mixing distance of $X_m$ and the length $|w_m|$ are all $o(\log m)$.
  \item
    \label{it:space2}
    The function $m \mapsto (X_m, w_m)$ is computable in $O(2^m)$ space.
  \end{enumerate}
\end{lemma}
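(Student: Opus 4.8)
The plan is to exploit the extra strength of $\Delta^0_2$ over $\Pi^0_2$ through the limit lemma: since $\lang(X)$ is $\Delta^0_2$, it admits a computable approximation, i.e.\ a computable $g\colon \A^* \times \N \to \{0,1\}$ such that for every $u$ the sequence $(g(u,t))_t$ stabilizes and $\lim_t g(u,t)=1$ exactly when $u \in \lang(X)$. Whereas the $\Pi^0_2$ algorithm of \cref{lem:Pi02SFTApprox} only knows that words of $\lang(X)$ ``fire'' infinitely often, here the approximation eventually tells the truth about each word and never changes its mind again. First I would turn this into SFTs as follows. For $p,t \in \N$, let $S_p^{(t)}$ be the width-$p$ SFT forbidding $\{u \in \A^p : g(u,t)=0\}$, and call $(p,t)$ \emph{admissible} if $S_p^{(t)}$ is a nonempty mixing SFT and, moreover, $\lang_k(S_p^{(t)}) = \{u \in \A^k : g(u,t)=1\}$ for every $k \le p$. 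Both requirements are decidable. Define $p(t) = \max\{\,p \le t : (p,t)\text{ is admissible}\,\}$ (and $0$ if none exists), and set $X_t = S_{p(t)}^{(t)}$.

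The hard part is to obtain the full limit $\lim_t \lang(X_t) = \lang(X)$ rather than merely the $\limsup$ of \cref{lem:Pi02SFTApprox}; the admissibility requirement is exactly what buys this, by ruling out locally admissible but transient words that could otherwise spoil convergence. Indeed, admissibility forces the language of $X_t$ to agree with the current guess up to its window size: for $|u| \le p(t)$ one has $u \in \lang(X_t) \iff g(u,t)=1$. I would first check that $p(t) \to \infty$. Using chain mixing, every SFT approximation $\mathcal{S}_p(X)$ is mixing, and a short computation shows $\lang_k(\mathcal{S}_p(X)) = \lang_k(X)$ for all $k \le p$; hence as soon as $t \ge p$ exceeds the finite stabilization time of $g$ on all words of length $\le p$, the pair $(p,t)$ is admissible with $S_p^{(t)} = \mathcal{S}_p(X)$, so $p(t) \ge p$. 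Now fix a word $v$ of length $n$. For all large $t$ we simultaneously have $p(t) \ge n$ and $g(v,t) = [v \in \lang(X)]$, whence $v \in \lang(X_t) \iff g(v,t)=1 \iff v \in \lang(X)$. Thus each word is classified correctly by $\lang(X_t)$ for all but finitely many $t$, giving the desired limit.

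Finally I would extract the sequence and arrange the side conditions. To realize the second equality of \cref{it:capcupcap2}, dovetail over all pairs $(t,v)$ and emit $(X_t, v)$ whenever $v \in \lang(X_t)$ (i.e.\ $|v| \le p(t)$ and $g(v,t)=1$); this guarantees $w_m \in \lang(X_m)$, makes every $v \in \lang(X)$ occur as $w_m$ infinitely often and every $v \notin \lang(X)$ only finitely often, and since each $X_t$ is emitted finitely many times while $p(t)\to\infty$, the reindexed sequence still satisfies $\lim_m \lang(X_m)=\lang(X)$. The window size and mixing distance of $X_t = S_{p(t)}^{(t)}$, together with $|w_m|$, are computable and bounded in terms of $p(t) \le t$, and the whole procedure runs in space bounded by a computable function of $t$; exactly as in the last paragraph of the proof of \cref{lem:Pi02SFTApprox}, I would compose with a sufficiently slow nondecreasing reindexing $s\colon \N \to \N$ to force these quantities to be $o(\log m)$ and the space to be $O(2^m)$, yielding \cref{it:window2} and \cref{it:space2}. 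Note that no analogue of the forbidden set or of the auxiliary subshift $Y_m$ from \cref{lem:Pi02SFTApprox} is needed, since the $\Delta^0_2$ hypothesis already provides two-sided control of $\lang(X)$.
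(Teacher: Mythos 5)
Your proposal is correct and follows essentially the same route as the paper's proof: both replace the $\Delta^0_2$ description by a convergent computable approximation of $\lang(X)$ (you invoke the limit lemma, the paper builds the approximation explicitly by racing the $\Pi^0_2$- and $\Sigma^0_2$-witnesses), both then select, at each stage, the largest width $p$ for which the SFT defined by the currently rejected words is mixing and has language agreeing with the current guess up to width $p$ — the condition that upgrades the $\limsup$ of \cref{lem:Pi02SFTApprox} to a genuine limit — and both use chain mixing to show this width tends to infinity before finishing with the same slow reindexing for \cref{it:window2} and \cref{it:space2}. The only caveat is the cosmetic edge case of rounds with no admissible pair, which should simply be skipped rather than assigned $p(t)=0$.
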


\begin{proof}
	Since $X$ is $\Delta^0_2$, there are two computable predicates $\phi_X^+$ and $\phi_X^-$ such that $\lang(X) = \{ w \in \A^* \mid \forall k\ \exists \ell \  \phi_X^+(w, k, \ell) \} = \{ w \in \A^* \mid \exists k\ \forall \ell \ \neg \phi_X^-(w, k, \ell) \}$.
  Consider the predicate $\phi_X(w, n)$ defined as follows.
  \begin{enumerate}
  \item
    Starting from $k = 0$, check for increasing $\ell \geq 0$ whether $\phi_X^+(w, k, \ell)$ holds, and whenever it does, increment $k$ and reset $\ell$ to $0$.
    Do this until $n$ pairs $(k, \ell)$ have been checked, and let $k_+$ be the final value of $k$.
  \item
    Do the same for $\phi_X^-$ in place of $\phi_X^+$, and let $k_-$ be the final value of $k$.
  \item
    Define $\phi_X(w, n)$ as the truth value of $k_+ > k_-$.
  \end{enumerate}
  Then for any $w \in \lang(X)$, $\phi_X(w, n)$ holds for all large enough $n$, while for $w \in \A^* \setminus \lang(X)$, $\neg \phi_X(w, n)$ holds for all large enough $n$.

  We describe an algorithm that is very similar to that of \cref{lem:Pi02SFTApprox}.
  It stores a finite memory $M \subset \A^*$, which is initially empty.
  It proceeds in rounds, with round $i$ consisting of the following steps.
  \begin{enumerate}
  \item
    Add a new word $w \in \A^*$ into $M$, in increasing order of length.
  \item
    Let $Q = \{ w \in M \mid \phi_X(w, i) \}$, $F = M \setminus Q$ and $n = \max \{|w| \mid w \in M\}$.
    For each $w \in Q$, do the following.
    If there exists $|w| \leq p \leq n$ such that the SFT $X_p$ defined by forbidding the words $F \cap \A^{\leq p}$ is mixing and satisfies $\lang_j(X_p) = Q \cap \A^j$ for each $j \leq p$ and $w \in \lang(X_p)$, choose the largest such $p$ and output $(X_p, w)$.
  \end{enumerate}

  We claim that the sequence $(X_m, w_m)_{m \in \N}$ produced by the algorithm satisfies \cref{it:capcupcap2}; the others follow as in \cref{lem:Pi02SFTApprox}.
  Given $k \geq 0$, let $i_0 \geq |\A|^k$ be so large that for all $v \in \A^{\leq k}$ and $i \geq i_0$, $\phi_X(v, i)$ holds if and only if $v \in \lang(X)$.
  Such an $i_0$ exists since $\phi_X(v, i)$ converges to the correct value for each $v \in \A^{\leq k}$ separately and $\A^{\leq k}$ is a finite set.
  Then the SFT forbidding $F \cap \A^{\leq k}$ is precisely the SFT approximation $\mathcal{S}_k(X)$, which is mixing by assumption.
  
  Suppose $i \geq i_0$ and consider an output $(X_p, w)$ produced on step 2 of the algorithm on round $i$.
  We have $n = \max \{|w| \mid w \in M\} \geq k$.
  If $|w| \leq k$, then $w \in Q$ implies $w \in \lang(X)$, and in this case $p \geq k$, since $|w| \leq p \leq n$, $p$ is chosen as large as possible, and $k$ is a valid choice.
  If $|w| > k$, then we have $p \geq k$ by definition.
  In either case, for each $j \leq k$ we have $\lang_j(X_p) = Q \cap \A^j$ by definition of $X_p$; which is equal to $\lang_j(\mathcal{S}_k(X)) = \lang_j(X)$ since $i \geq i_0$.
  Thus we have shown $\lang_j(X_p) = \lang_j(X)$ for all $0 \leq j \leq k$ and all pairs $(X_p, w)$ produced after round $i_0$.
  This implies $\lang(X) = \lim_{m \in \N} \lang(X_m)$.

  Consider then $k \geq 0$ and a word $w \in \lang_k(X)$.
  If $i \geq i_0$, then on step 2 of round $i$ of the algorithm, $w \in Q$ and $p = |w|$ is a valid choice for $w$.
  Hence $w = w_m$ for infinitely many $m$'s.
  On the other hand, for each $w \in \A^k \setminus \lang(X)$ we have $w \notin Q$ for all $i \geq i_0$.
  Hence $w = w_m$ for only finitely many $m$'s.
  This proves $\lang(X) = \bigcap_{M \in \N} \bigcup_{m \geq M} \{w_m\}$.
\end{proof}

For the next lemma, we recall some terminology from combinatorics on words.
A set $C \subset \A^*$ is a \emph{code}, if $c_1 \cdots c_m = c'_1 \cdots c'_n$ with $c_i, c'_i \in C$ implies $m = n$ and $c_i = c'_i$ for all $0 \leq i < m$.
A word $w \in \A^*$ is \emph{primitive} if $w = z^n$ implies $n = 1$.
The \emph{conjugates} of $w \in \A^n$ are the words $w_{[i, n-1]} w_{[0, i-1]}$ for $0 \leq i < n$, and $w$ is a \emph{Lyndon word} if it is primitive and lexicographically minimal among its conjugates.
Finally, $w$ is \emph{unbordered} if no prefix of $w$ is a suffix of $w$.

\begin{lemma}
  \label{lem:LeastPeriod}
  Let $X \subseteq \A^\Z$ be an infinite mixing SFT with window size and mixing distance $k$, and let $W \subset \lang(X)$ be finite.
  Denote $N = k(|W|-1) + \sum_{w \in W} |w|$.
  For any $n > 2N + 8k$, there exists a periodic configuration $x \in X$ with least period $n$ such that $w \in \lang(x)$ for all $w \in W$.
\end{lemma}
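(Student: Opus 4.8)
The plan is to realize $x$ as a purely periodic point $x = u^\infty$ for a carefully chosen word $u \in \A^n$, which reduces the statement to three requirements on $u$: that $u^\infty \in X$, that $w \sqsubset u$ for a single word $g \in \lang(X)$ that contains every $w \in W$, and that $u$ be \emph{primitive}. The last reduction rests on the elementary fact that the least period of $u^\infty$ divides $n$ and equals $n$ exactly when $u$ is not a proper power. For the first requirement I would use that, since $X$ has window size $k$ and $|u| = n > k$, one has $u^\infty \in X$ if and only if every factor of length $k$ of $uu$ lies in $\lang(X)$; it therefore suffices to control the finitely many junctions arising in the concatenation.

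Concretely, I would assemble $u = z\,c_1\,g\,c_2$ from four blocks, each of length at least $k$. First glue the words of $W$ together using bridges of length $k$ furnished by the mixing distance, obtaining $g \in \lang(X)$ with $w \sqsubset g$ for all $w \in W$ and $|g| \le N$ (possibly appending at most $k$ symbols so that $|g| \ge k$, which enlarges the constants only by an additive $O(k)$). Next take an \emph{unbordered} word $z \in \lang(X)$ of length $\lambda = n - |g| - 2k$; the hypothesis $n > 2N + 8k$ is exactly what guarantees $\lambda > n/2$. Finally, mixing provides bridges $c_1, c_2 \in \A^k$ with $z c_1 g \in \lang(X)$ and $g c_2 z \in \lang(X)$. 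Because every block has length $\ge k$, each length-$k$ factor of $uu$ meets at most two consecutive blocks, and each adjacent pair that occurs in $u^\infty$ — namely $(z,c_1)$, $(c_1,g)$, $(g,c_2)$, and the wrap-around $(c_2,z)$ — already appears inside one of the two legal words $z c_1 g$ or $g c_2 z$. Hence $u^\infty \in X$, and $g \sqsubset u$ yields $w \sqsubset u^\infty$ for every $w \in W$.

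It remains to check that $u$ is primitive, and here I rely on the length of $z$. Occurrences of an unbordered word cannot overlap: two occurrences at distance $d < |z|$ would force a common proper prefix and suffix of $z$, contradicting unborderedness, so consecutive occurrences lie at distance $\ge |z|$. In $u^\infty$ the set of positions carrying $z$ is invariant under translation by $n$; since the cyclic gaps between consecutive occurrences sum to $n$ and each is $\ge |z| > n/2$, there is exactly one occurrence per period (at least one, as $z \sqsubset u$). If some $p \mid n$ with $p < n$ were a period of $u^\infty$, the occurrence set would also be $p$-invariant, forcing the number of occurrences per period to be a multiple of $n/p \ge 2$ — impossible for a single occurrence. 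Thus the least period of $u^\infty$ is $n$, as required.

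The main obstacle is establishing the existence of the long unbordered word $z \in \lang(X)$ of the prescribed length $\lambda$. For this I would use that an infinite mixing SFT, presented as a graph on the vertex set $\lang_{k-1}(X)$ with edges labelled by $\lang_k(X)$, is strongly connected (by mixing) and is not a single cycle (by infiniteness), so some vertex carries two distinct return loops of a common length; labelling these by two symbols produces a pair of equal-length, freely concatenable words behaving like a binary alphabet inside $\lang(X)$. Substituting this pair into binary unbordered words (which exist in every length) yields unbordered words of $\lang(X)$, and the slack afforded by the $8k$ term — distributed between the two bridge lengths and the short padding of $g$ — provides enough freedom in $\lambda$ to land on a length that is actually realized. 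Verifying that the substituted word is genuinely unbordered (ruling out borders misaligned with the block structure) and pinning its length down exactly is the delicate point, and is where I expect the real work to lie.
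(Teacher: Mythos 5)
Your architecture coincides with the paper's: glue the words of $W$ into a single word $g \in \lang(X)$ of length $N$ using mixing bridges, insert a long unbordered word $z$ with $|z| > n/2$, close up the period with further bridges, and deduce that the least period is exactly $n$ because occurrences of an unbordered word cannot overlap, so there is exactly one per period. Your periodicity argument and the verification that $u^\infty \in X$ via the window size are both sound and match the paper in substance.

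The one genuine gap is exactly the point you flag yourself: the existence of an unbordered word of $\lang(X)$ of (approximately) the prescribed length. Your plan --- find two return loops of a common length, treat them as a binary alphabet, and substitute into binary unbordered words --- leaves unresolved precisely the hard part, namely that a border of the substituted word could be misaligned with the block structure, and this does not follow formally from unborderedness of the binary template. The paper closes this with a short combinatorial trick you are missing: take two distinct first returns $c, d$ to a vertex of the width-$k$ Rauzy graph (with $|c| \le k$ by the mixing distance and $|d| < 2k$; they are not powers of a common word because the set of first returns is a code), observe that $c^\ell d^\ell$ is primitive for $\ell \ge 2$ \cite[Theorem 9.2.4]{Lo97}, and then pass to the conjugate of $c^\ell d^\ell$ that is a Lyndon word, which is automatically unbordered \cite[Proposition 5.1.2]{Lo97}. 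Varying $\ell$ gives unbordered words whose lengths hit every window of width $2k$ above $2k$, and the paper absorbs the residual length mismatch into a variable-length gluing word $b \in \lang_{n-|uav|}(X)$ rather than demanding, as you do, that $|z|$ equal a prescribed value exactly. With that sub-lemma supplied, your proof goes through; without it, the argument is incomplete at its load-bearing step.
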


\begin{proof}
  We first prove that for each $m \geq 2k$, there exists an unbordered word $v \in \lang(X)$ with $m \leq |v| < m+2k$.
  Consider the width-$k$ Rauzy graph $G$ of $X$ with edge labels in $A$.
  Pick any vertex $p \in G$ and consider the set $C \subset \lang(X)$ of first returns from $p$ to itself, which is a code.
  Since $k$ is a mixing distance for $X$, there exists $c \in C$ with $|c| \leq k$.
  Since $X$ is infinite, there exists another first return $c' \in C$, which is either shorter than $c$, or satisfies $c'_i \neq c_i$ for some $0 \leq i < k$.
  In the first case we set $d = c'$, and in the latter we extend the prefix $c'_0 \cdots c'_i$ into a first return $d \in C$ with $|d| < 2 k$.
  As $C$ is a code, $c$ and $d$ are not powers of the same word.
  Then $c^\ell d^\ell \in \lang(X)$ is primitive for all $\ell \geq 2$ \cite[Theorem 9.2.4]{Lo97}, so one of its conjugates $v \in \lang(X)$ is a Lyndon word, hence unbordered by \cite[Proposition 5.1.2]{Lo97}.
  The claim on $|v|$ holds for $\ell = \lfloor m/|cd| \rfloor$.

  Denote $W = \{w_1, \ldots, w_{|W|}\}$.
  Since $k$ is a mixing distance for $X$, there exist gluing words $u_1, \ldots, u_{|W|-1} \in \lang_k(X)$ with $u = w_1 u_1 w_2 u_2 \cdots u_{|W|-1} w_{|W|} \in \lang_N(X)$.
  Let $n > 2N+8k$. Let $v \in \lang(X)$ be an unbordered word with $n-N-4k < |v| \leq n-N-2k$.
  As $k$ is also a window size for $X$, there exist gluing words $a \in \lang_k(X), b \in \lang_{n-|u a v|}(X)$ with $x = {}^\infty (u a v b)^\infty \in X$.
  Each $w \in W$ occurs in this configuration, since they occur in $u$.
  The least period of $x$ is $|u a v b| = n$, since $v$ is unbordered and $|v| > n/2$.
\end{proof}

The construction of the unbordered word $v$ in the above proof is essentially \cite[Lemma 2]{BePe09}.
We repeat it here, since we need finer control on the lengths of the words.

\begin{lemma}
  \label{lem:PeriodicSubwords}
  Let $n \geq 1$, and suppose that every word of length $2 n$ occurring in $x \in \A^\Z$ is $q$-periodic for some $1 \leq q \leq n$ (which might depend on the word).
  Then $x$ is $q$-periodic for some $1 \leq q \leq n$.
\end{lemma}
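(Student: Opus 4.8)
The plan is to prove the statement via the classical \emph{Fine and Wilf theorem}: if a finite word admits two periods $p$ and $q$ and has length at least $p + q - \gcd(p,q)$, then it admits $\gcd(p,q)$ as a period. Write $W_i = x_{[i,\, i+2n-1]}$ for the length-$2n$ window of $x$ starting at position $i$, and let $q_i \in \{1, \dots, n\}$ denote a period of $W_i$ furnished by the hypothesis. The goal will be reduced to showing that a \emph{single} integer $p \le n$ is a period of every window $W_i$ simultaneously; once this is done, for each $j \in \Z$ the window $W_j$ has period $p$, whence $x_j = x_{j+p}$ (as $p \le n \le 2n-1$), so $x$ itself is $p$-periodic.

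To produce this common period, first note that since the $q_i$ take values in the finite set $\{1, \dots, n\}$, the minimum $p := \min_{i \in \Z} q_i$ is attained, say at $W_{i_0}$. I would then propagate this minimal period to all neighbouring windows. Suppose $W_i$ has period $p$ and consider $W_{i+1}$; their overlap $x_{[i+1,\, i+2n-1]}$ has length $2n-1$ and carries both periods $p$ and $q_{i+1}$. Since $p, q_{i+1} \le n$ we have $p + q_{i+1} - \gcd(p, q_{i+1}) \le 2n - 1$, so Fine and Wilf applies on the overlap and yields the period $g = \gcd(p, q_{i+1})$ there. Because $g \le p$ and $p$ is the global minimum, once we know that $W_{i+1}$ inherits the period $g$ on all of its length, we must have $g = p$, forcing $p \mid q_{i+1}$ and giving $W_{i+1}$ the period $p$. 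The symmetric argument handles $W_{i-1}$, so by induction every $W_i$, $i \in \Z$, has period $p$, which is exactly what the reduction above requires.

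The one delicate point, and the step I expect to be the main obstacle, is upgrading the period $g$ from the overlap (of length $2n-1$) to the \emph{full} window $W_{i+1}$ (of length $2n$): Fine and Wilf speaks only about the overlap, so one extra symbol must be accounted for. I would isolate this as a small \emph{extension lemma}: if a word of length $L$ has period $q$ with $q \le L-1$ and its length-$(L-1)$ prefix has period $g \mid q$, then the whole word has period $g$. Its proof is a short index chase — the last symbol equals, via the period $q$, a symbol sitting inside the prefix, which in turn is linked to the required position by the period $g$ (the two positions differ by the multiple $q - g$ of $g$ and both lie within the prefix, where period $g$ is available). Here $L = 2n$, $q = q_{i+1} \le n \le 2n-1$, and $g \mid q_{i+1}$, so the hypotheses are met and the extension goes through; this is precisely where the bound $q_i \le n$ is used twice, both to apply Fine and Wilf on the overlap and to keep the period short enough for the extension to reach the last symbol.
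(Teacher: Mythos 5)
Your proof is correct and uses essentially the same argument as the paper: both rest on Fine and Wilf applied to a length-$(2n-1)$ overlap of length-$2n$ windows (the paper phrases this as an induction on the length of a growing subword $vwa$ with $vw$ periodic by induction and $wa$ by hypothesis, whereas you propagate a fixed minimal period along consecutive windows, but the key lemma and the one-symbol extension step are identical, and your isolation of that extension as a separate lemma is a fair reading of what the paper compresses into ``hence $u$ is also $\gcd(p,q)$-periodic''). One small imprecision: since your $p$ is the minimum of the \emph{chosen} periods $q_i$ rather than of the least periods of the windows, the claim that minimality forces $g = p$ does not literally follow; this is harmless, however, because $g \mid p$ already implies that the $g$-periodic window $W_{i+1}$ is $p$-periodic, which is all the propagation needs (or one can simply take each $q_i$ to be the least period of $W_i$ from the start).
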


\begin{proof}
  Let $i \in \Z$ and $k \geq 2n-1$.
  We prove by induction on $k$ that $u = x_{[i, i+k]}$ is $q$-periodic for some $1 \leq q \leq n$.
  The claim follows when we let $k$ grow and choose $i = -\lfloor k/2 \rfloor$.
  
  The case $k = 2n-1$ is true by assumption, so suppose $k \geq 2n$.
  Denote $u = v w a$, where $v \in \A^+$, $w \in \A^{2n-1}$ and $a \in \A$.
  Then $v w$ is $p$-periodic and $w a$ is $q$-periodic for some $p, q \leq n$.
  Recall the periodicity theorem of Fine and Wilf~\cite{FiWi65}: if a word has periods $p$ and $q$, and length at least $p+q-\gcd(p,q)$, then it also has $\gcd(p,q)$ as a period.
  The word $w$ satisfies the conditions, because $p+q-\gcd(p,q)$ is at worst $\max(p,q)$, so $w$ is $\gcd(p,q)$-periodic.
  Then $v w$ and $w a$, and hence $u$, are also $\gcd(p,q)$-periodic, which is what we claimed.
\end{proof}

\begin{lemma}
  \label{lem:GluingTimeComplexity}
  Fix a finite alphabet $\A$.
  Given the Rauzy graph of a mixing SFT $X \subset \A^\Z$ with window size and mixing distance $m$, and two words $u, w \in \lang_m(X)$, the time complexity of computing a gluing word $v \in \A^m$ such that $u v w \in \lang(X)$ is at most $\exp(O(m))$.
\end{lemma}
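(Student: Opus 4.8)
The plan is to reduce the problem to a reachability computation in the Rauzy graph $G$ of $X$. Recall that the width-$m$ Rauzy graph has as vertices the words of $\lang_m(X)$ (equivalently, by the window-size-$m$ assumption, the legal length-$m$ patterns), and an edge from $p$ to $p'$ precisely when $p$ and $p'$ overlap in $m-1$ symbols and their union is a legal word of length $m+1$. Since the alphabet $\A$ is fixed, the graph has at most $|\A|^m = \exp(O(m))$ vertices, and each vertex has at most $|\A|$ outgoing edges; thus the whole graph has size $\exp(O(m))$ and can be traversed in that time. The two given words $u, w \in \lang_m(X)$ are themselves vertices of $G$.

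First I would observe that because $m$ is a mixing distance for $X$, there exists a path of length exactly $m$ in $G$ from the vertex $u$ to the vertex $w$: by definition of chain mixing via the mixing distance, for the pair $(u,w)$ of words of equal length $m$ there is a gluing word of any sufficiently large length, and in particular one can be taken so that the concatenation $u v w$ lies in $\lang(X)$ with $|v| = m$. Reading such a concatenation $u v w$ off as a walk in $G$, the intermediate vertices trace out a directed path of the required length, so a suitable $v \in \A^m$ is guaranteed to exist. The task is then purely to \emph{find} one such $v$ efficiently.

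Next I would compute $v$ by a layered breadth-first search. I would build the set $R_0 = \{u\}$ and iteratively define $R_{t+1}$ as the set of vertices reachable in one edge step from $R_t$, recording for each newly reached vertex a back-pointer to a predecessor in $R_t$. After $m$ steps, I check whether $w \in R_m$; the existence argument above guarantees it is. Following the back-pointers from $w$ back to $u$ recovers a directed path of length $m$, and the sequence of edge labels (or equivalently the overlapping symbols of the intermediate vertices) spells out the desired gluing word $v$. Each layer $R_t$ has at most $\exp(O(m))$ vertices, each expanded using at most $|\A|$ edge lookups in the already-given graph, and we perform $m$ layers; the total work is $m \cdot \exp(O(m)) = \exp(O(m))$, as claimed.

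The only subtle point — and the one I would be most careful about — is matching the combinatorial notion of ``gluing word of length $m$'' with ``path of length $m$ in $G$'' so that the extracted label sequence has exactly length $m$ and the full concatenation $u v w$ is genuinely in $\lang(X)$ rather than merely locally legal. Because the window size of $X$ is also $m$, local legality on every length-$m$ window is equivalent to membership in $X$, so a walk in $G$ whose every length-$m$ subword is a vertex yields a word in $\lang(X)$ automatically; this is what removes any gap between the graph-theoretic search and the subshift-theoretic conclusion. Everything else is a routine bound on the size of the graph and the cost of breadth-first search, both of which are $\exp(O(m))$ because $|\A|$ is a fixed constant and the only exponential factor comes from the vertex count $|\A|^m$.
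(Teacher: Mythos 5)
Your overall strategy --- breadth-first search in the Rauzy graph, with the $\exp(O(m))$ bound coming from the vertex count $|\A|^m$ --- is the same as the paper's, but there is a concrete error in the correspondence between gluing words and paths. In the width-$m$ Rauzy graph, a walk of length $k$ from the vertex $u$ to the vertex $w$ spells a word of length $m+k$ that begins with $u$ and ends with $w$; hence the concatenation $uvw$ with $|u|=|v|=|w|=m$ corresponds to a path of length $2m$, not $m$. Your existence claim that $w \in R_m$ is therefore unjustified and in general false: a length-$m$ path from $u$ to $w$ spells a word of length $2m$, i.e.\ it asserts $uw \in \lang_{2m}(X)$ with an \emph{empty} gluing word, and a mixing distance of $m$ only guarantees gluing words of length $m$ (or more), not of length $0$. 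Relatedly, the extraction step is wrong even when such a path exists: the $m$ edge labels of a length-$m$ path ending at the vertex $w$ spell out $w$ itself (the word traced is exactly $uw$), so taking ``the sequence of edge labels'' as $v$ would yield $v=w$ and the unverified word $uww$. For a length-$2m$ path the correct $v$ is the vertex reached at step $m$, equivalently the first $m$ edge labels after leaving $u$.

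The fix is immediate and does not change the complexity: run your layered search to depth $2m$ and take $v$ to be the midpoint vertex of the recovered path, or, as the paper does, search forward from $u$ and backward from $w$ for $m$ layers each and pick any vertex in the intersection of the two depth-$m$ layers. Everything else in your write-up --- the $\exp(O(m))$ size bound on the graph and per-layer cost, and the observation that window size $m$ upgrades local legality of the walk to genuine membership in $\lang(X)$ --- is correct and matches the paper.
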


\begin{proof}
  The nodes of the Rauzy graph $G$ of $X$ are words of length $m$, so its size is at most $|\A|^m$.
  Computing $v$ amounts to finding a length-$2m$ path from $u$ to $v$ in $G$.
  We perform a breadth-first search, computing for each $i = 0, 1, \ldots, m$ the set of vertices $C_i \subset G$ that are reachable from $u$ in exactly $i$ steps, and the set $D_i$ of vertices from which $w$ is reachable in exactly $i$ steps.
  Since the in- and outdegree of each vertex of $G$ is at most $|\A|$, we have $|C_i|, |D_i| \leq |\A|^i$, and $C_i$ and $D_i$ can be computed in time $\poly(|\A|^m \cdot |\A|^i) = |\A|^{O(m)}$.
  We can choose any word in $C_m \cap D_m$ as $v$, and finding one takes another $|\A|^{O(m)}$ steps.
\end{proof}

\subsection{Walls, counters and conveyor belts}

The high-level structure of the CA $f$ is the same for both cases of \cref{thm:MixingRealization}.
We define the alphabet $\B$ of the CA $f$ as a set larger than $\A$, which is the alphabet of the subshift we want to realize. The alphabet $\B$ consists of three layers as listed in the construction of \cref{sec:gener-constr} (using the letter $\B$ in place of $\A$): the Main Layer $\B_{\mathrm{main}}$, the Computation Layer $\B_{\mathrm{comp}}$, and the Cleaning Layer $\B_{\mathrm{clean}}$.
To define it, let $\M$ be a Turing machine with state set $Q$, initial state $q_0 \in Q$, tape alphabet $\Gamma$, and blank tape symbol $\gamma_0 \in \Gamma$.
We will describe the behavior of $\M$ later on; for now, we only need to name its components in order to define the alphabet of $f$.

The Main Layer has alphabet $\B_{\mathrm{main}} = \{\I, \$, \W_\$\} \cup \{\W_a \mid a \in \A\} \cup \A$.
By default, symbols of the subset $\A_1 := \A \cup \{\$\}$ are continually shifted to the left.
The ``decorated'' wall symbols $\W_a$ for $a \in \A_1$ behave exactly like the $\W$-symbols of \cref{sec:gener-constr}, and the decorations allow us to shift the symbols of $\A_1$ through the walls.
This allows a segment to receive data from another segment on its right in order to determine whether they should merge.
We identify with $\A$ the states $b \in \B$ such that $\pi_{\mathrm{main}}(b) \in \A$ and whose other layers are blank.
They will be the only states visible in the generic limit set, allowing for the realization of the desired subshift $X \subset \A^\Z$.

The Computational Layer of $f$ consists of four sub-layers, called the Right Conveyor Belt Layer, Comparison Layer, Turing Machine Layer, and Clock Layer.
It also contains a blank symbol, which we denote by $\#$.
The layers are denoted $\B_{\mathrm{comp}} = \B_{\mathrm{belt}} \times \B_{\mathrm{cmpr}} \times \B_{\mathrm{TM}} \times \B_{\mathrm{clock}} \cup \{\#\}$.
The projection maps from the components of $\B_{\mathrm{comp}}$ are undefined on $\#$.
The sub-layers are defined as follows.
\begin{itemize}
\item
  The Right Conveyor Belt Layer $\B_{\mathrm{belt}} = \A_1$ contains symbols from $\A$ and blank symbols.
  By default, it is continually shifted to the right.
  Together with the Main Layer, it forms ``conveyor belts'' on which circular words over $\A_1$ are cyclically shifted.
\item
  The Comparison Layer $\B_{\mathrm{cmpr}} = \A_1$ also contains symbols from $\A$ and blanks.
  By default, it is continually shifted to the left.
\item
  The Turing Machine Layer $\B_{\mathrm{TM}} = Q \cup \Gamma$ is used to simulate the machine $\M$.
\item
  We use the Clock Layer $\B_{\mathrm{clock}} = \{0,1,2,3\}$ to implement a ternary counter that times certain actions of $\M$.
\end{itemize}

We define the CA $f$ over the course of the next few sections.
We begin by stating ``default behaviors'' of some of the layers, which may be overridden in special circumstances that we explicitly describe as such.
Let $x \in \B^\Z$ be arbitrary, and denote $y = f(x)$.
\begin{enumerate}
\item
  \label{item:rulemain}
  If $\pi_{\mathrm{main}}(x_1) = a \in \A_1$, or $\pi_{\mathrm{main}}(x_1) \in \W_{\A_1}$ and $\pi_{\mathrm{belt}}(x_0) = a \in \A_1$, or $\pi_{\mathrm{comp}}(x_0) = \#$ and $\pi_{\mathrm{main}}(x_1) = a \in \A_1$, then $\pi_{\mathrm{main}}(y_0) = a$.
  This means the $\A$-part of the Main Layer is generally shifted to the left.
  If the right neighbor of a cell is a wall, the data is instead copied from the Conveyor Belt Layer of the cell itself, onto the Main Layer of the same cell.
  Finally, if the cell has blank Computation Layer but its right neighbor does not, then the data is copied from the Conveyor Belt layer of that neighbor.
\item
  \label{item:ruleconv}
  If $\pi_{\mathrm{belt}}(x_{-1}) = a \in \A_1$, or $\pi_{\mathrm{comp}}(x_{-1}) = \#$ and $\pi_{\mathrm{main}}(x_0) = a \in \A_1$, then $\pi_{\mathrm{belt}}(y_0) = a$.
  This means the Conveyor Belt Layer is generally shifted to the right, and if the left neighbor of a cell has blank Computation Layer, the data is instead copied from the Main Layer of the cell itself, onto the Conveyor Belt Layer of the same cell.
\item
  \label{item:rulecomp}
  Suppose $\pi_{\mathrm{cmpr}}(x_0)$ is defined.
  If $\pi_{\mathrm{cmpr}}(x_1) = a \in \A_1$, or $\pi_{\mathrm{main}}(x_1) = \W_a$, then $\pi_{\mathrm{cmpr}}(y_0) = a$.
  This means the Comparison Layer is generally shifted to the left, and if the right neighbor of a cell is a wall, its decoration is copied instead.
\item
  \label{item:rulewall}
  Suppose $\pi_{\mathrm{main}}(x_0) \in \{\I\} \cup \W_{\A_1}$.
  If $\pi_{\mathrm{main}}(x_1) = a \in \A_1$, then $\pi_{\mathrm{main}}(y_0) = \W_a$.
  Otherwise $\pi_{\mathrm{main}}(y_0) = \W_\$$.
  This means all walls copy their decorations from the Main Layer of their right neighbor.
\item
  \label{item:ruleclock}
  
  If $\pi_{\mathrm{clock}}(x_{-1})$ is defined and not equal to $3$, then $\pi_{\mathrm{clock}}(y_0) = c'$, and otherwise $\pi_{\mathrm{clock}}(y_0) = c'+1$, where $3' = 0$ and $c' = c$ for $c \in \{0,1,2\}$.
\end{enumerate}

Items \ref{item:rulemain} and \ref{item:ruleconv} imply that if $x$ contains a length-$n$ contiguous run of cells whose Computational Layer is not blank, and which is bordered by a wall on the right and any symbol $b \in \B$ with $\pi_{\mathrm{comp}}(b) = \#$ on the left, then the Main and Conveyor Belt Layers of these cells hold a circular word $w \in \A_1^{2n}$ that $f$ continually rotates.
We call such a run of cells a \emph{conveyor belt}; each properly formatted segment will contain one.
From the last part of \cref{item:rulemain}, the symbols of $w$ are also copied on the Main Layer of the left bordering cell, which will thus receive a periodic sequence of symbols $w w w \cdots$.

Items \ref{item:rulecomp} and \ref{item:rulewall} imply that if a run of $\A_1$-cells 
on the Comparison Layer 
is bordered on the left by a wall 
, then that wall will capture the $\A_1$-symbols that are shifted toward it, and pass them to the Comparison Layers of the cells on its left.
This flow of information is depicted in Figure~\ref{fig:segments}.

The idea of item \ref{item:ruleclock} is that the Clock Layers of a finite run of cells encode a ternary counter that a single application of $f$ increments.
The least significant digit is the leftmost one, and the state $3$ denotes a $0$ that holds a carry. Carries propagate to the right.
The relevant property of the counter is the following.

\begin{lemma}
  \label{lem:BinaryCounter}
  Let $x \in \B^\Z$ be a configuration, and let $i \leq k \in \Z$ and $T \geq 0$ be such that for all $t \leq T$, $\pi_{\mathrm{clock}}(f^t(x_j)) \in \{0,1,2,3\}$ for each $i \leq j \leq k$, but $\pi_{\mathrm{clock}}(f^t(x_{i-1}))$ is undefined.
  Then the sequence $(\pi_{\mathrm{clock}}(f^t(x))_k)_{t = 0}^T$ is eventually periodic with transient part of length at most $k-i+1$ and eventual period of length $3^{k-i+1}$, and the state $3$ occurs exactly once every $3^{k-i+1}$ steps in the periodic tail.
\end{lemma}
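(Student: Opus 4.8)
The plan is to reduce the Clock Layer update rule to a transparent arithmetic recurrence and then induct on the cell index. Write $a_j(t) := \pi_{\mathrm{clock}}(f^t(x))_j$ for $i \le j \le k$ and $0 \le t \le T$; by hypothesis each such value lies in $\{0,1,2,3\}$ and the left neighbour $a_{i-1}$ is always undefined. The first step is to rewrite the clock rule uniformly as $a_j(t+1) = (a_j(t) \bmod 3) + d_j(t)$, where $d_j(t) \in \{0,1\}$ is the ``carry bit'' that equals $1$ exactly when cell $j$ is incremented. Indeed, the decoration $(c)'$ sends $3 \mapsto 0$ and fixes $\{0,1,2\}$, so it is precisely reduction modulo $3$; and the increment branch fires when the left neighbour is undefined or equal to $3$. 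Hence $d_i(t) = 1$ for all $t$ (the least significant digit always advances), while for $j > i$ we have $d_j(t) = 1$ if $a_{j-1}(t) = 3$ and $d_j(t) = 0$ otherwise.

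The key idea is then to track the residue $b_j(t) := a_j(t) \bmod 3 \in \{0,1,2\}$ rather than $a_j$ directly. The recurrence collapses to the clean mod-$3$ accumulator $b_j(t+1) = (b_j(t) + d_j(t)) \bmod 3$, together with the reconstruction $a_j(t+1) = b_j(t) + d_j(t)$. This substitution is exactly what linearises the carry dynamics and disposes of the complication that $3$ is a transient ``carry-holding zero''.

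With this in hand I induct on $j$ from $i$ to $k$, proving that $a_j$ is periodic for $t \ge j-i+1$ with period $3^{j-i+1}$ and that the symbol $3$ occurs exactly once per period in the tail. For the base case $j=i$ the carry signal is constant, $d_i \equiv 1$, so $b_i$ runs through $0,1,2$ with period $3$ and $a_i(t+1) = b_i(t)+1$ is periodic from $t \ge 1 = i-i+1$, with the value $3$ arising exactly when $b_i = 2$. For the inductive step, the hypothesis makes $d_j$ periodic from $t_0 := j-i$ with period $q := 3^{j-i}$ and exactly one $1$ per period; thus every length-$q$ window past $t_0$ carries a single increment, giving $b_j(t+q) = (b_j(t)+1)\bmod 3$ and hence $b_j(t+3q) = b_j(t)$ for $t \ge t_0$. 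Feeding this back through $a_j(t+1) = b_j(t)+d_j(t)$ (with $d_j$ of period $q \mid 3q$) shows $a_j$ is periodic with period $3q = 3^{j-i+1}$ from $t_0+1 = j-i+1$. Finally, the three increments within one period of $a_j$ meet the three consecutive residues $b_j \in \{0,1,2\}$, so exactly one of them, the one with $b_j = 2$, produces the value $3$. Taking $j=k$ yields the statement.

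The main obstacle I anticipate is the transient bound of exactly $k-i+1$: a naive analysis that tracks ``the value right after the first carry reaches cell $k$'' only controls the period and leaves a useless transient of order $3^{k-i}$. The residue substitution $b_j = a_j \bmod 3$ is precisely what resolves this, since $b_j$ inherits the periodicity of the carry signal with no extra delay, so the reconstruction $a_j(t+1) = b_j(t)+d_j(t)$ costs merely one additional time step per cell and the clean bound $j-i+1$ follows by induction. A minor point to handle with care is that every recurrence and periodicity relation is asserted only within the finite window $t \le T$; but since $a_j(t+1)$ depends solely on $a_{j-1}(t)$ and $a_j(t)$, all invocations remain inside the range where the hypotheses hold, so the finite sequence $(a_k(t))_{t=0}^T$ enjoys exactly the claimed eventual periodicity.
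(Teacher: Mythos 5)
Your proof is correct and follows essentially the same route as the paper: an induction on the cell index in which the digit at position $j$ is incremented exactly once per period of position $j-1$ (namely the step after that position shows a $3$), tripling the period and costing one extra transient step per cell. Your residue-and-carry bookkeeping ($b_j = a_j \bmod 3$, $d_j \in \{0,1\}$) is just a more explicit rendering of the same argument, which the paper states in three sentences.
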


\begin{proof}
  By induction on $k-i$.
  For $k = i$, the sequence has eventually periodic part $1, 2, 3, 1, 2, 3, 1, 2, 3, \ldots$ which is reached after at most $1 = k-i+1$ step (on which the state might be $0$).
  For $k > i$, we know the sequence of digits at position $k-1$ has eventual period of length $3^{k-i}$ and transient part of length at most $k-i$, and the state $3$ occurs at position $k-1$ exactly once in each period.
  The step after it does, the state at position $k$ is incremented by one, and if its new value is $3$, on the next step it resets to $0$. 
  On other time steps it retains its value.
  The claim follows.
\end{proof}

The idea of the construction is to write periodic configurations of (SFT approximations of) $X$ onto the conveyor belts, which in turn feed them to $\A_1$-regions.
All belts will eventually disappear from a generic configuration, leaving only the $\A_1$-regions whose contents approximate $X$ in the generic limit set of $f$.
The Comparison Layer captures this data through permeable walls, and the Turing Machine Layer analyzes it in order to control the merge process of segments by comparing the contents of two adjacent segments.

The Cleaning Layer $\B_{\mathrm{clean}}$ behaves exactly as in \cref{sec:gener-constr}, dividing the initial configuration into non-overlapping segments.
In particular, it retains the property that the outer signals $s_o$ erase all non-$s_o$ symbols they encounter, so that every segment initialized by $\I$-symbols is eventually fully formatted.
When an $\I$-symbol becomes a (decorated) wall on the first time step, it also produces a simulated head of the machine $\M$ in state $q_0$ on the Turing Machine Layer of its left neighbor.
In the following sections we describe how the machine performs computation and modifies the data on its segment.

\subsection{Computation of periodic points}

Under $f$, each formatted segment $S$ goes through four different stages, in the following order: \emph{computation stage}, \emph{waiting stage}, \emph{probe stage}, and \emph{merge stage}.
During the computation stage, the machine $\M$ computes and stores a periodic point of one of the SFTs given by \cref{lem:Pi02SFTApprox} or \cref{lem:Delta02SFTApprox}.
Once it is stored and continuously generated on the conveyor belt, the waiting stage begins. It lasts until the neighboring segment $S'$ on the right of $S$ has finished its computation stage.
In the probe stage, the machine reads and analyzes the periodic point stored by $S'$ to determine whether $S$ should merge with it.
Finally, in the merge stage the wall between the segments is erased and the periodic point of $S$ is glued to the one of $S'$.

The four stages are mostly controlled by the simulated Turing machine $\M$, which we now describe.
It differs from a standard Turing machine in several respects: we allow its head to move 0, 1 or 2 tape cells in one computation step, and to freely modify the contents of all cells in the vicinity of the simulated read-write head.
Even though the simulated head is always on a cell that has a non-blank Computational Layer, it can modify the states of nearby cells that do not, in order to extend its computational tape (but it will never create new heads).

Recall that a simulated machine $\M$ is initialized on the right end of every properly initialized segment.
We only describe the behavior of $\M$ in this context, as only the contents of properly initialized segments will be visible in the generic limit set of $f$ -- the rest is erased in finite time by $s_o$ and $s_i$ signals described in \cref{sec:gener-constr}, here through the Cleaning Layer.
Let thus $S$ be a properly formatted segment in the $f$-trajectory of a configuration.

First, the head of $\M$ travels to the left end of the segment $S$, extending the Computational Layer.
Then it measures the length $\ell$ of $S$, computes the largest power of two $2^m < \ell/2$, and erases the Computational Layer of the $\ell-2^m$ leftmost cells of $S$.
These cells will remain in $\A_1$-states from this point on, and the remaining $2^m$ cells of $S$ will have non-blank Computational Layers until the segment merges with another one on its right.
In particular, the machine $\M$ is now limited to $2^m = \Theta(\ell)$ tape cells.
We call $m$ the \emph{rank} of the segment $S$.
See \cref{fig:segments} for a diagram of the structure of formatted segments.

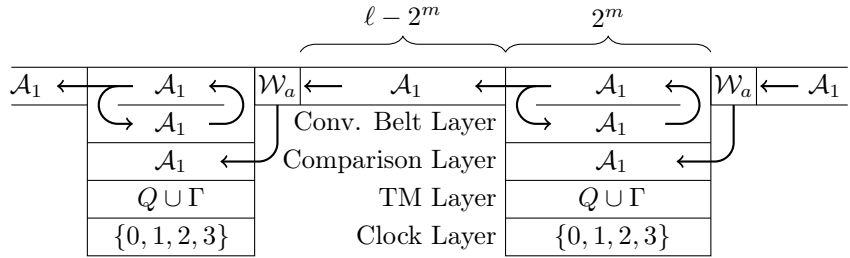
\begin{figure}[htp]
  \centering
  \begin{tikzpicture}[scale=1]

    
    \draw (3,0) -- (4,0);
    \draw (4.4,0) -- (5.8,0);
    \draw (6.2,0) -- (9.5,0);
    \draw (9.9,0) -- (11.8,0);
    \draw (12.2,0) -- (14,0);
    \draw (3,0.5) -- (14,0.5);

    \foreach \x in {
      5.6,11.6}{
      \draw [thick,->] (\x,-0.25) -- ++(0.2,0) arc (-90:90:0.25) -- ++(-0.2,0);
      \draw [thick,->] (\x+1.75,0.25) -- ++(-0.5,0);
      \draw [thick,->] (\x+0.9,0) -- ++(0,-0.5) arc (0:-90:0.25) -- ++(-0.5,0);
    }
    \foreach \x in {4.6,10.1}{
      \draw [thick,->] (\x-0.2,0.25) arc (90:270:0.25) -- ++(0.2,0);
      \draw [thick,->] (\x,0.25) -- ++(-1,0);
    }

    \foreach \x in {
      6,12}{
      \node at (\x+0.5,0.25) {$\W_a$};
      \draw (\x+0.2,0) -- +(0,0.5);
      \draw (\x+0.8,0) -- +(0,0.5);
    }

    \foreach \x/\xx in {
      4/6.2, 9.5/12.2}{
      \draw (\x,0.5) -- (\x,-2) -- (\xx,-2) -- (\xx,0.5);
      \draw (\x,-0.5) -- (\xx,-0.5);
      \draw (\x,-1) -- (\xx,-1);
      \draw (\x,-1.5) -- (\xx,-1.5);
    }

    \foreach \x/\y in {
      3.2/0.25,
      5.1/0.25, 5.1/-0.25, 5.1/-0.75,
      8.2/0.25,
      10.9/0.25, 10.9/-0.25, 10.9/-0.75,
      13.7/0.25
    }{
      \node at (\x,\y) {$\A_1$};
    }

    \foreach \x in {
      5.1, 10.9}{
      \node at (\x,-1.25) {$Q \cup \Gamma$};
      \node at (\x,-1.75) {$\{0,1,2,3\}$};
    }

    \node [left] at (9.5,-0.25) {Conv. Belt Layer};
    \node [left] at (9.5,-0.75) {Comparison Layer};
    \node [left] at (9.5,-1.25) {TM Layer};
    \node [left] at (9.5,-1.75) {Clock Layer};

    \draw [decorate,decoration={brace,amplitude=6pt}] (6.8,0.7) -- node [midway,above=6pt] {$\ell-2^m$} (9.5,0.7);
    \draw [decorate,decoration={brace,amplitude=6pt}] (9.5,0.7) -- node [midway,above=6pt] {$2^m$} (12.2,0.7);
    
  \end{tikzpicture}
  \caption{The anatomy of segments. Arrows indicate flow of information.}
  \label{fig:segments}
\end{figure}

Next, the machine $\M$ computes a word $u \in \A^{2^{m+1}}$ and stores it on the conveyor belt of the segment $S$.
The definition of $u$ is the first place where the two cases of the construction differ.
In the $\Pi^0_1 \subset \Pi^0_2$ case, $\M$ computes the triple $(X_m, Y_m, w_m)$ given by \cref{lem:Pi02SFTApprox}, which is doable in space $2^m$ if $\ell$ is large enough.
Here $X_m$ is a mixing SFT, $Y_m \subset X_m$ a nonempty SFT and $w_m \in \lang(X_m)$ a word of length $o(\log m)$.
The mixing distance and window size of $X_m$, and the window size of $Y_m$, are likewise $o(\log m)$.
Denote by $n_m$ the maximum of these numbers.
In the $\Delta^0_2$ case, $\M$ instead computes the pair $(X_m, w_m)$ given by \cref{lem:Delta02SFTApprox}, and we denote by $n_m = o(\log m)$ the maximum of the mixing distance and window size of $X_m$.
In both cases we may assume that the sequence $(n_m)_{m \in \N}$ is nondecreasing and $n_m \to \infty$ as $m \to \infty$.

By \cref{lem:LeastPeriod}, if $\ell$ is large enough, there exists a word $u \in \A^{2^{m+1}}$ such that the periodic configuration ${}^\infty u^\infty$ is in $X_m$, has least period $2^{m+1}$, contains an occurrence of $w_m$, and in the $\Pi^0_1 \subset \Pi^0_2$ case, contains an occurrence of some word $v_m \in \lang(Y_m)$ of length $n_m$. Indeed, we apply \cref{lem:LeastPeriod} to either $W = \{w_m\}$ or $W = \{w_m,v_m\}$, with $v_m$ having negligible length compared to $w_m$.

In the case where $\ell$ is not large enough for all of the above, we use $u = a^{2^{m+1}}$ for an arbitrary $a \in \A$ instead.

The machine $\M$ computes such a $u$ and writes it onto the conveyor belt.
This concludes the computation stage of $S$.

Under the CA $f$, the word $u$ is continually fed to the $\A_1$-cells on the left half of the segment $S$.
From this point on, these cells will always hold $\A$-states, that are $\A_1 \setminus \$$-states.

\subsection{Comparing periodic points}

When the machine $\M$ has finished writing the word $u \in \A^{2^{m+1}}$ onto the conveyor belt of its segment $S$, it initiates the waiting stage by traveling to the right end of $S$.
It waits there until the wall on its right stores an $\A$-state indicating that the segment $S'$ directly to the right of $S$ has finished its computation stage and stored some word $u' \in \A^*$ on its belt.
Once this happens, the segment $S$ enters the probe stage.

During the probe stage, the machine $\M$ will repeatedly capture a word occurring in the periodic point $x' = {}^\infty (u')^\infty$.
Note that on each time step, the Comparison Layer of $S$ now contains a length-$2^m$ subword of $x'$, which is continually shifted to the left and renewed through the wall between the two segments.
The machine $\M$ waits on the rightmost cell of $S$ until the Clock Layer of that cell contains a $3$.
We call this a \emph{clock signal}, and by \cref{lem:BinaryCounter}, it happens exactly once every $3^{2^m}$ time steps.
Then the machine repeatedly stores four adjacent symbols from the Comparison Layer onto a single cell of its computation tape, waits for three steps, and takes one step to the left.
Once it reaches the left end of the conveyor belt of $S$, its computation tape contains a word $v \in \A^{2^{m+2}}$ occurring in $x'$ (that is, $v$ is four times longer than the length of the computation tape).
This process is illustrated in \cref{fig:capture}.

\begin{figure}[htp]
  \centering
  \begin{tikzpicture}[xscale=1.3]

    \begin{scope}
      \draw (0,-0.5) -- (4.25,-0.5) -- (4.25,0.5);
      \draw (0,0.5) -- (8.25,0.5);
      \draw (0,1) -- (8.25,1);

      \node at (4,0.25) {$h$};
      \node at (4,0.75) {$a_0$};
      \node at (4.5,0.75) {$\W_{a_1}$};
      \node at (5,0.75) {$a_2$};
      \node at (5.5,0.75) {$a_3$};
      \node at (6,0.75) {$a_4$};
      \node at (6.5,0.75) {$a_5$};
      \node at (7,0.75) {$a_6$};
      \node at (7.5,0.75) {$a_7$};
      \node at (8,0.75) {$a_8$};
    \end{scope}

    \begin{scope}[yshift=2cm]
      \draw (0,-0.5) -- (4.25,-0.5) -- (4.25,0.5);
      \draw (0,0.5) -- (8.25,0.5);
      \draw (0,1) -- (8.25,1);

      \node at (4,0.25) {$h$};
      \node at (4,0) {\scriptsize $a_0 a_1$};
      \node at (4,-0.25) {\scriptsize $a_2 a_3$};
      \node at (3.5,0.75) {$a_0$};
      \node at (4,0.75) {$a_1$};
      \node at (4.5,0.75) {$\W_{a_2}$};
      \node at (5,0.75) {$a_3$};
      \node at (5.5,0.75) {$a_4$};
      \node at (6,0.75) {$a_5$};
      \node at (6.5,0.75) {$a_6$};
      \node at (7,0.75) {$a_7$};
      \node at (7.5,0.75) {$a_8$};
      \node at (8,0.75) {$a_9$};
    \end{scope}

    \begin{scope}[yshift=4cm]
      \draw (0,-0.5) -- (4.25,-0.5) -- (4.25,0.5);
      \draw (0,0.5) -- (8.25,0.5);
      \draw (0,1) -- (8.25,1);

      \node at (3.5,0.25) {$h$};
      \node at (4,0) {\scriptsize $a_0 a_1$};
      \node at (4,-0.25) {\scriptsize $a_2 a_3$};
      \node at (1.5,0.75) {$a_0$};
      \node at (2,0.75) {$a_1$};
      \node at (2.5,0.75) {$a_2$};
      \node at (3,0.75) {$a_3$};
      \node at (3.5,0.75) {$a_4$};
      \node at (4,0.75) {$a_5$};
      \node at (4.5,0.75) {$\W_{a_6}$};
      \node at (5,0.75) {$a_7$};
      \node at (5.5,0.75) {$a_8$};
      \node at (6,0.75) {$a_9$};
      \node at (6.5,0.75) {$a_{10}$};
      \node at (7,0.75) {$a_{11}$};
    \end{scope}

    \begin{scope}[yshift=6cm]
      \draw (0,-0.5) -- (4.25,-0.5) -- (4.25,0.5);
      \draw (0,0.5) -- (8.25,0.5);
      \draw (0,1) -- (8.25,1);

      \node at (3.5,0.25) {$h$};
      \node at (4,0) {\scriptsize $a_0 a_1$};
      \node at (4,-0.25) {\scriptsize $a_2 a_3$};
      \node at (3.5,0) {\scriptsize $a_4 a_5$};
      \node at (3.5,-0.25) {\scriptsize $a_6 a_7$};
      \node at (1,0.75) {$a_0$};
      \node at (1.5,0.75) {$a_1$};
      \node at (2,0.75) {$a_2$};
      \node at (2.5,0.75) {$a_3$};
      \node at (3,0.75) {$a_4$};
      \node at (3.5,0.75) {$a_5$};
      \node at (4,0.75) {$a_6$};
      \node at (4.5,0.75) {$\W_{a_7}$};
      \node at (5,0.75) {$a_8$};
      \node at (5.5,0.75) {$a_9$};
      \node at (6,0.75) {$a_{10}$};
      \node at (6.5,0.75) {$a_{11}$};
      \node at (7,0.75) {$a_{12}$};
    \end{scope}

  \end{tikzpicture}
  \caption{Capturing a word from the Comparison Layer. Time increases upward, possibly several steps at a time. Irrelevant layers and symbols are not shown. The letter $h$ represents the head of the Turing Machine.}
  \label{fig:capture}
\end{figure}
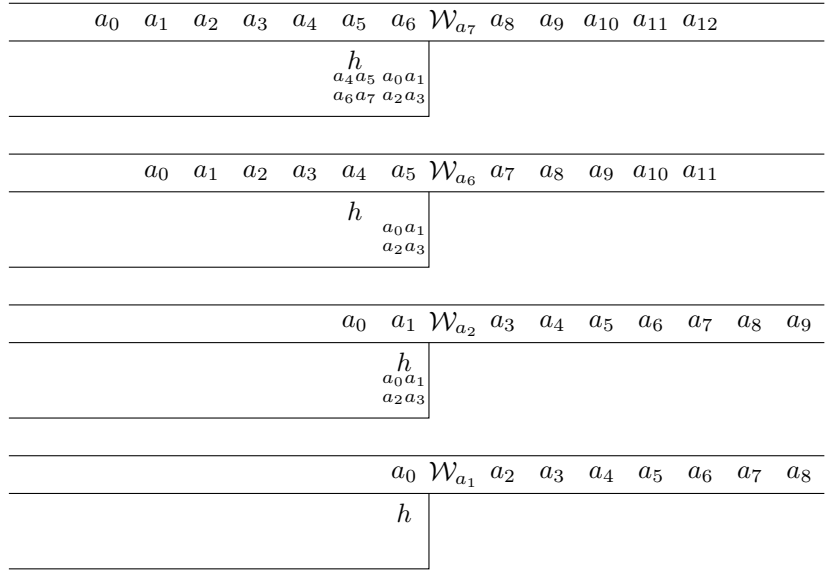

The machine $\M$ then checks whether the word $v$ is $q$-periodic for some $q \leq 2^{m+1}$.
If $v$ is not $q$-periodic for any $q \leq 2^{m+1}$, we say $\M$ has \emph{detected a merge candidate}.
The idea is that we want to merge $S$ with the segment $S'$ only if $S'$ has strictly higher rank, and detecting a merge candidate is evidence of this, since -- with the exception of ``false positives'' mentioned later -- having a larger period for the word in $S'$ means its conveyor belt itself was larger.
Detecting one merge candidate is not enough: once $\M$ has performed this analysis, it erases $v$ from its tape and starts over, waiting on the right end of the segment $S$ for another clock signal.
If $\ell$ is large enough, the capture, analysis and erasure of $v$ takes less than $3^{2^m}$ computation steps, and by handling short segments separately (using specific local rules with big enough radius), we may assume this is the case for all $\ell$.
Thus $\M$ can start capturing a new word every time the clock signal occurs.
The capturing process repeats until $\M$ has detected $m$ merge candidates in total (not necessarily consecutively), after which $S$ enters the merge stage.
The reason for this is that if $S'$ is produced by two short segments (of rank at most $m$) merging, right after this merge its Main Layer consists of two long periodic words separated by a short period breaker word (see the merge process in \cref{subsec:merge}).
The machine $\M$ will detect at most $m-1$ such ``false positive'' merge candidates in the worst case, see 
\cref{lem:MergeIsCorrect}.

\begin{lemma}
  \label{lem:EventualComparison}
  Suppose the segment $S'$ to the right of $S$ has rank $m' > m$.
  Eventually either $S$ enters the merge stage or $S'$ merges with another segment on its right.
\end{lemma}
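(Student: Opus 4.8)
The plan is to establish the stated dichotomy by proving its contrapositive form: I assume that $S'$ never merges with a segment on its right, and show that $S$ must then enter the merge stage. Under this assumption $S'$ keeps its rank $m'$ indefinitely, so its conveyor belt forever regenerates the same periodic configuration $x' = {}^\infty (u')^\infty$, where $u' \in \A^{2^{m'+1}}$ is the word it produced during its computation stage. By the construction based on \cref{lem:LeastPeriod}, $x'$ has least period $P = 2^{m'+1}$, and since $m' > m$ we have $P \ge 2^{m+2} > 2^{m+1}$. (If $S'$ is one of the degenerate short segments whose stored word is constant, the conclusion is instead guaranteed by the special local rules used for short segments, so I treat only the generic case here.)

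First I would locate a phase of $x'$ at which a capture yields a merge candidate. Applying the contrapositive of \cref{lem:PeriodicSubwords} with $n = 2^{m+1}$: as $x'$ has least period $P > n$, it is not $q$-periodic for any $q \le n$, so some window of length $2n = 2^{m+2}$ occurring in $x'$ fails to be $q$-periodic for every $q \le 2^{m+1}$. Fix a starting position $j^* \in \Z/P\Z$ of such a window. By the very definition of a merge candidate, whenever $\M$ captures the length-$2^{m+2}$ window of $x'$ starting at phase $j^*$, it detects one.

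The heart of the argument is to show that $\M$ captures precisely this window often enough. In the probe stage $\M$ performs one capture per clock signal, and by \cref{lem:BinaryCounter} the clock signal at the rightmost cell of $S$ recurs exactly once every $3^{2^m}$ steps once the ternary counter has become periodic. Throughout, the Comparison Layer of $S$ receives the symbols of $x'$ through the permeable wall, shifted left by one cell per step (the default rules of \cref{item:rulecomp} and \cref{item:rulewall}); since the surrounding local dynamics of $S$ is itself $3^{2^m}$-periodic, the window captured at the $k$-th clock signal is the length-$2^{m+2}$ window of $x'$ starting at phase $j_0 + k\delta \bmod P$, where $\delta \equiv 3^{2^m} \pmod P$. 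As $3^{2^m}$ is odd and $P = 2^{m'+1}$ is a power of two, $\delta$ is a unit in $\Z/P\Z$, so the phases $j_0 + k\delta$ run through every residue modulo $P$ and in particular equal $j^*$ at least once in every block of $P$ consecutive captures. Thus, while $S$ remains in the probe stage, $\M$ detects a merge candidate at least once every $P$ captures; since only $m$ of them are needed to pass to the merge stage, $S$ enters it after at most $mP$ captures. This establishes that at least one of the two stated events occurs.

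The step I expect to be most delicate is the phase bookkeeping of the previous paragraph. One must check that the capture mechanism of \cref{fig:capture} reads a genuinely contiguous length-$2^{m+2}$ subword of $x'$ whose starting phase is a fixed function of the clock-signal time, and that neither the transient of the counter nor the duration of a single capture (which fits inside one clock period for large $\ell$, as arranged in the construction) perturbs the constant inter-capture offset $\delta$; once this is confirmed, the coprimality of $\delta$ and $P$ delivers the full phase coverage that forces the non-periodic window to reappear.
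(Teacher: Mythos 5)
Your proposal is correct and follows essentially the same route as the paper's proof: both identify a length-$2^{m+2}$ window of $x'$ that is not $q$-periodic for any $q \le 2^{m+1}$ via \cref{lem:PeriodicSubwords}, and both use the coprimality of the clock period $3^{2^m}$ with the least period $2^{m'+1}$ of $x'$ to conclude that every phase, hence that window, is eventually captured. The only cosmetic differences are your explicit contrapositive framing and the quantitative bound of at most $mP$ captures, neither of which changes the argument.
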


\begin{proof}
  Let $u' \in \lang_{2^{m'+1}}(X_{m'})$ be the word stored on the conveyor belt of $S'$.
  By construction, the least period of the periodic point $x' = {}^\infty (u')^\infty \in X_{m'}$ is $2^{m'+1}$.
  The symbols of $x$ are shifted to the left on the Main Layer of $S'$, then through the wall separating $S$ and $S'$ onto the Comparison Layer of $S$.
  At each large enough time step $t$, if the segment $S'$ has not yet merged with another segment on its right, the rightmost symbol of the Comparison Layer of $S$ equals $x'_{i+t}$ for some initial offset $i$.
  The clock signal of $S$ arrives at time steps $t = j + n 3^{2^m}$ for $n \in \N$ and some initial offset $j$, at which point the machine $\M$ simulated in $S$ starts capturing a word of length $2^{m+2}$, which thus equals $v(n) := x'_{[i+j + n 3^{2^m}, i+j + n 3^{2^m} + 2^{m+2}-1]}$.
  Since $\gcd(3^{2^m}, 2^{m'+1}) = 1$, we have $\{v(n) \mid n \in \N\} = \{x'_{[n, n+2^{m+2}-1]} \mid n \in \Z\}$.
  If all of these subwords are periodic with period at most $2^{m+1}$, then so is $x'$ by Lemma~\ref{lem:PeriodicSubwords}, contradicting its construction.
  Hence at least one of the $v(n)$ is not periodic with a small period.
  When $\M$ captures this word, it detects a merge candidate, and when it has done so $m$ times, $S$ enters the merge stage.
\end{proof}

\subsection{Merging segments}
\label{subsec:merge}

We now describe the merge stage of the segment $S$.
Here the two cases differ more substantially.
Recall that $n_m = o(\log m)$ is a window size for $X_m$ and $Y_m$, and a mixing distance for $X_m$.


In the $\Delta^0_2$ case, $\M$ captures a word $u \in \A^{n_m}$ from the Main Layer of $S$ onto its computation tape, then rewrites the $n_m$ symbols to the right of $u$ on the Comparison Layer with $\$$-symbols, and finally captures another word $w \in \A^{n_m}$ from the Comparison Layer that occurs after the rewritten symbols.
This process is controlled by some auxiliary markings $\M$ placed at the beginning of the merge stage; we omit the exact implementation details.
See \cref{fig:merge} for an illustration.
We use the $\$$-symbols to mark the cells between $u$ and $w$ so that $\M$ can find them later; recall that the Main and Comparison Layers are continually shifted to the left by $f$.

In the $\Pi^0_1 \subset \Pi^0_2$ case, $\M$ waits for a clock signal before capturing the words $u$ and $w$.
Then it checks whether $w \in \lang(Y_m)$, which takes $\exp(O(n_m)) = m^{o(m)}$ computation steps.
If this is not the case, then $\M$ erases the words $u$ and $w$ from its tape, waits for another clock signal, and repeats the capturing process.

\begin{figure}[htp]
  \centering
  \begin{tikzpicture}[xscale=1.3]

    \begin{scope}
      \draw (1,0.5) -- (10,0.5);
      \draw (1,0) -- (7,0);
      \draw (7.5,0) -- (10,0);
      \draw (1,-0.5) -- (7.5,-0.5);
      \draw (1,-1) -- (7.5,-1);
      \draw (1,-2) -- (7.5,-2) -- (7.5,0);

      \node at (7.25,0.25) {$u_0$};
      \node at (7.25,-0.25) {$u_1$};
      \node at (6.75,-0.25) {$u_2$};
      \node at (6.25,-0.25) {$u_3$};

      \node at (7.75,0.25) {$\W$};
      

      \node at (7.25,-1.25) {$h$};

      \node at (6.75,-1.75) {$\bullet$};
      \node at (4.25,-1.75) {$\bullet$};
      \node at (4.75,-1.75) {$\bullet$};
    \end{scope}

    \begin{scope}[yshift=3.5cm]
      \draw (1,0.5) -- (10,0.5);
      \draw (1,0) -- (7,0);
      \draw (7.5,0) -- (10,0);
      \draw (1,-0.5) -- (7.5,-0.5);
      \draw (1,-1) -- (7.5,-1);
      \draw (1,-2) -- (7.5,-2) -- (7.5,0);

      \node at (5.25,0.25) {$u_0$};
      \node at (5.75,0.25) {$u_1$};
      \node at (6.25,0.25) {$u_2$};
      \node at (6.75,0.25) {$u_3$};

      \node at (7.75,0.25) {$\W$};
      
      \node at (9.25,0.25) {$w_0$};
      \node at (9.75,0.25) {$w_1$};

      \node at (6.75,-1.25) {$h$};
      \node at (7.25,-1.75) {\scriptsize $u_0 u_1$};
      \node at (6.75,-1.75) {\scriptsize $u_2 u_3$};
      \node at (4.25,-1.75) {$\bullet$};
      \node at (4.75,-1.75) {$\bullet$};
    \end{scope}

    \begin{scope}[yshift=7cm]
      \draw (1,0.5) -- (10,0.5);
      \draw (1,0) -- (7,0);
      \draw (7.5,0) -- (10,0);
      \draw (1,-0.5) -- (7.5,-0.5);
      \draw (1,-1) -- (7.5,-1);
      \draw (1,-2) -- (7.5,-2) -- (7.5,0);

      \node at (1.25,0.25) {$u_1$};
      \node at (1.75,0.25) {$u_2$};
      \node at (2.25,0.25) {$u_3$};

      \node at (7.75,0.25) {$\W$};

      \node at (2.75,-0.75) {$\$$};
      \node at (3.25,-0.75) {$\$$};
      \node at (3.75,-0.75) {$\$$};
      \node at (4.25,-0.75) {$\$$};
      
      \node at (4.75,-0.75) {$w_0$};
      \node at (5.25,-0.75) {$w_1$};
      \node at (5.75,-0.75) {$w_2$};
      \node at (6.25,-0.75) {$w_3$};

      \node at (4.75,-1.25) {$h$};
      \node at (7.25,-1.75) {\scriptsize $u_0 u_1$};
      \node at (6.75,-1.75) {\scriptsize $u_2 u_3$};
      \node at (4.25,-1.75) {$\bullet$};
    \end{scope}

    \begin{scope}[yshift=10.5cm]
      \draw (1,0.5) -- (10,0.5);
      \draw (1,0) -- (7,0);
      \draw (7.5,0) -- (10,0);
      \draw (1,-0.5) -- (7.5,-0.5);
      \draw (1,-1) -- (7.5,-1);
      \draw (1,-2) -- (7.5,-2) -- (7.5,0);


      \node at (7.75,0.25) {$\W$};

      \node at (1.25,-0.75) {$\$$};
      \node at (1.75,-0.75) {$\$$};
      \node at (2.25,-0.75) {$\$$};
      
      \node at (2.75,-0.75) {$w_0$};
      \node at (3.25,-0.75) {$w_1$};
      \node at (3.75,-0.75) {$w_2$};
      \node at (4.25,-0.75) {$w_3$};

      \node at (4.25,-1.25) {$h$};
      \node at (7.25,-1.75) {\scriptsize $u_0 u_1$};
      \node at (6.75,-1.75) {\scriptsize $u_2 u_3$};
      \node at (4.75,-1.75) {\scriptsize $w_0 w_1$};
      \node at (4.25,-1.75) {\scriptsize $w_2 w_3$};
    \end{scope}

  \end{tikzpicture}
  \caption{Capturing words at the beginning of the merge stage, illustrated with $n_m = 4$. Time increases upward several steps at a time. Irrelevant symbols and layers are not shown. The letter $h$ represents the head of the Turing Machine. The dots are the auxiliary markings $\M$ has placed beforehand.}
  \label{fig:merge}
\end{figure}
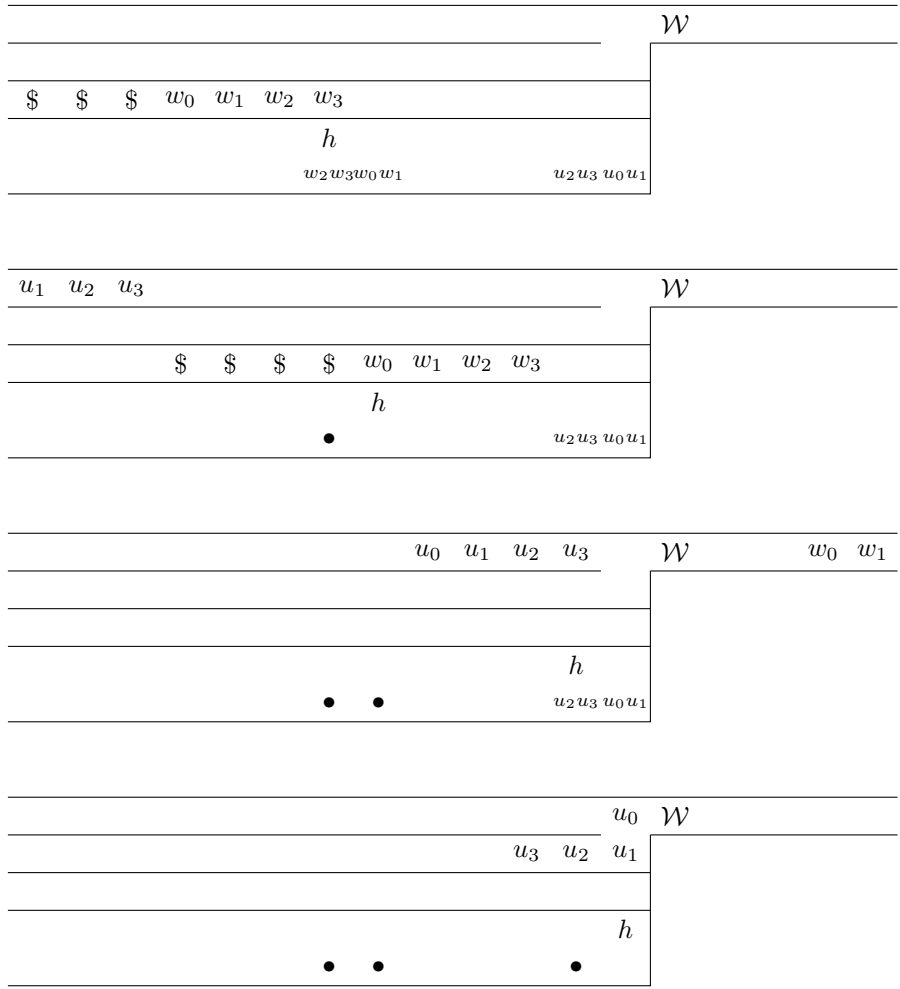

Next (immediately after capturing $u$ and $w$ in the $\Delta^0_2$ case, and as soon as $w \in \lang(Y_m)$ in the $\Pi^0_1 \subset \Pi^0_2$ case), $\M$ computes a \emph{merge gluing word} $v \in \A^{n_m}$ as follows.
In the $\Pi^0_1 \subset \Pi^0_2$ case, we simply require that $u v w \in \lang(X_m)$.
Such a word exists since $u \in \lang(X_m)$ and $w \in \lang(Y_m) \subset \lang(X_m)$, and $n_m$ is a mixing distance for $X_m$.
By \cref{lem:GluingTimeComplexity}, $\M$ can compute $v$ in $\exp(O(n_m)) = m^{o(m)}$ steps.
In the $\Delta^0_2$ case, $\M$ computes the largest integer $0 \leq d \leq n_m$ such that the length-$d$ prefix $w_{[0, d-1]}$ occurs in the SFT approximation $X_{d,m} := \mathcal{S}_d(X_m)$.
Note that each $X_{d,m}$ is also mixing with mixing distance $n_m$.
Then it finds a $v$ such that $u v w_{[0, d-1]} \in \lang(X_{d,m})$, again in $m^{o(m)}$ steps.

The rest of the merge process is identical for the two cases.
The machine modifies the conveyor belt of $S$ by replacing the $\$$-symbols on the Comparison Layer with the symbols of $v$, one by one.
As the $\$$-symbols are now within distance $m^{o(m)}$ from the right end of $S$ and traveling left with constant speed, the $i$th symbol takes $\exp(O(i)) \cdot m^{o(m)}$ steps to replace, for a total of $\exp(m^{o(m)})$ steps.
For large enough $m$ we have $\exp(m^{o(m)}) \ll 2^m$, so there is enough time for $\M$ to perform these operations before the $\$$-symbols reach the left end of the conveyor belt of $S$.
By handling short conveyor belts separately, we may assume this applies to all segments.
After this, the Main Layer of the conveyor belt contains a word of the form $a u v b$, and the Comparison Layer contains $c w d$, such that $|a u v| = |c|$.
We may arrange the copying process so that the simulated head of $\M$ ends up on top of the leftmost symbol of $v$.

Next, $\M$ travels left at speed 1 together with the Main and Comparison Layers.
When it hits the left end of the conveyor belt, it turns back to the right and erases the Computation Layer of the segment $S$; it also rewrites the Main Layer with the contents of the Comparison Layer. Consequently, $auvwd$ ends up printed on the Main Layer, and the Computation Layer shrinks by ``retracting'' to the right of its segment.
This process is illustrated in \cref{fig:merge2}.
When $\M$ reaches the right end of $S$, it erases itself and replaces the wall $\W_a$ with its decoration $a$ as well. 
In this way, the segment $S$ merges with its neighbor $S'$ into one longer segment whose conveyor belt is identical to that of $S'$.
This concludes the definition of $f$.

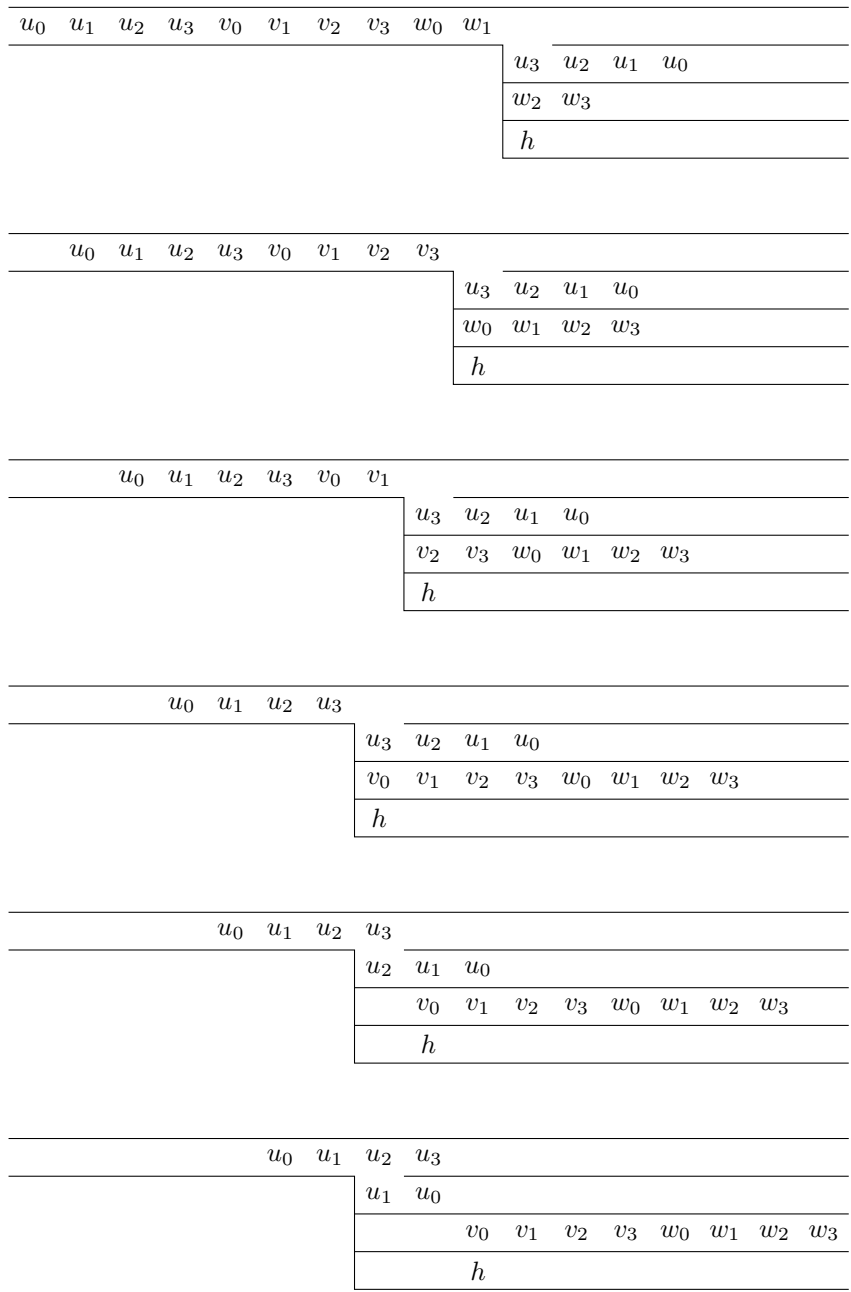
\begin{figure}[htp]
  \centering
  \begin{tikzpicture}[xscale=1.3]

    \begin{scope}
      \draw (2.5,0.5) -- (11,0.5);
      \draw (2.5,0) -- (6,0) -- (6,-1.5) -- (11,-1.5);
      \draw (6.5,0) -- (11,0);
      \draw (6,-0.5) -- (11,-0.5);
      \draw (6,-1) -- (11,-1);

      \node at (5.25,0.25) {$u_0$};
      \node at (5.75,0.25) {$u_1$};
      \node at (6.25,0.25) {$u_2$};
      \node at (6.75,0.25) {$u_3$};

      \node at (6.25,-0.25) {$u_1$};
      \node at (6.75,-0.25) {$u_0$};

      \node at (7.25,-0.75) {$v_0$};
      \node at (7.75,-0.75) {$v_1$};
      \node at (8.25,-0.75) {$v_2$};
      \node at (8.75,-0.75) {$v_3$};

      \node at (9.25,-0.75) {$w_0$};
      \node at (9.75,-0.75) {$w_1$};
      \node at (10.25,-0.75) {$w_2$};
      \node at (10.75,-0.75) {$w_3$};

      \node at (7.25,-1.25) {$h$};
    \end{scope}

    \begin{scope}[yshift=3cm]
      \draw (2.5,0.5) -- (11,0.5);
      \draw (2.5,0) -- (6,0) -- (6,-1.5) -- (11,-1.5);
      \draw (6.5,0) -- (11,0);
      \draw (6,-0.5) -- (11,-0.5);
      \draw (6,-1) -- (11,-1);

      \node at (4.75,0.25) {$u_0$};
      \node at (5.25,0.25) {$u_1$};
      \node at (5.75,0.25) {$u_2$};
      \node at (6.25,0.25) {$u_3$};

      \node at (6.25,-0.25) {$u_2$};
      \node at (6.75,-0.25) {$u_1$};
      \node at (7.25,-0.25) {$u_0$};

      \node at (6.75,-0.75) {$v_0$};
      \node at (7.25,-0.75) {$v_1$};
      \node at (7.75,-0.75) {$v_2$};
      \node at (8.25,-0.75) {$v_3$};

      \node at (8.75,-0.75) {$w_0$};
      \node at (9.25,-0.75) {$w_1$};
      \node at (9.75,-0.75) {$w_2$};
      \node at (10.25,-0.75) {$w_3$};

      \node at (6.75,-1.25) {$h$};
    \end{scope}

    \begin{scope}[yshift=6cm]
      \draw (2.5,0.5) -- (11,0.5);
      \draw (2.5,0) -- (6,0) -- (6,-1.5) -- (11,-1.5);
      \draw (6.5,0) -- (11,0);
      \draw (6,-0.5) -- (11,-0.5);
      \draw (6,-1) -- (11,-1);

      \node at (4.25,0.25) {$u_0$};
      \node at (4.75,0.25) {$u_1$};
      \node at (5.25,0.25) {$u_2$};
      \node at (5.75,0.25) {$u_3$};

      \node at (6.25,-0.25) {$u_3$};
      \node at (6.75,-0.25) {$u_2$};
      \node at (7.25,-0.25) {$u_1$};
      \node at (7.75,-0.25) {$u_0$};

      \node at (6.25,-0.75) {$v_0$};
      \node at (6.75,-0.75) {$v_1$};
      \node at (7.25,-0.75) {$v_2$};
      \node at (7.75,-0.75) {$v_3$};

      \node at (8.25,-0.75) {$w_0$};
      \node at (8.75,-0.75) {$w_1$};
      \node at (9.25,-0.75) {$w_2$};
      \node at (9.75,-0.75) {$w_3$};

      \node at (6.25,-1.25) {$h$};
    \end{scope}

    \begin{scope}[yshift=9cm]
      \draw (2.5,0.5) -- (11,0.5);
      \draw (2.5,0) -- (6.5,0) -- (6.5,-1.5) -- (11,-1.5);
      \draw (7,0) -- (11,0);
      \draw (6.5,-0.5) -- (11,-0.5);
      \draw (6.5,-1) -- (11,-1);

      \node at (3.75,0.25) {$u_0$};
      \node at (4.25,0.25) {$u_1$};
      \node at (4.75,0.25) {$u_2$};
      \node at (5.25,0.25) {$u_3$};

      \node at (6.75,-0.25) {$u_3$};
      \node at (7.25,-0.25) {$u_2$};
      \node at (7.75,-0.25) {$u_1$};
      \node at (8.25,-0.25) {$u_0$};

      \node at (5.75,0.25) {$v_0$};
      \node at (6.25,0.25) {$v_1$};
      \node at (6.75,-0.75) {$v_2$};
      \node at (7.25,-0.75) {$v_3$};

      \node at (7.75,-0.75) {$w_0$};
      \node at (8.25,-0.75) {$w_1$};
      \node at (8.75,-0.75) {$w_2$};
      \node at (9.25,-0.75) {$w_3$};

      \node at (6.75,-1.25) {$h$};
    \end{scope}

    \begin{scope}[yshift=12cm]
      \draw (2.5,0.5) -- (11,0.5);
      \draw (2.5,0) -- (7,0) -- (7,-1.5) -- (11,-1.5);
      \draw (7.5,0) -- (11,0);
      \draw (7,-0.5) -- (11,-0.5);
      \draw (7,-1) -- (11,-1);

      \node at (3.25,0.25) {$u_0$};
      \node at (3.75,0.25) {$u_1$};
      \node at (4.25,0.25) {$u_2$};
      \node at (4.75,0.25) {$u_3$};

      \node at (7.25,-0.25) {$u_3$};
      \node at (7.75,-0.25) {$u_2$};
      \node at (8.25,-0.25) {$u_1$};
      \node at (8.75,-0.25) {$u_0$};

      \node at (5.25,0.25) {$v_0$};
      \node at (5.75,0.25) {$v_1$};
      \node at (6.25,0.25) {$v_2$};
      \node at (6.75,0.25) {$v_3$};

      \node at (7.25,-0.75) {$w_0$};
      \node at (7.75,-0.75) {$w_1$};
      \node at (8.25,-0.75) {$w_2$};
      \node at (8.75,-0.75) {$w_3$};

      \node at (7.25,-1.25) {$h$};
    \end{scope}

    \begin{scope}[yshift=15cm]
      \draw (2.5,0.5) -- (11,0.5);
      \draw (2.5,0) -- (7.5,0) -- (7.5,-1.5) -- (11,-1.5);
      \draw (8,0) -- (11,0);
      \draw (7.5,-0.5) -- (11,-0.5);
      \draw (7.5,-1) -- (11,-1);

      \node at (2.75,0.25) {$u_0$};
      \node at (3.25,0.25) {$u_1$};
      \node at (3.75,0.25) {$u_2$};
      \node at (4.25,0.25) {$u_3$};

      \node at (7.75,-0.25) {$u_3$};
      \node at (8.25,-0.25) {$u_2$};
      \node at (8.75,-0.25) {$u_1$};
      \node at (9.25,-0.25) {$u_0$};

      \node at (4.75,0.25) {$v_0$};
      \node at (5.25,0.25) {$v_1$};
      \node at (5.75,0.25) {$v_2$};
      \node at (6.25,0.25) {$v_3$};

      \node at (6.75,0.25) {$w_0$};
      \node at (7.25,0.25) {$w_1$};
      \node at (7.75,-0.75) {$w_2$};
      \node at (8.25,-0.75) {$w_3$};

      \node at (7.75,-1.25) {$h$};
    \end{scope}
    
  \end{tikzpicture}
  \caption{Erasing the Computation Layer, illustrated with $n_m = 4$. Time increases upward. Irrelevant states and layers are not shown. The letter $h$ represents the head of the Turing Machine.}
  \label{fig:merge2}
\end{figure}

\subsection{Proof of correctness}

With $f$ defined as above, we claim that its generic limit set is exactly $X$.
\Cref{thm:MixingRealization} directly follows, since $f|_{\A^\Z} = \sigma|_{\A^\Z}$.
Before that, we prove a few more lemmas about the behavior of segments under $f$.

\begin{lemma}
  \label{lem:EventualMerge}
  Let $S$ and $S'$ be neighboring segments such that $S'$ has higher rank than $S$, and suppose $S$ has entered the merge stage.
  Then eventually either $S$ merges with $S'$, or $S'$ initiates its own merge process.
\end{lemma}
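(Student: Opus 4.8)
The plan is to split on whether $S'$ ever initiates its own merge process. If it does, we land in the second alternative and are finished, so I assume throughout that it does not, and show that then $S$ merges with $S'$. Since $S$ has already reached its merge stage, the segment $S'$ must have completed its computation stage well before, and under the standing assumption it keeps feeding its periodic point $x' = {}^\infty (u')^\infty \in X_{m'}$ through the shared wall onto the Comparison Layer of $S$; here $m' > m$ is the rank of $S'$, and (for $m'$ large enough that \cref{lem:LeastPeriod} applies, the finitely many smaller ranks being governed by the large-radius rules fixed in the construction) the least period of $x'$ is $2^{m'+1}$.

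In the $\Delta^0_2$ case the conclusion is almost immediate. The merge stage captures $u$ from the Main Layer of $S$ and $w$ from the Comparison Layer with no acceptance test, computes the truncation length $d$ and a gluing word $v$ with $u v w_{[0,d-1]} \in \lang(X_{d,m})$ (which exists because each $X_{d,m}$ is mixing with distance $n_m$), and then rewrites the conveyor belt and retracts the Computation Layer by the common procedure. By the timing estimate in the construction these operations finish before the marked $\$$-cells leave the segment, so the merge completes and $S$ merges with $S'$.

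The real work, and the step I expect to be the main obstacle, is the $\Pi^0_1 \subset \Pi^0_2$ case, where the merge proceeds only once $\M$ captures a word $w \in \lang(Y_m)$. I would run the periodicity-and-coprimality argument of \cref{lem:EventualComparison}. The machine waits for a clock signal before each attempt; by \cref{lem:BinaryCounter} these recur with period $3^{2^m}$, and since one attempt (capture, the test $w \in \lang(Y_m)$, and erasure) costs $m^{o(m)} \ll 3^{2^m}$ steps, $\M$ catches every signal. The length-$n_m$ word grabbed from the Comparison Layer at a signal occurring at time $t$ is the factor of $x'$ at the position determined by $t \bmod 2^{m'+1}$, and since $\gcd(3^{2^m}, 2^{m'+1}) = 1$ these positions exhaust every residue class modulo $2^{m'+1}$. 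Hence every length-$n_m$ factor of $x'$ is captured at some attempt. By construction $x'$ contains an occurrence of a word $v_{m'} \in \lang(Y_{m'})$ of length $n_{m'} \geq n_m$, and any length-$n_m$ factor of that occurrence lies in $\lang(Y_{m'}) \subseteq \lang(Y_m)$, the inclusion coming from $Y_{m'} \subseteq Y_m$ in \cref{it:desc} of \cref{lem:Pi02SFTApprox}. Thus $\M$ eventually captures an acceptable $w$, computes a gluing word $v$ with $u v w \in \lang(X_m)$ (possible since $w \in \lang(Y_m) \subseteq \lang(X_m)$ and $n_m$ is a mixing distance for $X_m$), and completes the merge by the common procedure.

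The two points demanding care are the clock-signal bookkeeping—verifying that the set of attempt times really meets all residues modulo $2^{m'+1}$, which rests on $m^{o(m)} \ll 3^{2^m}$ and on short segments being handled by large-radius rules—and the guarantee that $x'$ is the genuine period-$2^{m'+1}$ point of \cref{lem:LeastPeriod} carrying a $\lang(Y_{m'})$-factor rather than a constant filler, which is exactly where the hypothesis $m' > m$ (forcing $\ell'$ large) enters. Both are settled by conventions already fixed in the construction, so the argument reduces to the coprimality covering statement above.
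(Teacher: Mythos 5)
Your proposal is correct and follows essentially the same route as the paper's proof: the $\Delta^0_2$ case is immediate since there is no acceptance test, and in the $\Pi^0_1 \subset \Pi^0_2$ case one reuses the coprimality argument of \cref{lem:EventualComparison} to see that $\M$ captures every length-$n_m$ factor of $x'$, at least one of which lies inside the stored occurrence of a $\lang(Y_{m'})$-word and hence belongs to $\lang(Y_m)$ by the nesting $Y_{m'} \subset Y_m$ from \cref{lem:Pi02SFTApprox}. Your added bookkeeping about clock-signal timing and short segments is consistent with the conventions the paper fixes in the construction.
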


Note that, \textit{a priori}, $S'$ might not have another segment on its right when it initiates the merge process.

\begin{proof}
  In the $\Delta^0_2$ case this is clear: once $S$ enters the merge stage, it will capture $u$ and $w$, compute the number $d$ and the associated merge glue word $v$, and merge the segments.

  Consider then the $\Pi^0_1 \subset \Pi^0_2$ case.
  Let $m < m'$ be the ranks of $S$ and $S'$, and let $u \in \lang_{2^{m+1}}(X_m)$ and $u' \in \lang_{2^{m'+1}}(x_{m'})$ be the words stored on their conveyor belts.
  By construction, $u'$ has a length-$n_{m'}$ subword $w = w_{m'} \in \lang(Y_{m'}) \subset \lang(Y_m)$.
  As in the proof of \cref{lem:EventualComparison}, the simulated machine $\M$ in $S$ repeatedly captures all of the subwords $\{ x'_{[n, n+n_m-1]} \mid n \in \Z \}$ of length $n_m$ in some order.
  Since $n_{m'} \geq n_m$, at least one of these words is a subword of $w$.
  Thus it occurs in $Y_m$ and causes $M$ to initiate the merge process, erasing the conveyor belt of $S$ and the wall between $S$ and $S'$, and rewriting its Main Layer as described above, unless $S'$ initiates its own merge process.
\end{proof}

\begin{lemma}
  \label{lem:MergeIsCorrect}
  Suppose a rank-$m$ segment $S$ has entered the merge stage.
  Then directly on its right there is another properly formatted segment of rank strictly above $m$.
\end{lemma}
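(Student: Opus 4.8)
The plan is to argue by contradiction: suppose that at the instant $S$ enters the merge stage the segment $S'$ immediately to the right of $S$ has rank at most $m$, and deduce that $S$ can then have recorded at most $m-1$ merge candidates, contradicting the fact that entering the merge stage requires detecting $m$ of them. The first step is to understand $S'$ throughout the probe stage of $S$. The wall separating $S$ from $S'$ disappears only when $S$ merges rightward into $S'$, which has not yet happened, so $S'$ is a single properly formatted segment with a fixed left border throughout, and in particular it is still present and properly formatted at the instant in question. Moreover the rank of $S'$ is non-decreasing in time: once $S$ has left its waiting stage, $S'$ has completed its computation stage and its conveyor belt is fixed, and the only later event that alters $S'$'s belt is $S'$ itself merging rightward into a neighbour of strictly higher rank, which can only raise $\mathrm{rank}(S')$. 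Consequently, if the final rank of $S'$ is at most $m$, then $\mathrm{rank}(S') \le m$ at every moment of the probe stage.

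Next I would describe the content that $S$ reads across the wall. By \cref{lem:BinaryCounter}, a clock signal reaches the right end of $S$ once every $3^{2^m}$ steps, and at each such signal $\M$ captures a length-$2^{m+2}$ window of $S'$'s content and records a merge candidate exactly when that window has no period $q \le 2^{m+1}$. While $S'$ emits a single periodic word, its period is $2^{\mathrm{rank}(S')+1} \le 2^{m+1}$, so every captured window is periodic and no candidate is recorded; a non-periodic window can arise only near a seam, the short region joining two periodic blocks of different periods that $S'$ produced at two distinct ranks. Each seam travels left at speed $1$ and hence crosses the fixed reading position of $S$ exactly once. Because the content positions sampled at consecutive clock signals are $3^{2^m} \gg 2^{m+2}$ apart, the window-start positions for which a fixed seam lies inside the captured window form a single interval of length $O(2^{m+2})$, so at most one capture can straddle that seam, while every other capture lies entirely inside one periodic block of period $\le 2^{m+1}$. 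Thus each seam accounts for at most one merge candidate.

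Finally I would count the seams. Since $\mathrm{rank}(S')$ is non-decreasing and confined to $\{1, \dots, m\}$ (segments of smaller rank being handled separately by special local rules), $S'$ passes through at most $m$ distinct ranks; at each fixed rank it generates one contiguous periodic block flowing leftward, so at any time $S'$'s region contains at most $m$ periodic blocks, separated by at most $m-1$ seams. By the previous paragraph, $S$ therefore records at most $m-1$ merge candidates during the probe stage, contradicting the fact that it entered the merge stage only after detecting $m$ of them. Hence the final rank of $S'$ exceeds $m$, and by monotonicity $\mathrm{rank}(S') > m$ already holds when $S$ enters the merge stage, which is the claim. I expect the main obstacle to be the bookkeeping of the last two paragraphs: one must verify that the only non-periodic captured windows are the straddling ones (so clean blocks never contribute), that the large clock gap $3^{2^m}$ genuinely prevents a single seam from being counted twice—the quantitative content of the informal ``period breaker'' discussion preceding \cref{lem:EventualComparison}—and that the number of periodic blocks simultaneously present in $S'$ is truly bounded by the number of distinct ranks $S'$ has held.
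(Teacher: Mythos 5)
Your overall strategy---bounding the number of merge candidates $S$ can detect by the number of ``seams'' (merge gluing words) in the data stream flowing leftward out of $S'$, and using the clock spacing $3^{2^m} \gg 2^{m+2}$ to ensure each seam is counted at most once---is the same counting argument as the paper's, read in contrapositive. But as written the argument is circular. Twice you invoke, as a known fact about $S'$, that it ``merges rightward into a neighbour of strictly higher rank'': once to conclude that $\mathrm{rank}(S')$ is non-decreasing, and again (implicitly) to pass from ``$S'$ takes at most $m$ distinct rank values'' to ``$S'$ undergoes at most $m-1$ merges, hence produces at most $m-1$ seams.'' That fact is precisely the statement of the lemma applied to $S'$ at earlier time steps. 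Without it, a merge could in principle leave the rank unchanged (or lower it), in which case ``at most $m$ distinct ranks'' does not bound the number of merges, the seam count fails, and the contradiction evaporates. The paper resolves this by proving the lemma by induction on the time step at which the merge stage is entered, so that the strict-increase property is available for every earlier merge; your proof needs the same scaffolding, and indeed you flag exactly this point as an ``obstacle'' at the end without closing it.

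A secondary gap: the negation of the conclusion is not only ``the segment to the right of $S$ has rank at most $m$'' but also ``there is no properly formatted segment directly to the right of $S$ at all,'' and your contradiction setup only covers the former. The paper derives the existence of such a segment from the merge candidates themselves (a word of length $2^{m+2}$ with no period $\leq 2^{m+1}$ can only originate from another segment's conveyor belt, and formatting time grows with segment length), again leaning on the induction hypothesis. Once the induction is set up and this case is handled, your argument becomes essentially the paper's proof.
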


\begin{proof}
  The proof is identical for the two cases.
  It is enough to prove the result in the case where $S$ has just entered the merge stage, since then we know that the rank of the segment directly to the right of $S$ can only increase with time.
  
  We proceed by induction on the time step $t \in \N$ on which $S$ enters the merge stage.
  When $S$ enters the merge stage, the simulated machine $\M$ in it has detected $m$ merge candidates, which are captured words in $\A^{2^{m+2}}$ that are not $q$-periodic for any $q \leq 2^{m+1}$.
  Since $S$ is a segment produced by the walls-and-counters construction, these words must originate from the conveyor belts of other segments.
  Thus, on some time step $t' < t$, there was another segment directly to the right of $S$.
  By the induction hypothesis, before time $t$ that segment could only merge with properly formatted segments of higher rank.
  Since the number of time steps it takes to format a segment grows with its length, this implies that at time $t$ there is also a segment $S'$ -- possibly not properly formatted -- directly to the right of $S$.
  Let $m'$ be its rank.
  
  A merge candidate captured by $S$ cannot be a subword of any periodic point stored in a conveyor belt of length at most $2^{m+1}$.
  Thus, either it originates from a subword of a periodic point stored on a longer conveyor belt, or at least one of its symbols originates from a merge glue word.
  If the former condition holds for even one of the merge candidates, then the rank of the segment $S''$ directly to the right of $S$ at some time step $t' < t$ was strictly above $m$.
  Since either $S' = S''$ or $S'$ is produced by $S''$ merging with some segments to its right, which have even higher ranks by the induction hypothesis, we have $m' > m$.

  Suppose then that all $m$ merge candidates contain symbols that originate from merge glue words.
  If any of these merge glue words is the result of a merge where one of the segments had rank above $m$, then $m' > m$ for the same reason as above.
  Suppose then that all of them result from merges between segments with rank at most $m$.
  Then the merge glue words have length at most $n_m = o(\log m)$.
  Since $\M$ starts the capture process only at clock signals, which occur every $3^{2^m}$ time steps, and $n_m < 2^{m+2} \ll 3^{2^m}$, two merge candidates cannot contain symbols originating from the same merge glue word -- at least as long as $m$ is big enough, and we can as always suppose that small rank cases are handled separately.
  Since a merge glue word appears uniquely with the merging of two segments, the segment $S'$ is the result of at least $m+1$ segments eventually merging into one.
  Since the lowest possible rank is $1$, by the induction hypothesis we have $m' > m$.
\end{proof}

\begin{lemma}
  \label{lem:LanguageConverges}
  For each $k \in \N$ there exists $m_k \in \N$ such that the following holds.
  Let $x \in \B^\Z$ be a configuration, $t \geq 0$, and $[i, j] \subset \Z$ an interval such that $S = f^t(x)_{[i,j]}$ is a segment of rank $m \geq m_k$ that has just entered the merge stage.
  Then for all $t' \geq t$ we have $f^{t'}(x)_{[j+1,j+k]} \in \lang(X)$.
\end{lemma}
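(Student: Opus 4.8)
The plan is to pick $m_k$ so large that every length-$k$ word occurring in a sufficiently high approximation $X_m$ already lies in $\lang(X)$, and then to show that the cells $[j+1,j+k]$ forever display $\A$-content assembled from such approximations. For the choice of $m_k$: since $\A^k$ is finite, \cref{it:capcup} of \cref{lem:Pi02SFTApprox} (in the $\Pi^0_1 \subset \Pi^0_2$ case) resp.\ \cref{it:capcupcap2} of \cref{lem:Delta02SFTApprox} (in the $\Delta^0_2$ case) yields an $M_k$ with $\lang_k(X_m) \subseteq \lang(X)$ for all $m \geq M_k$, because each of the finitely many words of $\A^k \setminus \lang(X)$ belongs to only finitely many $\lang(X_m)$; in the $\Delta^0_2$ case one even has $\lang_k(X_m) = \lang_k(X)$ for $m \geq M_k$. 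Using that $n_m$ is nondecreasing with $n_m \to \infty$, I would then enlarge $M_k$ to an $m_k$ also satisfying $n_m \geq k$ and $2^m \geq k$ for all $m \geq m_k$.

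I would treat the base case $t' = t$ first. By \cref{lem:MergeIsCorrect} there is a properly formatted segment $S'$ of rank $m' > m \geq m_k$ immediately to the right of $S$, sharing the wall at position $j$; its $\A_1$-region occupies the leftmost $\ell'-2^{m'} > 2^{m'} \geq k$ cells and carries no wall, no simulated head and no nonblank Computation Layer. The key point is a timing argument: $S$ can enter the merge stage only after detecting merge candidates, that is, after reading words that are not short-periodic on its Comparison Layer, and such words reach $S$ only through the wall at $j$, hence only after the belt content of $S'$ has propagated to its leftmost cell — precisely when the whole $\A_1$-region of $S'$ (the window included, since it fills last) holds the stored periodic point $x' = {}^\infty (u')^\infty \in X_{m'}$, with no $\$$-symbol remaining. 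Thus $f^t(x)_{[j+1,j+k]}$ is a length-$k$ factor of $x'$, so it lies in $\lang_k(X_{m'}) \subseteq \lang(X)$.

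For $t' > t$ I would maintain the invariant that the window stays inside the $\A_1$-region of a segment of rank $> m$ and always shows an $\A$-word that is a length-$k$ factor either of a stored periodic point $x'' \in X_{m''}$ with $m'' > m$, or of a junction word produced by the merge of two segments both of rank $> m$. No wall, head or $\$$-symbol can enter the window: to the right of $j$ segments change only by merges, which by \cref{lem:MergeIsCorrect} strictly increase rank, so the window's segment keeps rank $> m$, its $\A_1$-region (of length $> 2^{m''} \geq k$) strictly contains the window, heads and decorated walls remain in the far-right computation region, and both the belt content and the rewritten Main Layer are over $\A$. While a merge junction propagates leftward through the window we see factors of a junction word; otherwise we see factors of $x''$.

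The technical heart, and the step I expect to be the main obstacle, is showing that junction words lie in $\lang(X)$. A merge of a rank-$r_1$ segment into a rank-$r_2$ one (with $r_1,r_2 > m \geq m_k$) prints on the Main Layer a word $P u v w Q$ with $Pu \sqsubset x_{r_1} \in X_{r_1}$, $wQ \sqsubset x_{r_2} \in X_{r_2}$, $|u|=|v|=|w| = n_{r_1} \geq k$, and $v$ the gluing word. In the $\Pi^0_1 \subset \Pi^0_2$ case $uvw \in \lang(X_{r_1})$, so gluing $Pu$ with $uvw$ along the full window $u$ gives $Puvw \in \lang(X_{r_1})$; as $k \leq |w|$, every length-$k$ factor of $PuvwQ$ lies in $Puvw$ or in $wQ$, hence in $\lang_k(X_{r_1}) \cup \lang_k(X_{r_2}) \subseteq \lang(X)$. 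In the $\Delta^0_2$ case the glue only gives $u v w_{[0,d-1]} \in \lang(\mathcal{S}_d(X_{r_1}))$ for the maximal admissible $d$; here I would observe that $d \geq k$, since $w_{[0,k-1]} \in \lang_k(X_{r_2}) = \lang_k(X) = \lang_k(X_{r_1})$ (all equal for $r_1,r_2 \geq M_k$) occurs in $\mathcal{S}_k(X_{r_1})$, making $d = k$ admissible. With $d \geq k$ the same window split applies, every length-$k$ factor lying in $\lang_k(X_{r_1}) = \lang_k(X)$ or in $\lang_k(X_{r_2}) = \lang_k(X)$. The delicate points are thus the $d \geq k$ observation in the $\Delta^0_2$ case and, throughout, the coupling between merge-candidate detection and the complete filling of the neighbouring $\A_1$-region that rules out exposed $\$$-symbols.
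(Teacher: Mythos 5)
Your overall strategy --- choosing $m_k$ via the finiteness of $\A^k$ and the limit characterizations of $\lang(X_m)$, invoking \cref{lem:MergeIsCorrect} to keep a rank-$>m$ segment to the right of the wall, and splitting the window content into belt content versus junction words --- matches the paper's. But there are two genuine gaps, both stemming from the same oversight.

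The main gap is the unjustified claim that everything crossing $[j+1,j+k]$ at times $\geq t$ originates from segments of rank $>m$. Your base-case ``timing argument'' asserts that when $S$ enters the merge stage, the whole $\A_1$-region of $S'$ already holds the stored periodic point $x'\in X_{m'}$; this contradicts the construction's own ``false positive'' analysis. The segment $S'$ is typically itself the product of earlier merges, and at time $t$ its $\A$-part can still contain, in leftward transit, junction words $uvw$ (and belt content) produced by merges of constituents of rank $r\le m$, possibly $r=1$; these reach the window only at times $>t$, and $\lang_k(X_r)$ need not be contained in $\lang(X)$ for small $r$. The same unproved assertion reappears in your invariant (``junction words produced by the merge of two segments both of rank $>m$''). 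The paper closes exactly this hole with the merge-candidate count: the $m$-th merge candidate, whose detection triggers the merge stage, already originates from rank-$\ge m$ material (this is extracted from the proof of \cref{lem:MergeIsCorrect}), and since the originating ranks along the leftward stream are non-decreasing, all material reaching the window after time $t$ originates from rank $\ge m\ge m_k$. Your proof has no substitute for this step --- which is the very reason the construction demands $m$ merge candidates rather than one.

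The second gap is in the $\Delta^0_2$ case: you justify $d\ge k$ by asserting $w_{[0,k-1]}\in\lang_k(X_{r_2})$, i.e.\ that the captured word is a factor of the right segment's stored periodic point. For the reason above this can fail: $w$ is read off the Comparison Layer, which mirrors the right segment's $\A$-part and may contain junction symbols from that segment's own merge history. The paper instead runs an induction on $t'$ and applies the induction hypothesis to the capturing segment at the earlier time it entered its own merge stage, concluding $w_{[0,k-1]}\in\lang_k(X)=\lang_k(X_\ell)$; your argument is missing this self-referential step. The remaining ingredients (the choice of $m_k$, the gluing $Puvw\in\lang(X_{r_1})$ in the $\Pi^0_1\subset\Pi^0_2$ case, and the window-splitting across $uvw$) are correct and agree with the paper.
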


\begin{proof}
  Recall that $n_m = o(\log m)$ is a window size for $X_m$ (and $Y_m$ in the case where it exists), and a mixing distance for $X_m$.
  There exists $m_k \geq 0$ such that $\log_2 m_k > k$, $n_{m_k} \geq k$ and for all $\ell \geq m_k$, the SFT $X_\ell$ produced by \cref{lem:Pi02SFTApprox} satisfies $\lang_k(X_\ell) \subset \lang_k(X)$, and the one produced by \cref{lem:Delta02SFTApprox} satisfies $\lang_k(X_\ell) = \lang_k(X)$.
        
  Fix $k \geq 0$.
  We prove by induction on $t'$ that $m_k$ has the required properties.
  Choose $m \geq m_k$, $x$, $t$ and $[i,j]$ as in the claim.
  \Cref{lem:MergeIsCorrect} implies that in $f^t(x)$, there is a segment $S'$ of rank $m' > m$ directly to the right of $S$.
  Since $\log_2 m_k > k$, this segment's $\A$-part contains the interval $[j+1,j+k]$.
  Applying the same lemma repeatedly (whenever the segment containing cell $j+1$ merges with another one), we see that for all $t'' \geq t$, there is a segment of rank at least $m' > m_k$ in $f^{t''}(x)$ whose $\A$-part contains $[j+1,j+k]$.
  
  Now, the word $r = f^{t'}(x)_{[j+1,j+k]}$ either originates from the conveyor belt of a segment of some rank $\ell$, or at least one of its symbols originates from a merge gluing word $v \in \A^{n_\ell}$, computed as part of $u v w \in \A^{3 n_\ell}$ during the merging of two segments of some ranks $\ell < \ell'$ (see \cref{subsec:merge}).
  The same holds for each of the $m$ merge candidates captured by $S$ before time $t$.
  By \cref{lem:MergeIsCorrect}, the last merge candidate originates from a segment of rank at least $m$, so we have $\ell \geq m \geq m_k$.

  There are a few cases to consider.
  \begin{itemize}
  \item
    The word $r$ originates from a conveyor belt.
    Then we have $r \in \lang_k(X_\ell) \subset \lang_k(X)$ by our choice of $m_k$.
  \item
    Some symbol of $r$ originates from a merge gluing word and we are in the $\Pi^0_1 \subset \Pi^0_2$ case.
    Then $u v w \in \lang(Y_\ell)$, hence $r \in \lang_k(Y_\ell) \subset \lang_k(X_\ell) \subset \lang_k(X)$ by our choice of $m_k$.
  \item
    Some symbol of $r$ originates from a merge gluing word and we are in the $\Delta^0_2$ case.
    Then $u v w_{[0, d-1]} \in \lang(\mathcal{S}_d(X_\ell))$ for the largest $0 \leq d \leq n_\ell$ with $w_{[0,d-1]} \in \lang(X_\ell)$.
    The word $w_{[0,k-1]}$ was captured by the segment of rank $\ell$ before time step $t'$.
    By the induction hypothesis applied to that segment and our choice of $m_k$, we have $n_\ell \geq n_{m_k} \geq k$ and $w_{[0,k-1]} \in \lang_k(X) = \lang_k(X_\ell)$.
    It follows that $d \geq k$ since $d$ is the largest possible integer so that $w_{[0,d-1]} \in \lang(X_\ell)$ holds, and then $r \in \lang_k(\mathcal{S}_d(X_\ell)) = \lang_k(X_\ell) = \lang_k(X)$.
  \end{itemize}
  In all cases we have $r \in \lang(X)$.
\end{proof}

\begin{proof}[Proof of \cref{thm:MixingRealization}]
  We claim that the CA $f$ constructed in this section satisfies $\gls(f) = X$.
  The theorem follows from this, since $f|_{\A^\Z} = \sigma|_{\A^\Z}$ by construction, and $X \subset \A^\Z$.
  
  Take any word $s \in \lang(X)$.
  It occurs infinitely many times as $w_m$ in the sequence of triples $(X_m, Y_m, w_m)$ given by \cref{lem:Pi02SFTApprox}, or in the sequence of pairs $(X_m, w_m)$ given by \cref{lem:Delta02SFTApprox}.
  Thus, in both cases of the theorem there are infinitely many different numbers $\ell$ such that a segment of length $\ell$ produced by the walls-and-counters part of $f$ stores on its conveyor belt a periodic point that contains an occurrence of $s$.

  We claim that the empty word enables $s$ in the sense of \cref{lemma2}.
  Take any cylinder set $[v]_i \subset \B^\Z$, where we may assume $|v| \geq 2 |s|$ and $-|v| < i \leq 0$.
  Choose a large integer $k \in \N$ and consider the configuration $x = {}^\infty \$ \I . v \I \$^k \I \$^\infty \in [v]_i$, where the dot denotes coordinate $i$.
  If $k$ is large enough, the Cleaning Layer guarantees that $f^k(x)_{[i-1, i+|v|]}$ is a sequence of segments separated by walls, and \cref{lem:EventualComparison} and \cref{lem:EventualMerge} guarantee that the length-$k$ segment on their right -- the one that starts as $\I\$^k\I$ -- will eventually merge with them all.
  This means that for large enough $t$, the word $f^t(x)_{[i-1, i+|v|+k+1]}$ is a single segment, and $f^t(x)_{[0, |s|-1]} \in \A^*$ lies in its $\A$-part. Due to the previous paragraph, we can find infinitely many $k$'s -- and consequently infinitely many $x$'s in $[v]_i$ -- so that the conveyor belt of the length-$k$ segment contains an occurrence of $s$.
  This shows that the empty word enables $s$, hence $s \in \lang(\gls(f))$.
  
  Conversely, let $s \in \lang(\gls(f))$ be arbitrary.
  By \cref{lemma2}, some cylinder set $[v]_i \subset \B^\Z$ enables it.
  We may again assume that $|v| \geq 2 |s|$ and $-|s| < i \leq 0$.
  Let $m = m_{|s|}$ be given by \cref{lem:LanguageConverges} for $k = |s|$, denote $\ell = 2^{m+1}+1$ and consider the words $u = \$^n \I \$^m \I$ and $w = \I \$^{4(\ell + |v|)} \I \$^n$ for some large $n \in \N$.
  Since $v$ enables $s$, we have that for infinitely many $t \in \N$, the cylinder set $C = [u v w]_{i-n-\ell-2}$ intersects $f^{-t}([s])$.
  
  For all $x \in C$, the word $x_{[i-\ell-2, i+4(\ell + |v|)+1]}$ is a sequence of segments, the leftmost of which has rank $m$ and the rightmost of which has the highest rank.
  \Cref{lem:EventualComparison} and \cref{lem:EventualMerge} guarantee that as long as $n$ is large enough (so that the left- and rightmost segments have no time to merge with any other segments), all these segments will eventually merge into one.
  Suppose this happens at time $t$.
  Let $t' < t$ be the time step at which a rank-$m$ segment whose right wall is at coordinate $i$ enters the merge stage.
  By our choice of $m = m_{|s|}$, we then have $f^{t''}(x)_{[i+1,i+k]} \in \lang_k(X)$ for all $t'' \geq t'$ and $x \in C$.
  There exist $t''' \geq t+i+1$ and $x \in C$ with $f^{t'''}(x)_{[0,k-1]} = s$ due to $v$ enabling $s$. Then that word $s$ has been shifted to the left during the $i+1$ previous time steps, and combining this with $f^{t'''-i-1}(x)_{[i+1,i+k]} \in \lang_k(X)$, we obtain that $f^{t'''-i-1}(x)_{[i+1,i+k]} = s \in \lang_k(X)$. Therefore $s \in \lang(X)$, which concludes the proof.
\end{proof}

\subsection{Corollaries}

In the situation of \cref{thm:MixingRealization}, \cref{cor:ShiftGivesMinimal} implies that $\gls(f)$ is inclusion-minimal.
In particular, as there exist chain mixing subshifts with $\Pi^0_2$-complete languages (for example, the subshift $X \subset \{0,1,2\}^\Z$ defined by forbidding $2 w 2$ for each $w \in \{0,1\}^* \setminus L$, where $L \subset \{0,1\}^*$ is $\Pi^0_2$-complete), such subshifts can be built through \cref{thm:MixingRealization}. Consequently, the complexity bound of \cref{pi2b} is optimal.

\begin{corollary}
	\label{cor:InclusionMinimalOptimalBound}
	There exists a CA $f$ with $\gls(f)$ an inclusion-minimal GLS, such that $\lang(\gls(f))$ is a $\Pi^0_2$-complete set.
\end{corollary}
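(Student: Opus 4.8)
The plan is to exhibit a single subshift $X$ that meets the hypotheses of the first case of \cref{thm:MixingRealization} and whose language is $\Pi^0_2$-complete, and then to invoke \cref{cor:ShiftGivesMinimal}. Concretely, I would fix a $\Pi^0_2$-complete set $L \subset \{0,1\}^*$ and take $X \subset \{0,1,2\}^\Z$ to be the subshift defined by forbidding $2w2$ for every $w \in \{0,1\}^* \setminus L$, exactly as suggested before the statement. The three properties of $X$ to verify are: it is $\Pi^0_2$ with a $\Pi^0_2$-complete language; it is chain mixing; and it contains a nonempty $\Pi^0_1$ subshift. Once these hold, \cref{thm:MixingRealization} produces a CA $f$ with $\gls(f) = X$ and $f|_X = \sigma|_X$; since $X$ contains the full shift $\{0,1\}^\Z$ it is infinite and $\sigma|_X$ is a nontrivial shift, so \cref{cor:ShiftGivesMinimal} makes $\gls(f)$ inclusion-minimal. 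As $\lang(\gls(f)) = \lang(X)$ is $\Pi^0_2$-complete, this would prove the corollary (and, with \cref{pi2b}, the optimality of the bound).

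For the language complexity, the first step is the observation that for this particular $X$ a finite word lies in $\lang(X)$ if and only if it contains no forbidden factor: any factor-avoiding word $v$ extends to the configuration ${}^\infty 0\, v\, 0^\infty$, whose only bounded $\{0,1\}$-blocks are the internal blocks of $v$, while the boundary blocks become infinite and hence unconstrained. Thus $v \in \lang(X)$ if and only if for every factorization $v = a\,2w2\,b$ with $w \in \{0,1\}^*$ we have $w \in L$; this is a bounded conjunction of $\Pi^0_2$ conditions, so $\lang(X)$ is $\Pi^0_2$. For hardness I would use the computable map $w \mapsto 2w2$: one checks $2w2 \in \lang(X)$ if and only if $w \in L$ (the only candidate forbidden factor inside $2w2$ is $2w2$ itself, and ${}^\infty 0\,2w2\,0^\infty$ witnesses membership when $w \in L$), so the $\Pi^0_2$-completeness of $L$ transfers to $\lang(X)$.

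The remaining point---and the one most worth spelling out, since the undecidability of $L$ might appear to obstruct it---is chain mixing, i.e.\ that every SFT approximation $\mathcal{S}_n(X)$ is mixing. Here the key is that $\mathcal{S}_n(X)$ only constrains $\{0,1\}$-blocks of length at most $n-2$, so boundary blocks can be neutralized by padding. Given allowed words $u, v$ of $\mathcal{S}_n(X)$ and any length $g \geq 2n+1$, I would glue them by $0^a\,2\,0^b$ with $a,b \geq n$ and $a+b+1 = g$: the $\{0,1\}$-block immediately left of the inserted $2$ is the suffix block of $u$ followed by $0^a$, hence of length $> n-2$ and unconstrained, and symmetrically on the right, so $u\,0^a 2 0^b\,v$ is again allowed. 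This yields mixing of $\mathcal{S}_n(X)$ for every $n$, hence chain mixing of $X$. Finally, the full shift $\{0,1\}^\Z$ contains no $2$, so it avoids every forbidden word and sits inside $X$ as a nonempty $\Pi^0_1$ (indeed SFT) subshift, completing the verification of the hypotheses of \cref{thm:MixingRealization}. I expect this chain-mixing step to be the only genuinely delicate part; everything else is routine once the extendability characterization of $\lang(X)$ is in hand.
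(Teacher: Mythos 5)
Your proposal is correct and follows exactly the paper's own route: the paper establishes this corollary in the paragraph preceding it, using the very same subshift $X \subset \{0,1,2\}^\Z$ (forbidding $2w2$ for $w \notin L$ with $L$ a $\Pi^0_2$-complete set), realized via \cref{thm:MixingRealization} and made inclusion-minimal via \cref{cor:ShiftGivesMinimal}. The only difference is that you spell out the verifications (the extendability characterization of $\lang(X)$, the $0$-padding argument for chain mixing, and $\{0,1\}^\Z$ as the required $\Pi^0_1$ subshift) that the paper leaves implicit, and these are all correct.
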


We can also use \cref{thm:MixingRealization} to characterize generic limit sets among several classes of subshifts.

\begin{corollary}
  \label{cor:ChainTransCharacterization}
  Let $X \subset \A^\Z$ be a one-dimensional chain transitive subshift that is either $\Pi^0_2$ and contains a nonempty $\Pi^0_1$ subshift, or is $\Delta^0_2$.
  Then $X$ is a generic limit set of some CA if and only if it is chain mixing.
\end{corollary}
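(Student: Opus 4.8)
The plan is to prove the two implications separately; in both directions the substantive work has already been carried out in earlier sections, so the proof amounts to assembling existing results.

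For the ``if'' direction, suppose $X$ is chain mixing. Then $X$ meets all the hypotheses of \cref{thm:MixingRealization}: it is chain mixing and, by assumption, either $\Pi^0_2$ containing a nonempty $\Pi^0_1$ subshift, or $\Delta^0_2$. Applying that theorem directly produces an alphabet $\B \supset \A$ and a CA $f\colon \B^\Z \to \B^\Z$ with $\gls(f) = X$, so $X$ is realized as a generic limit set. No further argument is needed here.

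For the ``only if'' direction, I would argue by contraposition, showing that a chain transitive subshift that fails to be chain mixing cannot be a generic limit set of any CA. So assume $X$ is chain transitive but not chain mixing. By \cref{lem:Akin}, there is a factor map $\pi\colon (X,\sigma) \to (F,S)$ onto a finite set $F$ with $|F| \geq 2$ on which $S$ acts as a cyclic permutation. Since a cyclic permutation of a set with at least two elements is fixed-point-free, $F$ is a finite factor of $X$ that does not consist of fixed points. Then \cref{coro:chaintrans} applies and yields that $X$ is not the generic limit set of any CA, which is precisely the contrapositive of the desired claim.

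There is no genuinely hard step in this corollary: it is a direct combination of \cref{thm:MixingRealization} (for sufficiency) with \cref{lem:Akin} and \cref{coro:chaintrans} (for necessity). The only point worth checking explicitly is that the cyclic permutation on $|F| \geq 2$ elements has no fixed points, so that the hypothesis of \cref{coro:chaintrans} is satisfied; this is immediate. I would also remark that the complexity assumptions ($\Pi^0_2$ with a nonempty $\Pi^0_1$ subshift, or $\Delta^0_2$) are used only for the ``if'' direction, as \cref{coro:chaintrans} requires nothing beyond chain transitivity.
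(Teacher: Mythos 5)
Your proof is correct and follows exactly the same route as the paper: \cref{thm:MixingRealization} for sufficiency, and \cref{lem:Akin} combined with \cref{coro:chaintrans} for necessity. Your explicit check that a cyclic permutation on at least two elements is fixed-point-free is a small useful clarification the paper leaves implicit.
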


\begin{proof}
  If $X$ is chain mixing, then \cref{thm:MixingRealization} implies that it can be realized as a generic limit set.
  Otherwise, \cref{lem:Akin} and \cref{coro:chaintrans} show that $X$ is not a generic limit set.
\end{proof}

For one-dimensional SFTs (which all have computable languages), chain transitivity coincides with transitivity, and chain mixing with mixing.
This gives a simple characterization of generic limit sets among transitive SFTs.
By the results of \cite{Ka08}, chain transitivity and chain mixing of a given sofic shift are decidable in polynomial time, so generic limit sets form a well-behaved subclass of chain transitive sofic shifts as well.

\begin{corollary}
  \label{cor:transitive-SFTs}
  A one-dimensional transitive SFT is a generic limit set of some CA if and only if it is mixing.
\end{corollary}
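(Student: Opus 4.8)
The plan is to obtain this corollary as an immediate specialization of the more general \cref{cor:ChainTransCharacterization}, exploiting the two features that distinguish SFTs from arbitrary subshifts: their languages are decidable, and for them the combinatorial notions of transitivity and mixing coincide with their chain analogues. Concretely, I would argue that a transitive SFT automatically meets the $\Delta^0_2$ hypothesis of \cref{cor:ChainTransCharacterization}, so that the equivalence ``generic limit set iff chain mixing'' provided there translates directly into ``generic limit set iff mixing''.

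First I would fix a transitive SFT $X$. Since every generic limit set is a nonempty subshift by \cref{lemma1}, and since the degenerate empty case is not a genuine transitive SFT under the usual convention (and is certainly not realizable as a GLS), I would assume $X$ nonempty and dispose of the empty case in one line. Being an SFT, $X$ has a computable language, hence it is $\Delta^0_1 \subseteq \Delta^0_2$. Moreover, as recalled in the prose preceding the statement, transitivity coincides with chain transitivity for one-dimensional SFTs, so $X$ is chain transitive. Thus $X$ satisfies exactly the hypotheses of the $\Delta^0_2$ branch of \cref{cor:ChainTransCharacterization}.

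Applying that corollary, $X$ is the generic limit set of some CA if and only if it is chain mixing. Invoking once more that chain mixing coincides with mixing for SFTs, I would rewrite this equivalence as: $X$ is a generic limit set if and only if $X$ is mixing, which is the claim. Note that no separate treatment of the two implications is needed, since \cref{cor:ChainTransCharacterization} already packages both the realization (through \cref{thm:MixingRealization}) and the obstruction (through \cref{lem:Akin} and \cref{coro:chaintrans}).

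The only nontrivial ingredient, and hence the point I would be most careful to state cleanly rather than re-prove, is the coincidence between transitivity and chain transitivity (and between mixing and chain mixing) for SFTs. This rests on the characterization \cite[Corollary~4]{Ka08}, under which chain transitivity (resp. chain mixing) is transitivity (resp. mixing) restricted to pairs of words of equal length, together with the elementary fact that for an SFT of window size $n$ the SFT approximations $\mathcal{S}_k(X)$ stabilize to $X$ once $k \geq n$. Everything else in the argument is a direct substitution of ``SFT'' data into \cref{cor:ChainTransCharacterization}, so no further computation is required.
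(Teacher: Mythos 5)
Your proposal is correct and follows essentially the same route as the paper, which derives this corollary from \cref{cor:ChainTransCharacterization} by noting that SFTs have computable (hence $\Delta^0_2$) languages and that for one-dimensional SFTs transitivity and mixing coincide with their chain versions. The extra care you take with the empty case and with justifying the coincidence via \cite[Corollary~4]{Ka08} is fine but not needed beyond what the paper already states.
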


Finally, we can completely characterize the generic limit sets among minimal subshifts.

\begin{corollary}
  \label{cor:minimal-characterization}
  A one-dimensional shift-minimal subshift is the generic limit set of a CA if and only if it is chain mixing and $\Delta^0_2$.
\end{corollary}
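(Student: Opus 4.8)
The plan is to deduce the statement by combining results already established in the paper, handling the two implications separately.

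For the backward implication, suppose $X$ is shift-minimal, chain mixing and $\Delta^0_2$. Then $X$ is in particular a chain mixing $\Delta^0_2$ subshift, so case (2) of \cref{thm:MixingRealization} applies verbatim and produces a CA $f$ with $\gls(f) = X$ (and moreover $f|_X = \sigma|_X$). Hence $X$ is the generic limit set of a CA, and this direction needs nothing further.

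For the forward implication, suppose $X = \gls(f)$ for some CA $f$. The $\Delta^0_2$ conclusion is immediate from \cref{cor:minimal-delta02}, since $X$ is shift-minimal. It remains to prove that $X$ is chain mixing. Being shift-minimal, $X$ is chain transitive (the same fact invoked in the proof of \cref{prop:minimal-is-minimal}). Suppose toward a contradiction that $X$ is not chain mixing. Then \cref{lem:Akin} yields a factor map $\pi \colon (X,\sigma) \to (F,S)$ onto a finite set $F$ with at least two elements on which $S$ is a cyclic permutation; in particular $S$ has no fixed points. Thus $X$ is a chain transitive subshift admitting a finite factor that does not consist of fixed points, so by \cref{coro:chaintrans} it cannot be the generic limit set of any CA, contradicting $X = \gls(f)$. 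Therefore $X$ is chain mixing, completing this direction.

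I expect no genuine obstacle here, as the corollary is essentially a repackaging of \cref{thm:MixingRealization}, \cref{cor:minimal-delta02}, \cref{lem:Akin} and \cref{coro:chaintrans}. The only point deserving care is the step ``shift-minimal implies chain transitive'', which is what permits the application of \cref{lem:Akin} and \cref{coro:chaintrans}; this holds because minimality forces uniform recurrence, giving transitivity between words of equal length and hence chain transitivity.
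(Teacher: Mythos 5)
Your proof is correct and follows essentially the same route as the paper: the paper simply packages the forward-direction argument (shift-minimal $\Rightarrow$ chain transitive, then \cref{lem:Akin} plus \cref{coro:chaintrans} rule out non-chain-mixing) and the backward direction (\cref{thm:MixingRealization}) into the intermediate \cref{cor:ChainTransCharacterization}, which you have merely unpacked. No gaps.
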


\begin{proof}
  By \cref{cor:minimal-delta02}, a shift-minimal generic limit set must be $\Delta^0_2$.
  Thus we may restrict our attention to $\Delta^0_2$ minimal subshifts.
  All minimal subshifts are in particular chain transitive.
  The result now follows from \cref{cor:ChainTransCharacterization}.
\end{proof}

%
%

\section{Future work}
\label{sec:future}

In this paper we have obtained several constraints on the structure and complexity of generic limit sets of cellular automata, and on the other hand proved that many of the complexity bounds are optimal.
In \cref{tab:limitsets}, inspired by the table in \cite[Section 7]{boyer2015mu}, we recapitulate the state of the art so far regarding several properties of the three commonly-considered CA attractors: the limit set, the $\mu$-limit set (where $\mu$ is the uniform Bernoulli measure) and the generic limit set.

\begin{table}[htp]
  \caption{Comparison of computability properties of different variants of limit sets.}
  \label{tab:limitsets}
  \begin{center}
    \begin{tabular}{>{\centering\arraybackslash}m{2.5cm}||>{\centering\arraybackslash}m{2.5cm}|>{\centering\arraybackslash}m{2.5cm}|>{\centering\arraybackslash}m{2.5cm}}
      \textbf{Problem or property} & \textbf{Limit set} & \textbf{$\mu$-limit set} & \textbf{Generic limit set}\\
      \hline
      \hline
      \textit{Being a singleton} & $\Sigma^0_1$-complete \cite{kari1992nilpotency} & $\Pi^0_3$-complete \cite[Theorem~5.7]{boyer2015mu} & $\Sigma^0_2$-complete \cite{To21}\\
      \hline
      \textit{Any non-trivial property} & $\Sigma^0_1$-hard \cite{kari} & $\Pi^0_3$-hard \cite[Theorem~5.2]{boyer2015mu} & undecidable \cite{De21}\\
      \hline
      \textit{Worst-case language} & $\Pi^0_1$-complete \cite[Theorem~4]{hurd} & $\Sigma^0_3$-complete \cite[Theorem~4.4]{boyer2015mu} & $\Sigma^0_3$-complete \cite[Theorem~1]{Ilkka}\\
      \hline
      \textit{Worst-case language when $f$ restricts to a shift} & computable (SFT) \cite[Theorem~1]{Ta07} & $\Sigma^0_3$-complete \cite[Theorem~4.4]{boyer2015mu} & $\Pi^0_2$-complete: \cref{cor:ShiftGivesMinimal}, \cref{pi2b} and~\cref{thm:MixingRealization}\\
      \hline
      \textit{Worst-case language when $f$ has equicontinuity points} & $\Pi^0_1$-complete \cite[Theorem 4]{hurd} & $\Sigma^0_1$ \cite[Theorem~4.2]{boyer2015mu} & $\Sigma^0_1$-complete: \cref{s1b} and~\cref{equiident}\\
    \end{tabular}
  \end{center}
\end{table}

The $\Pi^0_1$-completeness results on limit sets come from \cite[Theorem~4]{hurd} and its proof.
Hurd constructs a CA that simulates copies of a Turing machine on disjoint tapes and has a $\Pi^0_1$-complete limit language.
The tapes cannot be extended or destroyed, and no information can pass from one tape to another, so a short tape bordered by two other tapes forms a blocking word.
Hence this CA admits equicontinuity points.
The $\Sigma^0_3$-completeness results on $\mu$-limit sets follow from \cite[Theorem~4.4]{boyer2015mu}, where the authors constuct a CA that has a $\Sigma^0_3$-complete $\mu$-limit set on which it acts as the identity.

In \cref{thm:MixingRealization} we have studied the class of subshifts $X$ for which some CA $f$ satisfies $\gls(f) = X$ and $f|_X = \sigma|_X$.
There remains a gap between the upper bound of all chain mixing $\Pi^0_2$ subshifts, and the two incomparable lower bounds of those that also contain a nonempty $\Pi^0_1$ subshift, and those that are $\Delta^0_2$.
We believe that the class should have a relatively simple characterization, but dare not explicitly conjecture that the upper bound is strict.

\begin{ask}
  Which subshifts occur as the generic limit set of a CA that acts as the shift map on it?
\end{ask}

Of course, the same can be asked about the class of those subshifts $X$ for which some CA $f$ satisfies $\gls(f) = X$ and $f|_X = \mathrm{id}|_X$.
\Cref{equiident} shows that the complexity bound of $\Sigma^0_1$ for this class is optimal, but we did not investigate in details the structural properties of its elements.
From \cite[Prop. 3]{Ilkka} we know that they are at least topologically mixing, and from the proof of that result one can deduce that they satisfy a stronger mixing property.

\begin{ask}
  Which subshifts occur as the generic limit set of a CA that acts as the identity on it?
\end{ask}

\Cref{cor:transitive-SFTs} gives a simple characterization of the transitive SFTs that occur as generic limit sets.
We do not know how it would generalize to the class of all SFTs, or if the realizability of a given SFT or sofic shift as a generic limit set is even a decidable property.

\begin{ask}
  Which one-dimensional SFTs occur as generic limit sets of CA?
\end{ask}

\begin{ask}
  Which one-dimensional sofic shifts occur as generic limit sets of CA?
\end{ask}

\section*{Acknowledgements}

The first two authors would like to thank Mathieu Sablik for his wise, kind and steady help and guidance.

\bibliographystyle{alpha}
\bibliography{biblio}

\end{document}